\documentclass[12pt,oneside,a4paper,reqno]{amsart}

\usepackage{amssymb}
\usepackage{amsmath}
\usepackage{amsthm}
\usepackage{amscd}
\usepackage[all]{xy}
\usepackage{longtable}
\usepackage{mathrsfs}
\usepackage[dvipdfmx]{xcolor}
\usepackage[dvipdfmx]{pict2e}
\usepackage[dvipdfmx]{graphicx}

\usepackage{comment}
\usepackage{todonotes}

\usepackage[dvipdfmx]{hyperref}
\usepackage{hyperref}
\hypersetup{
	colorlinks=true,
	linkcolor=red,
	citecolor=blue}
	\usepackage[utf8]{inputenc}
\usepackage[T1]{fontenc}
\usepackage{color}

\usepackage{extarrows}

\usepackage{tikz}
\usepackage{tikz-cd}
\usetikzlibrary{arrows.meta}

\usepackage{amscd}

\usepackage{geometry}
\theoremstyle{plain}
\newtheorem{theorem}{Theorem}[section]
\newtheorem{lemma}[theorem]{Lemma}
\newtheorem{corollary}[theorem]{Corollary}
\newtheorem{proposition}[theorem]{Proposition}
\newtheorem{remark}[theorem]{Remark}

\theoremstyle{definition}
\newtheorem{definition}[theorem]{Definition}

\newtheorem{conjecture}[theorem]{Conjecture}

\newtheorem{remark-theorem}[theorem]{Remark-Theorem}

\newcommand{\kah}{K\"{a}hler }
\newcommand{\idd}{i\partial\overline{\partial}}
\newcommand{\dbar}{\overline{\partial}}

\newcommand{\cal}[1]{\mathcal{#1}}
\newcommand{\bb}[1]{\mathbb{#1}}
\newcommand{\scr}[1]{\mathscr{#1}}
\newcommand{\rom}[1]{\mathrm{#1}}

\newcommand{\xs}{X_{sing}}
\newcommand{\reg}{X_{reg}}
\newcommand{\ogr}{\omega_X^{GR}}
\newcommand{\tx}{\widetilde{X}}
\newcommand{\dvo}{dV_\omega}

\newcommand{\tl}[1]{\widetilde{#1}}

\newcommand{\exc}{\rom{Exc}}

\newcommand{\ureg}{U_{reg}}

\newcommand{\lara}[2]{\langle{#1},{#2}\rangle}

\newcommand{\iO}[1]{i\Theta_{#1}}

\newcommand{\oid}[1]{\otimes\mathrm{id}_{#1}}

\subjclass[2020]{32S20, 14F18, 32L10, 32L20, 32J25, 32C15}
\keywords{ 
$L^2$-Dolbeault complex, singular Hermitian metrics, complex spaces, $L^2$-estimates, cohomology vanishing, weakly pseudoconvex.}

\begin{document}
\title
[$L^2$-Dolbeault resolutions and Nadel vanishing] 
{$L^2$-Dolbeault resolutions and Nadel vanishing \\ on weakly pseudoconvex complex spaces \\ with singular Hermitian metrics}
\author{Yuta Watanabe}
\address{Department of Mathematics, Faculty of Science and Engineering, Chuo University.
1-13-27 Kasuga, Bunkyo-ku, Tokyo 112-8551, Japan}
\email{{\tt wyuta.math@gmail.com}, {\tt wyuta@math.chuo-u.ac.jp}}

\begin{abstract}
    In this paper, in order to develop a more general $L^2$-theory for the $\overline{\partial}$-operator on complex spaces, we provide $L^2$-Dolbeault fine resolutions and isomorphisms, and $L^2$-estimates, 
    for holomorphic line bundles on complex spaces equipped with singular Hermitian metrics. As applications, we obtain several generalizations of the Nadel vanishing theorem. 
\end{abstract}


\maketitle

\vspace{-5mm}

\setcounter{tocdepth}{1}
\tableofcontents

\vspace{-7mm}

\section{Introduction}

The $L^2$-theory for the $\dbar$-operator has become a fundamental and indispensable subject in complex analysis through the pioneering works on $L^2$-estimates and existence theorems (see, e.g., \cite{Hor65,Hor90,AV65}). 
Subsequently, complex geometry has been further developed through applications combining the Ohsawa-Takegoshi $L^2$-extension theorem (see \cite{OT87}) with singular Hermitian metrics on holomorphic vector bundles and their positivity (see \cite{Dem82,Dem90,BP08,Ber09,PT18,HPS18,DNWZ23,Li24,Wat26,IMW26}). 
As a consequence, singular Hermitian metrics and multiplier ideal sheaves have come to play an increasingly important role in complex geometry.
On the other hand, while the $L^2$-theory for the $\dbar$-operator has been highly developed on complex manifolds, establishing a satisfactory $L^2$-theory on complex spaces still involves substantial difficulties. 
Among the central issues are $L^2$-Dolbeault fine resolutions and isomorphism theorems, together with $L^2$-estimates.
Nevertheless, partial progress has been made in the case of smooth Hermitian metrics \cite{Rup14} and in the case where the singularities of the Hermitian metric are of line bundle type and an $\bb{R}$-polarized variation of Hodge structure (hereafter, an $\bb{R}$-PVHS) exists \cite{SZ23,SZ24}.

In this paper, we study these fundamental problems for holomorphic line bundles equipped with singular Hermitian metrics and further twisted by holomorphic vector bundles on complex spaces. 
Our approach is based on the strong openness property, $L^2$-estimates arising from Demailly's approximation theorem, and related techniques. As an application, we also prove the corresponding Nadel-type vanishing theorems.

Let $X$ be a complex space of pure dimension $n$. 
The \textit{Grauert}-\textit{Riemenschneider canonical sheaf} $\omega_X^{GR}$ on $X$ is defined by $\Gamma(U,\omega_X^{GR})=\{s\in\Gamma(U\cap\reg,\omega_{\reg})\mid i^{n^2}s\wedge\overline{s}\in L^1_{loc}(U)\}$
for any open subset $U\subset X$. 
For each locally integrable function $\varphi\in L^1_{loc}(X)$, we denote by $\ogr(\varphi)$ the $\cal{O}_X$-sheaf on $X$ defined by 
\begin{align*}
    \Gamma(U,\omega_X^{GR}(\varphi))=\{s\in\Gamma(U\cap\reg,\omega_{\reg})\mid i^{n^2}s\wedge\overline{s}\cdot e^{-2\varphi}\in L^1_{loc}(U)\}
\end{align*}
for any open subset $U\subset X$ (see \cite[Definition 2.3]{Li24}). In particular, for a singular Hermitian metric $h$ on a holomorphic line bundle, we defines $\ogr(h):=\ogr(\varphi)$ if $h$ can be written locally as $h=e^{-2\varphi}$.
Furthermore, for any quasi-plurisubharmonic function $\varphi:X\longrightarrow [-\infty,+\infty)$, the $\cal{O}_X$-sheaf $\ogr(\varphi)$ satisfies the functoriality property $\ogr(\varphi)=\pi_*(K_{\tx}\otimes\scr{I}(\pi^*\varphi))$ (see \cite[Proposition 5.8]{Dem12}), 
where $\pi:\tx\longrightarrow X$ is any resolusion of singularities. 
In particular, if $X$ is smooth, then $\ogr(\varphi)=K_X\otimes\scr{I}(\varphi)$ by definition and $\pi:\tx\longrightarrow X$ can be taken to be any proper modification.

Throughout this paper, all complex spaces are assumed to be reduced. 
Let $X$ be a pure-dimensional complex space, let $L\longrightarrow X$ be a holomorphic line bundle equipped with a \textit{singular} Hermitian metric $h$ (see Definition \ref{Definition: singular Hermitian metrics on cpx sp}), and let $E\longrightarrow X$ be a holomorphic vector bundle equipped with a \textit{smooth} Hermitian metric $h_E$. 
For a resolution of singularities $\pi:\tx\longrightarrow X$, denote by $\exc:=\pi^{-1}(\xs)$ the $\pi$-exceptional divisor with simple normal crossings. 
When $X$ is non-compact, the resolution $\pi$ is always taken to be the canonical desingularization given in Theorem \ref{Theorem: canonical desingularization}.

We first present the following $L^2$-Dolbeault fine resolution and isomorphism.
Let $\scr{L}^{p,q}_{L,h}$ denote the subsheaf on $X$ consisting of germs of $L$-valued $(p,q)$-forms $u$ with measurable coefficients such that both $|u|^2_h$ and $|\dbar u|^2_h$ are locally integrable on the regular locus (see Section \ref{Section: dbar-operator and L2-Dolbeault complexes}).  
Here $\dbar$-operator is considered in the sense of distribution on $\reg$. 
The subsheaf $\scr{L}^{p,q}_{E\otimes L,h_E\otimes h}$ is defined in the same manner.

\begin{theorem}\label{Theorem: fine Dolbeault resolution}
    Let $X$ be a complex space of pure dimension $n$ and $L\longrightarrow X$ be a holomorphic line bundle equipped with a singular Hermitian metric $h$.
    Let $E\longrightarrow X$ be a holomorphic vector bundle with a smooth Hermitian metric $h_E$.
    If any local weight function of $h$ on $X$ is quasi-plurisubharmonic, then the $L^2$-Dolbeault complex 
    \begin{align*}
        0\longrightarrow \ogr(h)\otimes\cal{O}_X(E\otimes L)\hookrightarrow\scr{L}^{n,0}_{E\otimes L,h_E\otimes h}\overset{\dbar}{\longrightarrow}\scr{L}^{n,1}_{E\otimes L,h_E\otimes h}\overset{\dbar}{\longrightarrow}\scr{L}^{n,2}_{E\otimes L,h_E\otimes h}\overset{\dbar}{\longrightarrow}\cdots
    \end{align*}
    is exact; that is, the complex $(\scr{L}^{n,\ast}_{E\otimes L,h_E\otimes h},\dbar)$ is $L^2$-Dolbeault fine resolution of $\ogr(h)\otimes\cal{O}_X(E\otimes L)$.
    Thus, we have the following $L^2$-Dolbeault isomorphism 
    \begin{align*}
        H^q(X,\ogr(h)\otimes\cal{O}_X(E\otimes L))\cong H^q(\Gamma(X,\scr{L}^{n,\ast}_{E\otimes L,h_E\otimes h}))
    \end{align*}
    for any $q>0$.
\end{theorem}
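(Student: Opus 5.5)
The plan is to check exactness at the level of stalks and fineness separately. Fineness is routine. Each $\scr{L}^{n,q}_{E\otimes L,h_E\otimes h}$ is a module over the sheaf of smooth functions on $X$ (smooth in the sense of local embeddings). Multiplying by a smooth cutoff $\chi$ preserves both conditions, because $\dbar(\chi u)=\dbar\chi\wedge u+\chi\dbar u$ and $|\dbar\chi|$ is locally bounded with respect to the restricted ambient metric. Partitions of unity therefore make the sheaves fine. The $L^2$-Dolbeault isomorphism then follows from the abstract de Rham theorem once the complex is known to be exact.

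\textbf{Exactness in degree $0$.} A germ $u\in\scr{L}^{n,0}$ with $\dbar u=0$ on $\reg$ is a holomorphic $E\otimes L$-valued $n$-form on $\reg$. Since $h_E$ is smooth and locally comparable to the identity, $|u|^2_{h_E\otimes h}\in L^1_{loc}$ means exactly that $i^{n^2}u\wedge\overline u\,e^{-2\varphi}$ is locally integrable near singular points. This is the definition of $\ogr(h)\otimes\cal{O}_X(E\otimes L)$, so the kernel is identified with that sheaf.

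\textbf{Exactness in degree $q\ge1$.} This is local. Fix $x\in X$ and take a small Stein neighbourhood $U$ on which $E$ and $L$ are trivial and $h=e^{-2\varphi}$ with $\varphi$ quasi-psh. Let $\pi:\tx\to X$ be the canonical desingularization and set $\tl U=\pi^{-1}(U)$. Since $n$-forms pull back isometrically in the $L^2$ sense, with no Jacobian factor for $(n,q)$-forms,
\begin{align*}
\int_{U\cap\reg}|u|^2e^{-2\varphi}=\int_{\tl U\setminus\exc}|\pi^*u|^2_{\pi^*\omega}e^{-2\pi^*\varphi},
\end{align*}
so the problem transfers to $\tl U$ with the degenerate metric $\pi^*\omega$. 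Given a $\dbar$-closed $u$, I would solve $\dbar v=u$ on $\tl U\setminus\exc$ with an $L^2$ estimate. For this I would use a complete Kähler metric on $\tl U\setminus\exc$, namely $\pi^*\omega$ plus $\idd$ of a psh exhaustion of the Stein set $U$ plus a Poincaré-type term along $\exc$. For $(n,q)$-forms the $L^2$ norm decreases as the metric increases, so $u$ stays in $L^2$, and Hörmander's estimate for $(n,q)$-forms applies.

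The positivity needed for Hörmander is obtained as follows. Add $C|z|^2$ (using a local embedding) to the weight to absorb the negative part of $\idd\varphi$. Then run Demailly's approximation $\varphi_\varepsilon\downarrow\varphi$, with analytic singularities and loss of positivity controlled on the relatively compact set. Solve with weights $\varphi_\varepsilon$, obtaining uniform $L^2$ bounds, and take a weak limit. The solution $v$ then lies in $L^2(e^{-2\varphi})$ on $\tl U\setminus\exc$, and since $\exc$ has measure zero, $v$ descends to $U\cap\reg$. We also have $\dbar v=u$ there, hence $v\in\scr{L}^{n,q-1}$.

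\textbf{Main obstacle.} The difficulty is the limiting step together with the degenerate metric near $\exc$. Hörmander needs $i\Theta\ge0$ in a sense uniform in $\varepsilon$, but $\pi^*\varphi$ is only quasi-psh with respect to the degenerate $\pi^*\omega$. I would first try to perturb the weight to $\varphi_\varepsilon+\delta\psi$, where $\psi$ is a strictly psh exhaustion, accepting a loss of the form $e^{-2(1+\delta)\varphi}$. Strong openness then recovers integrability with weight $e^{-2\varphi}$ after shrinking $U$; this is exactly where the strong openness property cited in the introduction enters. A secondary issue is that $\dbar v=u$ must hold in distributions on all of $\reg$ and not just off $\exc$. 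This is automatic, because $\pi$ is biholomorphic over $\reg$.
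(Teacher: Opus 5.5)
Your overall plan is the paper's. It uses fineness via cutoffs and the kernel in degree $0$. For $q\ge1$ it proves local $L^2$-solvability of $\dbar$ on $\ureg$ for a small Stein neighbourhood $U$, using a complete \kah metric on $\ureg$, the weight $\varphi+C\psi$ with $\psi=\iota_U^*|z|^2$ and $\omega=\idd\psi$, the trivial metric on $E|_U$, approximation and a weak limit. The paper packages this last step as Theorem~\ref{Theorem: global L2-estimates} applied to $(U,\omega,he^{-C\psi})$, which rests on Th\'eor\`eme~5.1 of \cite{Dem82}. Your degree-$0$ argument, read directly off the definition of $\ogr(h)$, is a valid shortcut. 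The paper instead goes through $\pi$, the $\dbar$-extension lemma and $\pi_*(K_{\tx}\otimes\scr{I}(\pi^*h))=\ogr(h)$.

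The paragraph you call the main obstacle is wrong, and the problem it tries to solve does not exist. Strong openness (Theorem~\ref{Theorem: strong openness property}) is a statement about holomorphic germs, i.e.\ multiplier ideals. The form that would need integrability against a more singular weight $e^{-2(1+\delta)\varphi}$ is the $\dbar$-closed $(n,q)$-form $u$ with $q\ge1$. For such forms, $\int|u|^2e^{-2\varphi}<\infty$ gives no integrability against any more singular weight. Also, adding a smooth $\delta\psi$ to the weight does not produce a factor $e^{-2\delta\varphi}$. The paper uses strong openness only in the compact-case vanishing theorems, not here.

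The degeneracy you fear appears only if you regularize $\pi^*\varphi$ on $\tl{U}$ relative to a \kah metric $\gamma$ there. In that case the loss $\lambda_\nu\gamma$ is not dominated by $\pi^*\omega$ near $\exc$. Regularize downstairs instead. Write $\varphi+C\psi=\tl{\phi}\circ\iota_U$ with $\idd\tl{\phi}\ge\idd|z|^2$ on the ambient open set, which is possible because $\varphi$ is quasi-psh and $U$ is small. The convolutions $\tl{\phi}*\rho_\varepsilon$ are smooth, decrease to $\tl{\phi}$, and still satisfy $\idd(\tl{\phi}*\rho_\varepsilon)\ge\idd|z|^2$, because $\idd|z|^2$ has constant coefficients. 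Their restrictions therefore give curvature $\ge\omega$ on $\ureg$ uniformly in $\varepsilon$, with no loss at all. Alternatively, apply Th\'eor\`eme~5.1 of \cite{Dem82} on $\ureg$ with respect to $\omega$ itself, as the paper does. Since $(\tl{U}\setminus\exc,\pi^*\omega)\cong(\ureg,\omega)$, the detour through $\tl{U}$ buys nothing in this step.

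One more step must be made explicit for $q\ge2$. Hörmander's estimate for the complete metric $\widehat{\omega}\ge\omega$ bounds the solution $v$ only in $L^2(\widehat{\omega})$. By Lemma~\ref{Lemma: inequality of (n,0)-forms}, $\|v\|_{\widehat{\omega}}\le\|v\|_{\omega}$ and not conversely, for $(n,q-1)$-forms with $q-1\ge1$. Membership in $\scr{L}^{n,q-1}$ near $\xs$, however, requires $L^2(\omega)$.

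Likewise, the right-hand side of the estimate is the curvature term, not $\|u\|_{\widehat{\omega}}$. It is dominated by its $\omega$-counterpart only through Demailly's monotonicity lemma \cite[Chapter VIII, Lemma 6.3]{Dem-book}; the inequality $\|u\|_{\widehat{\omega}}\le\|u\|_\omega$ alone does not suffice. The standard remedy is to solve with $\omega+\epsilon\widehat{\omega}$ and let $\epsilon\to0$. This is the estimate for a non-complete \kah $\omega$ on a manifold carrying some complete \kah metric, and it is exactly what the paper imports via \cite{Dem82}.

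With these two corrections your argument proves the theorem.
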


\begin{theorem}\label{Theorem: L2-Dolbeault isomorphism}
    Let $X$ be a complex space of pure dimension $n$ and $\pi:\tx\longrightarrow X$ be a canonical desingularization. 
    If any local weight of $h$ on $X$ is quasi-plurisubharmonic, then the $L^2$-Dolbeault complex $(\pi_*\scr{L}^{n,*}_{\pi^*E\otimes\pi^*L,\pi^*h_E\otimes\pi^*h},\pi_*\dbar)$ 
    is a fine resolution of 
    \begin{align*}
        \ogr(h)\otimes\cal{O}_X(E\otimes L)\cong\pi_*(\omega_{\tx}\otimes\cal{O}_{\tx}(\pi^*E\otimes\pi^*L)\otimes\scr{I}(\pi^*h)).
    \end{align*}
    Thus, we have the following $L^2$-Dolbeault isomorphism 
    \begin{align*}
        H^q(X,\ogr(h)\otimes\cal{O}_X(E\otimes L))\cong H^q(\tx,K_{\tx}\otimes\pi^*E\otimes\pi^*L\otimes\scr{I}(\pi^*h)) 
    \end{align*}
    for any $q\geq0$. 
\end{theorem}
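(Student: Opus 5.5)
The plan is to work entirely on the desingularization $\tx$, where the problem becomes a smooth one, and then push everything down by $\pi$. Set $\tl{L}=\pi^*L$, $\tl{E}=\pi^*E$, $\tl{h}=\pi^*h$ and $h_{\tl E}=\pi^*h_E$. The local weights of $\tl{h}$ are quasi-plurisubharmonic on the complex manifold $\tx$, since they are pullbacks of quasi-psh functions. Because $\pi^*h_E$ is smooth, $\scr{L}^{n,q}_{\tl E\otimes\tl L,\pi^*h_E\otimes\tl h}$ is computed with respect to any local Hermitian metric on $\tx$. For $(n,q)$-forms the relevant $L^2$ norms are insensitive to the choice of metric up to constants on compact sets; for $(n,0)$-forms they are in fact metric-independent.

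\textbf{Step 1: exactness upstairs.} I will show that on $\tx$ the complex
\[
0\to K_{\tx}\otimes\cal{O}(\tl E\otimes\tl L)\otimes\scr{I}(\tl h)\to\scr{L}^{n,*}_{\tl E\otimes\tl L,h_{\tl E}\otimes\tl h}
\]
is a fine resolution. This is the manifold case of Theorem \ref{Theorem: fine Dolbeault resolution}, which I assume. If one prefers to argue directly, local exactness on small balls, where $\tl E$ and $\tl L$ are trivial and the weight is $\varphi+C|z|^2$ with $\varphi$ psh, follows from H\"ormander's $L^2$-estimate with a strictly psh weight. In degree $0$, $\dbar u=0$ with $|u|^2e^{-2\varphi}\in L^1_{loc}$ gives a holomorphic section of $K\otimes\scr{I}(\varphi)$. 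Fineness holds because the sheaves are modules over smooth functions.

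\textbf{Step 2: push forward.} Each $\pi_*\scr{L}^{n,q}$ is again a fine sheaf on $X$: multiplication by $\pi^*\chi$, for smooth partitions of unity $\chi$ on $X$, preserves these sheaves because $\dbar(\pi^*\chi\, u)=\pi^*\chi\,\dbar u+\dbar\pi^*\chi\wedge u$ and $\pi^*\chi$ has locally bounded derivatives upstairs. The key point is that $R^q\pi_*$ of $K_{\tx}\otimes\cal{O}(\tl E\otimes\tl L)\otimes\scr{I}(\tl h)$ vanishes for $q>0$. This is the local vanishing theorem of Grauert–Riemenschneider type with multiplier ideals, proved via Nadel vanishing on $\pi^{-1}$(Stein neighborhood), where $\pi$ is projective for the canonical desingularization of Theorem \ref{Theorem: canonical desingularization}.

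In $L^2$ terms, this says the following. Take $x\in X$ and a small Stein neighborhood $U$ with a strictly psh exhaustion $\psi$. Then $\pi^{-1}(U)$ is weakly pseudoconvex with exhaustion $\pi^*\psi$ and carries a K\"ahler metric $\pi^*\omega_U+\epsilon\,\omega_{\text{rel}}$, where $\omega_{\text{rel}}$ is the curvature of a $\pi$-ample line bundle, available by projectivity of $\pi$. Given a $\dbar$-closed $u\in\Gamma(\pi^{-1}(U),\scr{L}^{n,q})$, I will solve $\dbar v=u$ on a slightly smaller $\pi^{-1}(U')$. To do so I twist the weight by $\pi^*\psi$ to absorb the negativity $-C\omega$ of the quasi-psh weight. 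I then apply Demailly's approximation, regularizing $\tl h$ by metrics with analytic singularities and curvature $\geq -\epsilon\omega$. The solutions are passed to the limit using the uniform estimate together with strong openness to recover integrability against $\tl h$ itself. Combining the vanishing of $R^q\pi_*$ for $q>0$ with Step 1 gives exactness of $(\pi_*\scr{L}^{n,*},\pi_*\dbar)$ in positive degrees. In degree $0$ the kernel is $\pi_*(K_{\tx}\otimes\cal{O}(\tl E\otimes\tl L)\otimes\scr{I}(\tl h))$.

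\textbf{Step 3: identify the degree-$0$ sheaf and conclude.} I identify this degree-$0$ sheaf with $\ogr(h)\otimes\cal{O}_X(E\otimes L)$ using the functoriality $\ogr(\varphi)=\pi_*(K_{\tx}\otimes\scr{I}(\pi^*\varphi))$ recalled in the introduction, together with the projection formula for the locally free sheaf $E\otimes L$. The isomorphism of cohomologies then follows from the de Rham–Weil theorem applied to both fine resolutions: $H^q(X,\cdot)$ is computed by $\Gamma(X,\pi_*\scr{L}^{n,*})=\Gamma(\tx,\scr{L}^{n,*})$, which computes the $\tx$-cohomology by Step 1. Equivalently, the Leray spectral sequence degenerates by the vanishing of $R^q\pi_*$.

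The main obstacle is the vanishing $R^q\pi_*=0$ for singular $\tl h$, because $\pi^{-1}(U)$ is only weakly pseudoconvex and the metric on $\tx$ is not complete near $\exc$ a priori. My first attempt is to cite the relative Nadel vanishing for projective morphisms, which holds for quasi-psh weights, after shrinking $U$ to be Stein and adding a large multiple of $\pi^*\psi$. If a direct $L^2$ proof is needed, I will use a complete K\"ahler metric on $\pi^{-1}(U)$ minus an analytic set, built from the exhaustion and the regularization, and rely on the fact that $L^2$ $\dbar$-extension across analytic sets holds for $(n,q)$-forms.
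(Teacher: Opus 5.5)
Your proposal follows the paper's proof essentially step for step. The steps are: exactness of the $L^2$-Dolbeault complex on $\tx$ (the paper cites [Wat25a, Theorem 5.3]); the vanishing $R^q\pi_*(K_{\tx}\otimes\pi^*E\otimes\pi^*L\otimes\scr{I}(\pi^*h))=0$ for $q>0$ (taken as a black box from [Fuj13, Corollary 1.2]); fineness of direct images of fine sheaves; the Leray spectral sequence; and the identification of the degree-$0$ sheaf via $\ogr(h)=\pi_*(K_{\tx}\otimes\scr{I}(\pi^*h))$ and the projection formula.

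One caution about your suggested justification of that vanishing. Adding $C\pi^*\psi$ only gives curvature $\geq\pi^*\omega_U$, which degenerates in the fibre directions along $\exc$. So Nadel vanishing on $\pi^{-1}(U)$ does not apply as stated, and Demailly regularization only loses positivity, so it cannot repair this. You should either cite a vanishing for bimeromorphic maps that needs only semipositivity (torsion-freeness type), as the paper does, or twist further. For the second option, take $e^{-\varepsilon\psi_F}$ with $\psi_F$ quasi-plurisubharmonic with analytic singularities along $\exc$, chosen so that $\pi^*\omega_U+\varepsilon\idd\psi_F$ is bounded below by a K\"ahler form. Then strong openness plus Noetherianity on relatively compact sets gives $\scr{I}(\pi^*h\,e^{-\varepsilon\psi_F})=\scr{I}(\pi^*h)$ for small $\varepsilon$.
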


In the case of line bundles, Theorems \ref{Theorem: fine Dolbeault resolution} and \ref{Theorem: L2-Dolbeault isomorphism} are known for smooth Hermitian metrics by \cite{Rup14}, while analogous results for singular Hermitian metrics were obtained in \cite{SZ23}. 
Furthermore, for vector bundles twisted by such line bundles, an analogous result to Theorem \ref{Theorem: fine Dolbeault resolution} was obtained in \cite[Theorem 5.1]{SZ24} under the assumption that an $\bb{R}$-PVHS exists.
Theorems \ref{Theorem: fine Dolbeault resolution} and \ref{Theorem: L2-Dolbeault isomorphism} generalize these results to a more natural setting without assuming the existence of an $\bb{R}$-PVHS or the compactness of $X$.

Subsequently, to establish the vanishing theorem, we present the following global $L^2$-estimates on the regular locus of a weakly pseudoconvex complex space.
Here, weakly pseudoconvexity includes both compact and Stein cases and forms the broadest and highly significant class among the complex-geometric objects that can be treated.
In fact, every complex Lie group is always weakly pseudoconvex (see \cite{Kaz73}).

\begin{theorem}\label{Theorem: global L2-estimates}
    Let $X$ be a weakly pseudoconvex \kah complex space of pure dimension $n$. 
    We assume that $h_E$ is Demailly $m$-semi-positive on $\reg$ and 
    $\iO{L,h}\geq\varepsilon\omega$ on $\reg$ in the sense of currents for some semi-positive smooth function $\varepsilon:X\longrightarrow\bb{R}_{\geq0}$ and a \kah metric $\omega$. 
    For any $q>0$ satisfying $m\geq\min\{n-q+1,\rom{rank}\,E\}$ and for any $f\in L^{2,loc}_{n,q}(X,E\otimes L,h_E\otimes h)$ satisfying $\dbar f=0$ on $\reg$ and $\int_X\frac{1}{\varepsilon}|f|^2_{h_E\otimes h,\omega}\,\dvo<+\infty$, 
    there exists $u\in L^2_{n,q-1}(X,E\otimes L;\omega,h_E\otimes h)$ satisfying $\dbar u=f$ on $\reg$ and 
    \begin{align*}
        \int_X|u|^2_{h_E\otimes h,\omega}\,\dvo\leq\frac{1}{q}\int_X\frac{1}{\varepsilon}|f|^2_{h_E\otimes h,\omega}\,\dvo.
    \end{align*}
    
    Furthermore, if $h_E$ is Nakano semi-positive on $\reg$, then the global $L^2$-estimate holds for every $q>0$. 
    Under the same assumption, even if $X$ is not \kah and $\omega$ is simply a Hermitian metric, the global $L^2$-estimate still holds only in the case $q=1$.
\end{theorem}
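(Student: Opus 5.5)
We reduce to the classical $L^2$-existence theorem on a complete Kähler manifold, applied on the regular locus. Fix a smooth psh exhaustion $\psi$ of $X$ and set $X_c=\{\psi<c\}\cap\reg$. Since $\xs$ is an analytic subset of the weakly pseudoconvex space, $\reg$ carries a complete Kähler metric. We construct one explicitly: pick a bounded smooth function $\phi\le 0$ on $\reg$ with $\phi\to-\infty$ near $\xs$, locally of the form $-\log(-\log\sum|g_j|^2)$ for local generators $g_j$ of the ideal of $\xs$. Then $\hat\omega_{c}=\omega+i\partial\dbar(-\log(c-\psi))+ i\partial\dbar(-\log(-\phi))$ (suitably normalized) is a complete Kähler metric on $X_c$ with $\hat\omega_c\ge\omega$.

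By the standard monotonicity $|f|^2_{\hat\omega}dV_{\hat\omega}\le|f|^2_{\omega}dV_\omega$ for $(n,q)$-forms, the finiteness hypothesis survives the change of metric. In the non-Kähler Hermitian case with $q=1$, we use $\omega+\epsilon\hat\omega$ and the fact that for $(n,1)$-forms the torsion terms do not enter (the Bochner–Kodaira–Nakano identity with torsion is trivial in bidegree $(n,0)$ for the adjoint), so completeness alone suffices.

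Next we regularize $h$. On each relatively compact $X_c$ we would like Demailly-type approximations $h_\nu=e^{-2\varphi_\nu}$ increasing to $h$, with $\iO{L,h_\nu}\ge(\varepsilon-\delta_\nu)\omega$ and $\delta_\nu\to0$. Since $\reg\cap X_c$ is not compact, we do this on $\tx$ via $\pi$. The canonical desingularization is an isomorphism over $\reg$, and the approximation on the compact set $\pi^{-1}(\overline{X_c})$ of the Kähler-type manifold $\tx$ gives smooth (or analytic-singularity) metrics. An alternative, avoiding regularization entirely, is to apply the Bochner inequality to the singular metric directly on the complete manifold $X_c\cap\{\varphi\text{ locally bounded}\}$. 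We would follow Demailly's route, which handles singular quasi-psh weights via the Hörmander estimate with weight plus a limiting argument.

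For each $\nu$ and $c$, the curvature operator $A=[\iO{E\otimes L},\Lambda_{\hat\omega}]$ on $(n,q)$-forms satisfies $\langle Au,u\rangle\ge (q\varepsilon-q\delta_\nu)|u|^2$. This holds because Demailly $m$-semipositivity of $h_E$ with $m\ge\min\{n-q+1,\operatorname{rank}E\}$ gives nonnegativity of $[\iO{E},\Lambda]$ on $(n,q)$-forms (the standard lemma), while the line bundle term contributes at least $q\varepsilon|u|^2$ when $\hat\omega\ge\omega$. In the Nakano case the first term is nonnegative for all $q$. The $L^2$ existence theorem on the complete Kähler manifold $X_c$ then gives $u_{c,\nu}$ with $\dbar u_{c,\nu}=f$ and
\[\int_{X_c}|u_{c,\nu}|^2_{h_\nu,\hat\omega}dV_{\hat\omega}\le\int_{X_c}\frac{1}{q(\varepsilon-\delta_\nu)}|f|^2_{h_\nu,\hat\omega}dV_{\hat\omega}\le\frac1{q}\int_X\frac{1+o(1)}{\varepsilon}|f|^2_{h,\omega}dV_\omega,\]
using $h_\nu\le h$ (up to normalization). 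Where $\varepsilon$ vanishes we replace $\varepsilon$ by $\varepsilon+\eta$ and let $\eta\to0$.

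Finally we pass to limits. We take weak limits in $L^2$ first as $\nu\to\infty$, using monotonicity in $h_\nu$ to get a bound in $L^2(h_{\nu_0})$ for each fixed $\nu_0$ and then monotone convergence. We then take the limit in $\hat\omega\to\omega$, via $\epsilon\to0$ with the inequality $|u|^2_{\hat\omega}dV_{\hat\omega}\ge|u|^2_{\omega}dV_\omega$ for $(n,q-1)$-forms (which actually goes the right way for $(n,\cdot)$-forms), and lastly $c\to\infty$ by a diagonal argument. Weak limits preserve $\dbar u=f$ in the distribution sense on $\reg$ and the norm bound by lower semicontinuity.

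The main obstacle is the regularization step. We need approximations of the singular metric with controlled loss of positivity near $\xs$, where $\omega$ degenerates relative to the pullback metric on $\tx$, so that the negative error $\delta_\nu\omega$ is truly dominated uniformly on $X_c$. We would try to perform the approximation on $\tx$ with respect to a Kähler form $\tilde\omega\ge\pi^*\omega$ and absorb the error using $\hat\omega$. If this fails, we would instead use the strong openness property to work with the singular weight directly.
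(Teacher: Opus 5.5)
Your plan has a genuine gap at the regularization step, the one you flag yourself, and several secondary errors on top of it.

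\textbf{The main gap.} Your whole estimate rests on weights $\varphi_\nu\searrow\varphi$ with $\iO{L,h_\nu}\geq(\varepsilon-\delta_\nu)\omega$ for \emph{constants} $\delta_\nu\to0$. Such weights are not available.

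Demailly's smooth regularization only gives $\idd\varphi_\nu\geq\gamma_\nu-\lambda_\nu\omega$, with $\lambda_\nu\to0$ only at points of zero Lelong number. A uniform version is impossible in general. Take the strictly plurisubharmonic weight $\varPsi=\log|\sigma|^2+(\text{smooth})$ near the exceptional divisor $E_p\cong\bb{P}^{n-1}$ of a point blow-up, as in the paper's remark. A uniform loss would give, for large $\nu$, a smooth strictly plurisubharmonic function near the compact submanifold $E_p$, which the maximum principle on $E_p$ forbids.

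Moreover, $\varepsilon$ is only semi-positive. So even a uniform loss may leave $\varepsilon-\delta_\nu<0$ on open sets. Replacing $\varepsilon$ by $\varepsilon+\eta$ is not allowed, because the curvature hypothesis does not improve.

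Passing to $\tx$ does not help.
\begin{itemize}
\item Positivity is assumed only on $\reg$, so $\pi^*\varphi$ need not be quasi-plurisubharmonic across $\exc$.
\item Even when it is, approximations with analytic singularities on $\pi^{-1}(\overline{X_c})$ lose positivity uniformly only relative to a K\"ahler form $\tl{\omega}$ on $\tx$. That form is not dominated by $\varepsilon\pi^*\omega$ near $\exc$.
\item Enlarging the metric to $\hat\omega$ cannot absorb this loss. Already for $q=1$, a negative direction of the curvature form gives a negative eigenvalue of $[\,\cdot\,,\Lambda_{\hat\omega}]$ for every metric.
\end{itemize}

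What is missing is an argument that carries the negative part $\lambda_\nu\omega$ as an error term and removes it in the limit, using only that $\lambda_\nu\to0$ and $\gamma_\nu\to(\idd\varphi)_{ac}$ almost everywhere. This is exactly Demailly's Th\'eor\`eme~5.1 of [Dem82], and the paper applies it directly:
\begin{itemize}
\item on $M=(X_{c_j})_{reg}$, which is complete K\"ahler by Th\'eor\`eme~1.5 of [Dem82] and on which $\varphi$ is quasi-plurisubharmonic;
\item with the non-complete K\"ahler metric $\omega$ itself;
\item with the pointwise bound $[\iO{E\otimes L,h_E\otimes h_0}+(\idd\varphi)_{ac},\Lambda_\omega]\geq q\varepsilon$ on $(n,q)$-forms;
\item and then letting $c_j\to\infty$ by a weak-limit argument.
\end{itemize}
Non-compactness of $(X_c)_{reg}$ was never the obstruction, and no regularization on $\tx$ is needed.

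\textbf{Secondary errors (repairable).}

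First, for $\hat\omega\geq\omega$ the eigenvalues of $\varepsilon\omega$ relative to $\hat\omega$ are at most $\varepsilon$, and they tend to $0$ where $\hat\omega$ blows up. So your pointwise bound $\lara{Au}{u}\geq q(\varepsilon-\delta_\nu)|u|^2$ for $A=[\,\cdot\,,\Lambda_{\hat\omega}]$ is false. Only the integrated comparison $\lara{A_{\hat\omega}^{-1}f}{f}_{\hat\omega}dV_{\hat\omega}\leq\lara{A_\omega^{-1}f}{f}_\omega dV_\omega$ holds (Demailly's book, Chapter VIII, Lemma 6.3), and Th\'eor\`eme~5.1 already builds this in.

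Second, for $(n,p)$-forms and $\hat\omega\geq\omega$ one has $|u|^2_{\hat\omega}dV_{\hat\omega}\leq|u|^2_\omega dV_\omega$, with equality only for $p=0$. This is the same inequality you used for $f$. So for $q\geq2$ your final passage from $\hat\omega$ to $\omega$ runs the wrong way. It needs the usual argument with $\omega+\epsilon\hat\omega$, a fixed comparison metric, and monotone convergence.

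Third, in the non-K\"ahler case $q=1$, your claim that torsion does not enter is unsupported. The torsion operators in the Bochner--Kodaira--Nakano identity act nontrivially on $(n,1)$-forms, which is where the a priori estimate is needed. Also, your $\hat\omega_c=\omega+\cdots$ is not K\"ahler when $\omega$ is not.

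The paper's reason that only $q=1$ survives is different. It solves with respect to a complete K\"ahler metric $\widehat{\omega}_j\geq\omega$ on $(X_{c_j})_{reg}$ and bounds the right-hand side by the comparison lemma. It then uses $|u_j|^2_{\widehat{\omega}_j}dV_{\widehat{\omega}_j}=|u_j|^2_\omega dV_\omega$ for the $(n,0)$-form $u_j$. Note that this step, too, presupposes a K\"ahler metric on $(X_{c_j})_{reg}$.

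Finally, drop the claim that $\reg$ itself is complete K\"ahler. The paper stresses that this is only known for $(X_c)_{reg}$, which is all your argument actually uses.
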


Together with Theorem \ref{Theorem: fine Dolbeault resolution}, we obtain the following.

\begin{theorem}\label{Theorem: Nadel vanishing on w.p.c. cpx sp}
    Let $X$ be a weakly pseudoconvex \kah complex space of pure dimension. 
    If $h$ is singular positive on $X$ and $h_E$ is Nakano semi-positive on $\reg$ in the usual sense, then we have the following vanishing 
    \begin{align*}
        H^q(X,\ogr(h)\otimes\cal{O}_X(E\otimes L))=0 
    \end{align*}
    for any $q>0$. 
    Furthermore, if $X$ is not necessarily K\"{a}hler, then we have only the first cohomology vanishing $H^1(X,\ogr(h)\otimes\cal{O}_X(E\otimes L))=0$.
\end{theorem}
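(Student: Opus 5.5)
The plan is to combine the $L^2$-Dolbeault isomorphism of Theorem \ref{Theorem: fine Dolbeault resolution} with the global $L^2$-estimate of Theorem \ref{Theorem: global L2-estimates}. Here "singular positive" is read as follows: locally $h=e^{-2\varphi}$ with $\varphi$ quasi-psh, and $\iO{L,h}\geq\varepsilon\omega$ in the sense of currents for some \kah (respectively Hermitian) metric $\omega$ and some continuous function $\varepsilon>0$.

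First, since the local weights are quasi-plurisubharmonic, Theorem \ref{Theorem: fine Dolbeault resolution} gives
\[
H^q(X,\ogr(h)\otimes\cal{O}_X(E\otimes L))\cong H^q(\Gamma(X,\scr{L}^{n,\ast}_{E\otimes L,h_E\otimes h}))
\]
for every $q>0$. It therefore suffices to show that every $f\in\Gamma(X,\scr{L}^{n,q}_{E\otimes L,h_E\otimes h})$ with $\dbar f=0$ satisfies $f=\dbar u$ for some $u\in\Gamma(X,\scr{L}^{n,q-1}_{E\otimes L,h_E\otimes h})$. Such an $f$ is only locally $L^2$, so its global integral $\int_X\frac{1}{\varepsilon}|f|^2\,\dvo$ may diverge, and Theorem \ref{Theorem: global L2-estimates} cannot be applied to it directly.

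To handle this, I will use the weak pseudoconvexity of $X$. Let $\psi$ be a smooth psh exhaustion function. Choose a smooth convex increasing function $\chi$ growing fast enough that
\[
\int_X\frac{1}{\varepsilon}|f|^2_{h_E\otimes h,\omega}e^{-\chi\circ\psi}\,\dvo<+\infty .
\]
This is possible because the integrals of $|f|^2/\varepsilon$ over the compact sublevel sets $\{\psi<c\}$ are finite. Then replace $h$ by $h':=he^{-\chi\circ\psi}$. Since $\chi\circ\psi$ is psh, $\iO{L,h'}\geq\iO{L,h}\geq\varepsilon\omega$ on $\reg$. Moreover $\chi\circ\psi$ is smooth, so locally the weights of $h'$ and $h$ differ by a bounded function; hence $\ogr(h')=\ogr(h)$ and the local $L^2$ sheaves $\scr{L}^{n,\ast}$ coincide for $h$ and $h'$.

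Next, apply Theorem \ref{Theorem: global L2-estimates} to $h'$. In the \kah case use the Nakano semi-positive statement, valid for all $q>0$; in the non-\kah case use the Hermitian statement, valid for $q=1$. This yields $u$ with $\dbar u=f$ on $\reg$ and $\int_X|u|^2_{h_E\otimes h'}\,\dvo<\infty$. Since $e^{-\chi\circ\psi}$ is locally bounded below, $u$ is locally $L^2$ with respect to $h$. Also $\dbar u=f$ is locally $L^2$, so $u\in\Gamma(X,\scr{L}^{n,q-1}_{E\otimes L,h_E\otimes h})$ and the class of $f$ vanishes. In the non-\kah case the same argument with $q=1$ gives $H^1=0$.

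The main obstacle I expect is the compatibility of the local $L^2$ conditions near $\xs$. The sheaf $\scr{L}^{n,q}$ is defined by local integrability on neighbourhoods of points of $X$, including singular points, so I must check that both the weight change and the metric $\omega$ used in the estimate (a \kah metric on $X$, restricted to $\reg$) define the same local $L^2$ spaces as those used in Theorem \ref{Theorem: fine Dolbeault resolution}. For $(n,q)$-forms, $|f|^2_\omega\,\dvo$ is monotone in $\omega$, which should make the comparison work with whatever reference metric is used there. Any such discrepancy would be handled by noting that the same metric $\omega$ can be used in both places.
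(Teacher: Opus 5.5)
Your proposal is correct and is essentially the paper's own proof, which the paper gives via Theorem~\ref{Theorem: m-positive-Nadel vanishing on w.p.c. cpx sp}. The steps match:
\begin{itemize}
\item reduce to global $\dbar$-solvability via Theorem~\ref{Theorem: fine Dolbeault resolution};
\item get a smooth $\varepsilon>0$ with $\iO{L,h}\geq\varepsilon\omega$ from Proposition~\ref{Proposition: curvature condition of sHm};
\item insert a fast-growing smooth weight $e^{-\chi\circ\varPsi}$ so that $\int_X\frac{1}{\varepsilon}|f|^2\,\dvo$ converges;
\item apply Theorem~\ref{Theorem: global L2-estimates}, for all $q$ in the \kah case and only $q=1$ for a Hermitian $\omega$;
\item use the smoothness of $\chi\circ\varPsi$ to conclude $u\in\Gamma(X,\scr{L}^{n,q-1}_{E\otimes L,h_E\otimes h})$.
\end{itemize}
The compatibility issue you flag near $\xs$ is not a real obstacle. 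As the paper notes, $L^{2,loc}$ on neighbourhoods of singular points does not depend on the choice of Hermitian metric on $X$, because any two such metrics are comparable on compact sets via the local embeddings.
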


Even without the existence of a \kah metric, we obtain the vanishing of first cohomology, which plays an important role in applications such as proving the existence of global holomorphic sections.
Taking $E$ to be the trivial bundle $\cal{O}_X$, this recovers Nadel vanishing and provides the first precise proof of Nadel vanishing. 
In fact, Nadel vanishing on weakly pseudoconvex complex spaces had already been mentioned in \cite[Remark 5.17]{Dem12}. 
However, it seems unlikely that it could have been proved with the techniques available at that time, since $L^2$-Dolbeault resolutions on complex spaces, even in the case of smooth Hermitian metrics, was established only later (see \cite{Rup14}).

When the complex space is (relative) compact, by applying Theorem \ref{Theorem: L2-Dolbeault isomorphism} and using the strong openness property and Demailly's approximation theorem, we can obtain the higher cohomology vanishing without assuming the existence of a \kah metric, which is a generalization of Nadel vanishing. 
In other words, for a complex space on which Nadel vanishing holds, neither normality nor projectivity is required; it is sufficient to assume that the space is Moishezon, which follows from the existence of a big line bundle (see \cite{Moi66}).
Unlike the case of complex manifolds, it should be noted that a Moishezon complex space is not necessarily projective even if it admits a \kah metric (see \cite{Moi75}, \cite[Theorem 6]{Nam02}).

\begin{theorem}\label{Theorem: Nadel vanishing on Moishezon}
    Let $X$ be a compact complex space of pure dimension $n$ and $L\longrightarrow X$ be a holomorphic line bundle. 
    If $L$ is big, then $X$ is Moishezon and there exists a singular Hermitian metric $h$ on $L$ such that $\iO{L,h}\geq\gamma$ on $\reg$ in the sense of currents for some Hermitian metric $\gamma$ on $X$, 
    and we have the following cohomology vanishing 
    \begin{align*}
        H^q(X,\ogr(h)\otimes\cal{O}_X(L))=0 
    \end{align*} 
    for any $q>0$, without assuming the existence of a \kah metric. 
    Here, if $X$ is normal, then we can take the singular Hermitian metric $h$ with $\iO{L,h}\geq\gamma$ on $X$ in the sense of currents, i.e., $h$ is singular positive. 
    
    Furthermore, the same vanishing holds after twisting by a holomorphic vector bundle $E\longrightarrow X$ with a smooth Hermitian metric that is Nakano semi-positive on $\reg$.
\end{theorem}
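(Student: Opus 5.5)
The plan is to pull everything back to a resolution of singularities and work on the compact manifold $\tx$. Let $\pi:\tx\longrightarrow X$ be a resolution of singularities. Since $L$ is big on $X$, $\pi^*L$ is big on $\tx$: bigness is measured by the growth of $h^0(X,L^{\otimes k})\sim k^n$, and $H^0(X,L^{\otimes k})\hookrightarrow H^0(\tx,\pi^*L^{\otimes k})$. Hence $\tx$ is Moishezon by \cite{Moi66}, and so is $X$, being bimeromorphic to $\tx$. On the compact manifold $\tx$ I would invoke the standard characterization of big line bundles (Demailly, Boucksom via Kodaira's lemma): $\pi^*L$ admits a singular Hermitian metric $\tl{h}$ with analytic singularities such that $\iO{\pi^*L,\tl{h}}\geq\tl{\gamma}$ for some Hermitian (even K\"ahler-type, since $\tx$ can be chosen projective after further blow-ups) form $\tl{\gamma}$. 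Off $\exc$ the map $\pi$ is biholomorphic, so $\tl{h}$ descends to a singular Hermitian metric $h$ on $L|_{\reg}$. To extend $h$ across $\xs$ with quasi-plurisubharmonic local weights, I would choose $\tl{h}$ so that its weight has poles along $\exc$ (first write $\pi^*L^{\otimes k}=A+D$ with $A$ ample and $D$ effective containing the exceptional locus, or add $-\epsilon\log|s_{\exc}|^2$), giving $-\infty$ weights on $\xs$. Then $\iO{L,h}\geq\gamma:=(\pi^{-1})^*\tl\gamma$ on $\reg$, and $\pi^{-1}$-pullback of the Hermitian $\tl\gamma$ dominates a Hermitian metric of $X$ on compact subsets of $\reg$. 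This last point is delicate near $\xs$, and I would settle it by shrinking constants using compactness and a local embedding.

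In the normal case, the weight $\varphi$ of $h$ is psh on $\reg$ and locally bounded above near $\xs$. Because $X$ is normal, Grauert--Remmert extension shows it extends uniquely to a psh function on $X$, and the inequality $\iO{L,h}\geq\gamma$ extends as currents across the codimension $\geq2$ set $\xs$. So $h$ is singular positive.

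For the vanishing, Theorem \ref{Theorem: L2-Dolbeault isomorphism} gives
\[
H^q(X,\ogr(h)\otimes\cal{O}_X(E\otimes L))\cong H^q(\tx,K_{\tx}\otimes\pi^*E\otimes\pi^*L\otimes\scr{I}(\pi^*h)).
\]
This requires the local weights of $h$ to be quasi-psh on $X$, which is why the construction above must control $\pi^*h$ near $\exc$; alternatively I would verify $\ogr(h)=\pi_*(K_{\tx}\otimes\scr I(\tl h))$ directly. Now $\tx$ is a compact Moishezon manifold, which is not K\"ahler in general. So instead of applying Nadel directly, I would use a further modification $\mu:\widehat X\to\tx$ with $\widehat X$ projective. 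By Demailly's approximation theorem combined with the strong openness property, the metric $\mu^*\tl h$ can be replaced by one with analytic singularities having the same multiplier ideal and still strictly positive curvature after a small perturbation. Nadel vanishing on projective $\widehat X$ with the Nakano semi-positive twist $\mu^*\pi^*E$ (the twisted version follows from Theorem \ref{Theorem: global L2-estimates} on the compact K\"ahler $\widehat X$) then gives vanishing of $H^q(\widehat X,\cdot)$. The functoriality $\mu_*(K_{\widehat X}\otimes\scr I(\mu^*\tl h))=K_{\tx}\otimes\scr I(\tl h)$ together with the vanishing of higher direct images $R^q\mu_*$ (a local Nadel/Grauert--Riemenschneider vanishing, since $\mu^*\tl h$ is positive relative to $\mu$) and the Leray spectral sequence transfers this to $\tx$, and hence to $X$.

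The main obstacle I expect is the second step: producing $h$ on $X$ whose weights are genuinely quasi-psh across $\xs$ in the non-normal case while keeping $\iO{L,h}\geq\gamma$ with $\gamma$ Hermitian on $X$. The positivity is only claimed on $\reg$, which suggests constructing $h$ as $|s|^{-2/k}$-type metrics from sections of $L^{\otimes k}$ on $X$ itself. Such metrics are automatically quasi-psh on $X$, and the strict positivity on $\reg$ would come from the ample part. That is what I would try first.
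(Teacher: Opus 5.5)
Your outline is correct and follows the paper's architecture. You take a resolution $\pi:\tx\to X$ with $\tx$ projective, use bigness of $\pi^*L$ to get a singular positive $\tl h$ on $\tx$, and descend it to $\reg$. You use Grauert--Remmert extension in the normal case. You conclude from Theorem~\ref{Theorem: L2-Dolbeault isomorphism} plus Nadel (resp.\ Nakano--Nadel) vanishing upstairs, with strong openness restoring strict positivity after further blow-ups.

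The one genuine difference is how you treat the weight across $\xs$. The paper simply takes $h=(\pi^{-1})^*\tl h$ on $\reg$; the values on the null set $\xs$ are irrelevant. Its reduction only uses that $\pi^*h$ agrees a.e.\ with the quasi-psh weight of $\tl h$. The identification $\ogr(h)\otimes\cal{O}_X(L)=\pi_*(K_{\tx}\otimes\pi^*L\otimes\scr{I}(\pi^*h))$, the vanishing of $R^q\pi_*$ from \cite{Fuj13}, and the fine resolution all live on $\tx$. So the ``main obstacle'' you single out does not arise in the paper's argument.

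Your pole construction settles it anyway, but you should say why. Since $\pi$ is proper and the weight of $\tl h$ is $\le C+\epsilon\log|s_{\exc}|$ near $\exc$, the descended weight, set to $-\infty$ on $\xs$, is upper semicontinuous. It is psh on $\reg$, and subharmonic on every holomorphic disc, because a disc not contained in $\xs$ meets it discretely. Hence it is weakly psh, and so psh by \cite[Theorem 5.3.1]{FN80}. Running the same argument on $\varphi-\delta\,\iota^*|z|^2$, for a local embedding $\iota$, shows your $h$ is singular positive on all of $X$ without normality. The price is a smaller $\ogr(h)$ than for the paper's metric.

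Three smaller remarks.

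\begin{itemize}
\item The domination of $\gamma$ is not delicate. For any Hermitian $\gamma_X$ on $X$, $\pi^*\gamma_X$ is a smooth form on the compact $\tx$, so $\pi^*\gamma_X\le C\tl\gamma$. Then $\gamma:=\gamma_X/C$ works on all of $\reg$, exactly as in the paper.
\item Drop the fallback of metrics $(\sum_j|s_j|^2)^{-1/k}$ built from sections of $L^{\otimes k}$ on $X$. $X$ has in general no ample bundle; the ample part in Kodaira's lemma lives on $\tx$ and does not descend. Moreover the curvature $\frac1k\Phi_k^*\omega_{FS}$ degenerates wherever the Kodaira map is not an immersion, typically at points of $\reg$. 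So $\iO{L,h}\ge\gamma$ on $\reg$ would fail.
\item If you choose $\tx$ projective at the outset via Theorem~\ref{Theorem: Moishezon-ness}, the extra modification $\mu$ is unnecessary in the singular case. The paper uses the strong-openness perturbation only when $X$ is smooth. There it builds a projective blow-up from a Demailly approximation with algebraic singularities instead of citing Moishezon. If you keep $\mu$, note that $R^q\mu_*=0$ comes from the perturbed metric or from \cite{Fuj13}. It does not come from positivity of $\mu^*\tl h$ relative to $\mu$, since its curvature lower bound $\mu^*\tl\gamma$ degenerates along the fibres.
\end{itemize}
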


As an application, we obtain the following Kawamata-Viehweg type vanishing theorem without assuming conditions on the singularities such as klt, 
or projectivity, in particular the existence of a \kah metric.

\begin{corollary}\label{Corollary: Kawamata-Viehweg type vanishing}
    Let $X$ be a compact Moishezon space of pure dimension $n$ and $L\longrightarrow X$ be a holomorphic line bundle. 
    If $L$ is nef and big, then we have the following vanishing 
    \begin{align*}
        H^q(X,\ogr\otimes\cal{O}_X(L))=0 
    \end{align*}
    for any $q>0$.
    Furthermore, the same vanishing holds after twisting by a holomorphic vector bundle with a smooth Hermitian metric that is Nakano semi-positive on $\reg$.
\end{corollary}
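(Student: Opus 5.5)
We deduce the corollary from Theorem \ref{Theorem: Nadel vanishing on Moishezon} by choosing a metric on $L$ whose multiplier sheaf is trivial, so that $\ogr(h)=\ogr$. The space $X$ is Moishezon by hypothesis; this is also forced by bigness. We pass to a resolution of singularities $\pi:\tx\longrightarrow X$ and work throughout with $\pi^*L$. It is nef, and it is big because $\pi$ is bimeromorphic.

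By Theorem \ref{Theorem: L2-Dolbeault isomorphism}, for any singular metric $h$ on $L$ with quasi-psh local weights,
\begin{align*}
    H^q(X,\ogr(h)\otimes\cal{O}_X(E\otimes L))\cong H^q(\tx,K_{\tx}\otimes\pi^*E\otimes\pi^*L\otimes\scr{I}(\pi^*h)).
\end{align*}
It therefore suffices to produce a metric $h$ on $L$ with two properties. First, the argument of Theorem \ref{Theorem: Nadel vanishing on Moishezon} must go through, namely $\iO{L,h}\geq\gamma$ on $\reg$ for a Hermitian metric $\gamma$ on $X$. Second, we need $\scr{I}(\pi^*h)=\cal{O}_{\tx}$, equivalently $\ogr(h)=\ogr$.

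The construction is the standard nef-and-big trick.
\begin{enumerate}
\item Since $L$ is big, Theorem \ref{Theorem: Nadel vanishing on Moishezon} gives a metric $h_0$ on $L$ with $\iO{L,h_0}\geq\gamma$ on $\reg$, with quasi-psh local weights.
\item Since $L$ is nef, for each $\delta>0$ there is a smooth metric $h_\delta$ on $L$ with $\iO{L,h_\delta}\geq-\delta\gamma$. On a singular compact space we must define nefness via pull-back to $\tx$, or via smooth metrics on $X$ in the sense of local embeddings. We would pick the latter and check that it is consistent with nefness of $\pi^*L$ on the Moishezon manifold $\tx$.
\item For small $\varepsilon>0$, set $h=h_0^{\varepsilon}h_\delta^{1-\varepsilon}$ with $\delta=\varepsilon/2$. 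Then
\begin{align*}
    \iO{L,h}\geq\varepsilon\gamma-(1-\varepsilon)\delta\gamma\geq\tfrac{\varepsilon}{2}\gamma
\end{align*}
on $\reg$.
\item The weight of $\pi^*h$ is $\varepsilon\pi^*\varphi_0$ plus a smooth function. Since $\pi^*\varphi_0$ is quasi-psh on $\tx$, Skoda's lemma (or the strong openness property) gives $\scr{I}(\varepsilon\pi^*\varphi_0)=\cal{O}_{\tx}$ for $\varepsilon$ small. This is where compactness is needed: finitely many charts suffice to bound the Lelong numbers uniformly.
\item Apply Theorem \ref{Theorem: Nadel vanishing on Moishezon}, in the version twisted by $E$, to this $h$.
\end{enumerate}

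The main obstacle is step 2. We need a precise meaning of ``nef'' on a non-normal, non-Kähler compact complex space that yields smooth metrics with arbitrarily small negative curvature relative to a fixed Hermitian $\gamma$ on $X$, rather than merely on $\tx$.

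If this is not available directly on $X$, we would instead run the entire argument on $\tx$, using the isomorphism above. There $\pi^*L$ is nef in the usual sense on a compact manifold, and $\pi^*\gamma$ is only semi-positive, degenerate along $\exc$. Bigness of $\pi^*L$ gives a Kähler current in $c_1(\pi^*L)$. Combining it with the nef approximations as in step 3 gives a metric that is strictly positive modulo $\exc$ with $\scr{I}=\cal{O}$. We would then invoke the Demailly–approximation/strong-openness version of the proof of Theorem \ref{Theorem: Nadel vanishing on Moishezon} on the Moishezon manifold $\tx$ to conclude $H^q(\tx,K_{\tx}\otimes\pi^*(E\otimes L))=0$ for $q>0$. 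The isomorphism then transfers this back to $X$.
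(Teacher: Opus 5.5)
Your fallback (the last paragraph) is a correct proof, and it is essentially the route the paper intends. Apply Theorem \ref{Theorem: L2-Dolbeault isomorphism} to a \emph{smooth} metric on $L$: its weights are trivially quasi-plurisubharmonic, $\ogr(h)=\ogr$ and $\scr{I}(\pi^*h)=\cal{O}_{\tx}$. This gives $H^q(X,\ogr\otimes\cal{O}_X(E\otimes L))\cong H^q(\tx,K_{\tx}\otimes\pi^*E\otimes\pi^*L)$, and you kill the right-hand side on $\tx$ with the nef-and-big trick plus (Nakano--)Nadel vanishing.

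The first route should be dropped rather than repaired.
\begin{itemize}
\item Step 2 is a genuine gap. Nefness is defined numerically here, and on a singular Moishezon space that is not projective there is no ample bundle $A$. So the usual $(L^{m}\otimes A)^{1/m}$ construction of smooth metrics on $X$ with $\iO{L,h_\delta}\geq-\delta\gamma$ is unavailable.
\item Step 1 overstates what Theorem \ref{Theorem: Nadel vanishing on Moishezon} provides. For non-normal $X$ you only get $\iO{L,h_0}\geq\gamma$ on $\reg$, with weights that are quasi-plurisubharmonic after pulling back to $\tx$ but not necessarily on $X$.
\item That theorem is an existence statement about its own metric. So step 5 would in any case mean rerunning its proof, and that proof ends on $\tx$.
\end{itemize}

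Three points to tighten in the fallback.
\begin{itemize}
\item Take $\tx$ projective, using Theorem \ref{Theorem: Moishezon-ness}; this is admissible by Remark \ref{Remark: Thms condition of pi add blow ups}. Then numerical nefness of $\pi^*L$ gives smooth metrics $h_\delta$ with $\iO{\pi^*L,h_\delta}\geq-\delta\tl{\omega}$ for a K\"{a}hler form $\tl{\omega}$ on $\tx$. On a merely Moishezon $\tx$ this is not immediate.
\item Measure against $\tl{\omega}$ rather than $\pi^*\gamma$. Bigness gives a K\"{a}hler current $\iO{\pi^*L,\tl{h}_0}\geq c\,\tl{\omega}$. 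Combined with $h_\delta$ for $\delta=c\varepsilon/2$, the metric $\tl{h}_0^{\varepsilon}h_\delta^{1-\varepsilon}$ is singular positive on all of $\tx$, not just off $\exc$. It has trivial multiplier ideal by Skoda's lemma once $\varepsilon$ is small, since Lelong numbers are bounded on the compact $\tx$. No strong-openness correction along $\exc$ is needed, and Nadel applies directly.
\item For the twist by $E$: $\pi^*h_E$ is smooth on $\tx$. Its Nakano semi-positivity on the dense set $\tx\setminus\exc$ therefore extends to all of $\tx$ by continuity of the curvature, which is what Nakano--Nadel vanishing on $\tx$ requires.
\end{itemize}
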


For a normal complex space $X$, if $X$ or the pair $(X,\Delta)$ is klt, then $X$ has rational singularities and hence $\omega_X=\ogr$; 
therefore, Corollary \ref{Corollary: Kawamata-Viehweg type vanishing} does not require projectivity and gives a generalization of the Kawamata-Viehweg vanishing theorem, where $\omega_X$ is the dualizing sheaf of $X$.

\section{Preliminaries}

\subsection{Canonical desingularization of complex spaces}

Even in the non-compact case, a global resolution of singularities can be obtained, which is locally given by a finite sequence of blow-ups with smooth centers. 
This is achieved by patching together the resolutions of singularities constructed on relatively compact subsets.

\begin{theorem}[{\cite[Theorem\,13.3 and 13.4]{BM97}, cf.\,\cite{Hir64}}]\label{Theorem: canonical desingularization}
    Let $X$ be a complex space which is not necessarily compact or reduced. 
    There exists a desingularization $\pi:\tx\longrightarrow X$, which is a composite of a locally finite sequence of blow-ups, such that 
    \begin{itemize}
        \item the map $\pi:\tx\longrightarrow X$ is proper holomorphic.
        \item the set $\tx$ is smooth and the $\pi$-exceptional set $\exc:=\pi^{-1}(\xs)$ is simple normal crossing, where $\exc$ denotes the collection of all exceptional divisors.
        \item for any relatively compact open subset $V$ of $X$, the restriction $\pi|_V:\tx|_{\pi^{-1}(V)}\longrightarrow X|_V$ is a composite of a finite sequence of blow-ups with smooth centres. 
        \item the restriction $\pi|_{\tx\setminus \exc}:\tx\setminus \exc\longrightarrow X\setminus\xs=\reg$ is biholomorphic.
        \item $\pi$ is canonical in the sense that for any isomorphic $\varphi:X|_U\overset{\cong}{\longrightarrow} X|_V$, where $U$ and $V$ are open subsets of $X$, lifts to an isomorphism $\widetilde{\varphi}:\tx|_{\pi^{-1}(U)}\longrightarrow\tx|_{\pi^{-1}(V)}$.
    \end{itemize}
\end{theorem}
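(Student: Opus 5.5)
This is a cited result, so the plan is to reconstruct it from the Bierstone--Milman machinery of \cite{BM97} rather than to prove resolution from scratch. The key ingredient is the \emph{desingularization invariant} $\mathrm{inv}_X$ of \cite{BM97}. At each point $x\in X$ it takes values in a totally ordered set of finite sequences, and we use four properties of it.
\begin{itemize}
\item It is defined by local data: it begins with the Hilbert--Samuel function of $\cal{O}_{X,x}$ and is computed from a local embedding of $X$ into a manifold, independently of the embedding chosen.
\item It is upper semicontinuous.
\item On every relatively compact open subset it takes only finitely many values, and its well-ordered range guarantees termination.
\item Its maximal locus is a smooth closed subspace, which is taken as the next blow-up centre.
\end{itemize}
Blowing up this centre strictly decreases the maximal value of the invariant over the given compact region. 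Because the invariant is local and intrinsic, the whole algorithm commutes with local isomorphisms: if $\varphi:X|_U\to X|_V$ is an isomorphism, it carries the invariant, hence each centre, to the corresponding ones.

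\textbf{Step 1 (relatively compact pieces).} Fix a relatively compact open $V\Subset X$. Since $\mathrm{inv}_X$ has finitely many values on $\overline V$, iterating ``blow up the maximal stratum'' stops after finitely many steps. At that point the strict transform is smooth and the accumulated exceptional set has simple normal crossings. The centres lie over the locus where the invariant is not that of a smooth point, which is $\xs$. Hence $\pi|_V$ is a finite composite of blow-ups with smooth centres. It is proper, it is biholomorphic over $V\cap\reg$, and it satisfies $\exc=\pi^{-1}(\xs)$ over $V$. The non-reduced case fits in the same framework because the Hilbert--Samuel function already sees the non-reduced structure; we pass to $X_{red}$ when the invariant drops to that of a reduced germ.

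\textbf{Step 2 (gluing).} Exhaust $X$ by $V_1\Subset V_2\Subset\cdots$. Functoriality for the open embeddings $V_k\hookrightarrow V_{k+1}$ shows that the resolution of $V_{k+1}$ restricts over $V_k$ to the resolution of $V_k$. Here a centre over $V_{k+1}$ that misses $V_k$ is simply a trivial step over $V_k$, so we need the reindexing convention of \cite[Section 13]{BM97}. The local resolutions therefore glue to a global $\pi:\tx\to X$. Over any compact set only finitely many blow-ups occur, so the sequence is locally finite and $\pi$ is proper. The remaining bullet properties are local on $X$, so they follow from Step 1.

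\textbf{Step 3 (canonicity).} For an isomorphism $\varphi:X|_U\to X|_V$, we have $\mathrm{inv}_X\circ\varphi=\mathrm{inv}_X$. Therefore $\varphi$ maps the first centre to the first centre and lifts to the first blow-ups. By induction on the steps it lifts to $\widetilde\varphi:\tx|_{\pi^{-1}(U)}\to\tx|_{\pi^{-1}(V)}$; this induction is finite over compacts and glues as in Step 2.

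\textbf{Main obstacle.} The hard point is Step 2. We must ensure that the \emph{indexing} of the blow-up sequence is compatible across the different $V_k$, so that the local sequences really patch into one locally finite sequence rather than merely agreeing up to reordering. I would handle this as in \cite[Theorem 13.4]{BM97}. The idea is to index the blow-ups by the values of the invariant rather than by step number, which makes the $j$-th centre a globally defined closed smooth subspace that is locally finite.
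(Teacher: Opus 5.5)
Your plan matches the paper's, which gives no argument beyond citing \cite{BM97} and remarking that the global resolution is obtained by patching together the canonical resolutions of relatively compact open subsets. Your Steps 1--3 (resolve each $V_k\Subset X$, glue using the compatibility of the Bierstone--Milman process with open embeddings up to trivial blow-ups, and lift isomorphisms step by step via invariance of $\mathrm{inv}_X$) make that patching explicit.

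One aside does not work as written: the non-reduced case. Blow-ups with nowhere dense centres leave a generically non-reduced component generically non-reduced, so the invariant never ``drops to that of a reduced germ''. You should instead resolve $X_{red}$ from the outset, which is harmless for canonicity because isomorphisms $X|_U\cong X|_V$ induce isomorphisms of the reductions, and the paper assumes reducedness throughout anyway.
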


The desingularization in this theorem is referred to as the \textit{canonical} \textit{desingularization}.

\subsection{Singular Hermitian metrics and its positivity on complex spaces}

Let $X$ be a complex space. 

\begin{definition}[{\cite[Chapter\,V, Definition\,1.4]{GPR94}, \cite{Fuj75}}]
    A function $\varphi:X\longrightarrow[-\infty,+\infty)$ is called (resp. \textit{strictly}) \textit{plurisubharmonic} if for any $x\in X$ 
    there exist an open neighborhood $U$ admitting a closed holomorphic embedding $\iota_U:U\hookrightarrow V\subset\bb{C}^N$, here $V$ is an open subset of $\bb{C}^N$, 
    and a (resp. strictly) plurisubharmonic function $\widetilde{\varphi}:V\longrightarrow[-\infty,+\infty)$ such that $\varphi|_U=\widetilde{\varphi}\circ\iota_U$.
\end{definition}

The same requirement is imposed on smooth functions, differential forms, test forms, Hermitian metrics, and \kah metrics on $X$: locally, they are required to be expressible, via a closed holomorphic embedding $\iota_U:U\hookrightarrow V\subset\bb{C}^N$, as the restrictions of the corresponding smooth functions, differential forms, and related objects on defined on $V$.
Moreover, a function is said to be \textit{quasi}-\textit{plurisubharmonic} if it can be written locally as the sum of a smooth function and a plurisubharmonic function.

Let $L\longrightarrow X$ be a holomorphic line bundle. Then there exist an open covering $\{U_\alpha\}_{\alpha\in\Lambda}$ of $X$ and isomorphisms $\iota_\alpha:L|_{U_\alpha}\overset{\cong}{\longrightarrow}U_\alpha\times\bb{C}$. 
Conversely, the set of holomorphic functions $\{f_{\alpha\beta}\}_{\alpha,\beta\in\Lambda}$, where $f_{\alpha\beta}:=\iota_\alpha\circ\iota_\beta^{-1}|_{U_\alpha\cap U_\beta}\in \varGamma(U_\alpha\cap U_\beta,\cal{O}_X)$, defines $L$. 
Such a $(\{f_{\alpha\beta}\},\{U_\alpha\})_{\Lambda}$ is called a \textit{system of transition functions of} $L$.

\begin{definition}[{\cite{Fuj75,Wat25b}}]
    We say that $h$ is \textit{smooth Hermitian metric} on $L$ if for a system of transition functions $(\{f_{\alpha\beta}\},\{U_\alpha\})_{\Lambda}$ of $L$, there is a collection of positive smooth functions $h=\{h_\alpha:U_\alpha\longrightarrow\bb{R}_{>0}\}_{\alpha\in\Lambda}$ such that $h_\beta/h_\alpha=|f_{\alpha\beta}|^2$ on $U_\alpha\cap U_\beta$.
\end{definition}

A \textit{smooth Hermitian metric} on a holomorphic vector bundle is defined similarly.
For any trivialization $\tau:L|_U\overset{\cong}{\longrightarrow}U\times\bb{C}$, a Hermitian metric $h$ on $L$ can be expressed as 
\begin{align*}
    ||\xi||_h=|\tau(\xi)|e^{-\varphi(x)}, \quad x\in U, \,\xi\in L_x
\end{align*}
using a function $\varphi$ on $U$. 
The function $\varphi:U\longrightarrow\bb{R}$ is called the \textit{weight function of} $h$ \textit{with respect to the trivialization} $\tau$.

\begin{definition}[{\cite{Fuj75,Wat25b}}]
    A smooth Hermitian metric $h$ on $L$ is said to be \textit{positive} if there exists a collection of positive smooth functions $h=\{h_\alpha\}_{\alpha\in\Lambda}$ on $L$ such that $-\log h_\alpha$ is strictly plurisubharmonic on $U_\alpha$ for all $\alpha\in\Lambda$. 
    In other words, the weight function of $h$ with respect to any trivialization $\tau$ is smooth strictly plurisubharmonic. 

    A holomorphic line bundle $L\longrightarrow X$ is said to be \textit{positive} if there exists a smooth Hermitian metric $h$ on $L$ which is positive. 
\end{definition}

\begin{definition}\label{Definition: singular Hermitian metrics on cpx sp}
    Let $X$ be a complex space and $L\longrightarrow X$ be a holomorphic line bundle. 
    We say that $h$ is a singular Hermitian metric on $L$ if for any smooth Hermitian metric $h_0$ on $L$, there exists a locally integrable function $\varphi$ on $X$, i.e., $\varphi\in L^1_{loc}(X)$, such that $h=h_0e^{-2\varphi}$ on $X$.
\end{definition}

For any open subset $U\subset X$, we denote 
\begin{align*}
    L^1_{loc}(U):=\biggl\{u:U\longrightarrow\bb{C} \text{ measurable}\,\bigg|\,\int_{K_{reg}}|u|\,dV<+\infty, \forall\,K\Subset U\biggr\}
\end{align*}
and note that, in general, $L^1_{loc}(U)\subsetneq L^1_{loc}(\ureg)$.

\begin{definition}\label{Definition: singular positivity on cpx sp}
    Let $X$ be a complex space and $h$ be a singular Hermitian metric on a holomorphic line bundle $L\longrightarrow X$. We say that a singular Hermitian metric $h$ is 
    \begin{itemize}
        \item \textit{singular} \textit{semi}-\textit{positive} if the weight function of $h$ with respect to any trivialization coincides with a plurisubharmonic function almost everywhere. 
        \item \textit{singular} \textit{positive} if the weight function of $h$ with respect to any trivialization coincides with a strictly plurisubharmonic function almost everywhere. 
    \end{itemize}
\end{definition}

When $X$ is smooth, Definitions \ref{Definition: singular Hermitian metrics on cpx sp} and \ref{Definition: singular positivity on cpx sp} coincide with the existing definitions (see \cite[Chapter 3]{Dem12}, \cite{Wat25a,Wat25b}).

\subsection{Currents and its positivity on complex space}

Let $X$ be a complex space of pure dimension $n$. 
As in the smooth case, the sheaf $\scr{D}'^{\,p,q}_X$ of currents of bidegree $(p,q)$ on $X$ is by definition the dual of $\scr{D}^{\,p,q}_X$ which is the space of smooth differential forms of bidegree $(p,q)$ with compact support. 
Given a local embedding $\iota:U\hookrightarrow \bb{C}^N$, thus, the currents $T\in\scr{D}'^{\,p,q}_X$ precisely correspond, via $T\longmapsto\iota_*T$, to the currents of bidegree $(N-n+p,N-n+q)$ in the ambient space that vanish on all test forms $\phi$ such that $\iota^*\phi=0$ on $\reg$ (see \cite{Dem85,AS12}).
Here, the current $\iota_*T$ is defined by 
\begin{align*}
    \lara{\iota_*T}{\tl{\phi}}:=\lara{T}{\iota^*\tl{\phi}}=\int_{U_{reg}} T\wedge\iota^*\tl{\phi},
\end{align*}
for any test forms $\tl{\phi}$ on $\bb{C}^N$. 
Furthermore, the positivity of the current $T\in\scr{D}'^{\,p,p}_X$ is also defined as in the smooth case, and this is equivalent to the positivity of $\iota_*T$. 
In other words, a current $T\in\scr{D}'^{\,p,p}_X$ is said to be \textit{positive}, i.e., $T\geq0$ \textit{on} $X$ \textit{in the sense of currents}, if $\lara{T}{\phi}\geq0$ for any strongly positive test form $\phi\in\scr{D}^{n-p,n-p}_X(X)$ on $X$, which can be written as $\phi=\rho\cdot i^{(n-p)^2}\eta\wedge\overline{\eta}$ suitable forms $\rho\in\scr{D}(X,\bb{R}_{\geq0})$ and $\eta\in\cal{E}_X^{n-p,0}(X)$.
Here, $\cal{E}_X^{p,q}$ denotes the sheaf of smooth differential forms. 
Note that, if $\varphi\in L^1_{loc}(U)$ then $[\varphi]\in\scr{D}_X'(U)$ and $\iota_*[\varphi]\in\scr{D}'(\bb{C}^N)$; however, in general $\iota_*\varphi$ is not locally integrable. 
Moreover, the form $\idd\varphi$ is defined on $U$ as a current by 
\begin{align*}
    \lara{\idd\varphi}{\phi}:=\int_{U_{reg}}\varphi\,\idd\phi,
\end{align*}
for any test form $\phi\in\scr{D}^{n-1,n-1}(U)$, i.e., $\idd\varphi\in\scr{D}_X'^{\,1,1}(U)$.  
In particular, it follows that the curvature $\iO{L,h}$ associated with a singular Hermitian metric $h$ on $L$ is always well-defined as a current, 
and the current $\iO{L,h}$ always has locally finite mass, i.e., 
\begin{align*}
    ||\iO{L,h}||_{\omega}(K):=\int_K\iO{L,h}\wedge\omega^{n-1}=\sum_{\alpha\in\Lambda}\int_{U_\alpha\cap K}\varphi_\alpha\,\idd\omega^{n-1}<+\infty,
\end{align*}
for any Hermitian metric $\omega$ and any compact subset $K\Subset X$, here $\varphi_\alpha\in L^1_{loc}(U_\alpha)$ is local weight function of $h$.
Therefore, the following holds by the Skoda-El Mir extension theorem (see \cite[Chapter III, Theorem 2.3]{Dem-book}, \cite[Theorem 1.18]{Dem12}).

\begin{proposition}
    Let $X$ be a complex space of pure dimension, $\pi:\tl{X}\longrightarrow X$ be a canonical desingularization and $\exc$ be the $\pi$-exceptional set.
    Let $L\longrightarrow X$ be a holomorphic line bundle with a singular Hermitian metric $h$. 
    If each local weight function of $h$ near $\xs$ is quasi-plurisubharmonic, then the pullback $\pi^*\iO{L,h}$, which is defined as a current on $\tx\setminus \exc$, has locally bounded mass near $\exc$ and admits a trivial extension to $\tx$.
    Hence, it coincides with the curvature current $\iO{\pi^*L,\pi^*h}$ on $\tx$ associated with the pulled-back metric $\pi^*h$ in the sense of currents, i.e., $\pi^*\iO{L,h}=\iO{\pi^*L,\pi^*h}$ on $\tx$. 
\end{proposition}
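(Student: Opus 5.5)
The plan is to work locally near a point of $\exc$, reduce everything to the quasi-plurisubharmonic weight pulled back to the smooth manifold $\tx$, and then compare the two currents on $\tx$ through their common restriction to $\tx\setminus\exc$.

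\textbf{Local reduction and mass bound.} Fix $x_0\in\xs$ and a neighbourhood $U\ni x_0$ on which $L$ is trivial and $h=e^{-2\varphi}$, with $\varphi=\psi+\rho$, where $\psi$ is plurisubharmonic and $\rho$ is smooth. Both are restrictions of ambient objects $\tl\psi$ and $\tl\rho$ under a closed embedding $\iota:U\hookrightarrow V\subset\bb{C}^N$.
\begin{itemize}
\item The composite $\iota\circ\pi$ is holomorphic on $\pi^{-1}(U)$. Hence $\pi^*\psi=\tl\psi\circ\iota\circ\pi$ is plurisubharmonic (or identically $-\infty$ on a component, which is excluded since $\varphi\in L^1_{loc}$), and $\pi^*\rho$ is smooth on $\pi^{-1}(U)$.
\item Therefore $\pi^*\varphi$ is quasi-plurisubharmonic, hence in $L^1_{loc}(\pi^{-1}(U))$.
\item Choose a Hermitian metric $\tl\omega$ on $\tx$ and a constant $C$ with $\idd\pi^*\rho\geq -C\tl\omega$ on a relatively compact piece of $\pi^{-1}(U)$. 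Then $T:=\pi^*\iO{L,h}+C\tl\omega$ is a closed positive $(1,1)$-current on $\pi^{-1}(U)\setminus\exc$.
\item Its mass near $\exc$ equals $\int\pi^*\varphi\,\idd(\chi\tl\omega^{\,n-1})$ plus a smooth term, for cut-offs $\chi$. This is finite because $\pi^*\varphi\in L^1_{loc}$ across $\exc$. Consequently $T$ has locally bounded mass near $\exc$.
\end{itemize}

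\textbf{Trivial extension.} Since $\exc$ is an analytic (simple normal crossing) subset, the Skoda--El Mir theorem applies to the closed positive current $T$. It shows that the trivial extension $\widetilde T$ of $T$ across $\exc$ is closed and positive. Subtracting the smooth form $C\tl\omega$, the trivial extension of $\pi^*\iO{L,h}$ is a well-defined closed $(1,1)$-current on $\tx$. These local extensions are compatible on overlaps because trivial extension is defined intrinsically by integration over $\tx\setminus\exc$. This gives the first two assertions.

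\textbf{Identification with $\iO{\pi^*L,\pi^*h}$ (main obstacle).} The current $\iO{\pi^*L,\pi^*h}=\idd\pi^*\varphi$ (up to the factor fixed by the paper's normalisation) is defined on all of $\tx$, because $\pi^*\varphi$ is locally integrable there. It agrees with $\pi^*\iO{L,h}$ on $\tx\setminus\exc$, where $\pi$ is biholomorphic onto $\reg$. Hence the difference $S:=\iO{\pi^*L,\pi^*h}-\widetilde{T}+C\tl\omega$ is a closed $(1,1)$-current supported on the divisor $\exc=\bigcup_j E_j$. It is of order zero, being a difference of currents that are positive modulo smooth forms. By the support theorem for normal currents, $S=\sum_j c_j[E_j]$, and comparing with Siu's decomposition gives $c_j=\nu(\pi^*\varphi,E_j)$, the generic Lelong number of $\pi^*\varphi$ along $E_j$.

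The equality $\pi^*\iO{L,h}=\iO{\pi^*L,\pi^*h}$ therefore reduces to the vanishing of these generic Lelong numbers, i.e.\ to the claim that the pulled-back weight puts no divisorial mass on $\exc$. This is the step I expect to be the real difficulty. My first attempt would be to use that $\varphi$ is $L^1_{loc}$ on $X$ with respect to the volume of $\reg$, together with the explicit blow-up structure of $\pi$ over relatively compact sets given by Theorem~\ref{Theorem: canonical desingularization}, to bound $\nu(\pi^*\varphi,E_j)$ at generic points of each $E_j$.

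If this bound cannot be obtained, the equality should instead be read as follows: $\pi^*\iO{L,h}$ on $\tx$ is defined as $\idd\pi^*\varphi$, which is automatic by the local computation above. Its restriction to $\tx\setminus\exc$ then has locally finite mass, as shown in the first step.
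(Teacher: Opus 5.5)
\textbf{The gap is the identification step, and it cannot be closed.} The asserted identity $\pi^*\iO{L,h}=\iO{\pi^*L,\pi^*h}$, with the left side meaning the trivial extension from $\tx\setminus\exc$, is false in general. Your own reduction shows why. By the support theorem and Siu's decomposition, the trivial extension equals $\iO{\pi^*L,\pi^*h}-c\sum_j\nu(\pi^*\varphi,E_j)[E_j]$ for a normalization constant $c>0$. Local integrability of $\varphi$ places no restriction on these generic Lelong numbers, so the bound you planned to prove does not exist.

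A concrete counterexample:
\begin{itemize}
\item Take $X=\{z_1z_2=z_3^2\}\subset\bb{C}^3$, $L$ trivial, and $h=e^{-2\varphi}$ with $\varphi=\log|z|$ restricted to $X$. This $\varphi$ is plurisubharmonic, hence quasi-plurisubharmonic and $L^1_{loc}$.
\item Let $\pi$ be the blow-up of the vertex; it resolves $X$ with $\exc=E\cong\bb{P}^1$.
\item In the chart $z=(w_1,w_1w_2,w_1w_3)$, the strict transform is $\{w_2=w_3^2\}$ and $E=\{w_1=0\}$. There $\pi^*\varphi=\log|w_1|+\tfrac12\log(1+|w_2|^2+|w_3|^2)$.
\item Hence $\nu(\pi^*\varphi,E)=1$, and $\iO{\pi^*L,\pi^*h}$ exceeds the trivial extension by a nonzero positive multiple of $[E]$.
\end{itemize}
Any resolution behaves the same way, since all $z_i\circ\pi$ vanish on the exceptional curves over the vertex. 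A one-dimensional instance is the node $\{z_1z_2=0\}\subset\bb{C}^2$ with its normalization: the trivial extension is $0$, but $\iO{\pi^*L,\pi^*h}$ has point masses over the node.

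\textbf{Comparison with the paper.} The paper's justification is locally finite mass of $\iO{L,h}$ on $X$ plus Skoda--El Mir. That only produces the trivial extension; the paper then identifies it with $\iO{\pi^*L,\pi^*h}$ without addressing this divisorial term. What is actually true is the formula above. So equality holds if and only if all generic Lelong numbers of $\pi^*\varphi$ along the components of $\exc$ vanish, for example when the weights are locally bounded near $\xs$.

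\textbf{Your first two steps.} These are fine. Your mass bound through the quasi-plurisubharmonic function $\pi^*\varphi$ on $\tx$ is sounder than the paper's. Mass measured against $\pi^*\omega$, which degenerates along $\exc$, does not by itself control mass with respect to a Hermitian metric on $\tx$. One correction: the mass of $T$ on $\tx\setminus\exc$ is only bounded \emph{above} by $\int\pi^*\varphi\,\idd(\chi\tl\omega^{\,n-1})$ plus a smooth term, not equal to it. The defect is exactly the mass that $\idd\pi^*\varphi$ carries on $\exc$.

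\textbf{Your fallback.} Reading $\pi^*\iO{L,h}$ as $\idd\pi^*\varphi$ on $\tx$ is a consistent convention. But that current is not the trivial extension, so it changes the statement rather than proving it.
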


Note that, in general, it is unclear whether $dd^cu$ can be defined on $U$ as a current under the only assumption that $u\in L^1_{loc}(\ureg)$.
Here, a closed $(1,1)$-current $T$ on a complex space $X$ always admits local potentials on $\reg$, but it does not necessarily admit local potentials on the whole space $X$. 
For instance, this occurs for the integration current associated with an effective Weil divisor that is not $\bb{Q}$-Cartier.

\subsection{Multiplier ideal sheaves and the strong openness property}

In this subsection, we consider the smooth case, and let $X$ be a complex manifold.

\begin{definition}
    Let $\varphi:X\longrightarrow[-\infty,+\infty)$ be a quasi-plurisubharmonic function.
    The \textit{multiplier} \textit{ideal} \textit{sheaf} $\scr{I}(\varphi)\subset\cal{O}_X$ is defined by 
    \begin{align*}
        \scr{I}(\varphi)(U):=\{f\in\cal{O}_X(U)\mid |f|^2 e^{-2\varphi}\in L^1_{loc}(U)\}
    \end{align*}
    for any open subset $U\subset X$. For a singular Hermitian metric $h$ on $L$ with the local weight $\varphi$, i.e., $h=h_0e^{-2\varphi}$, we define the multiplier ideal sheaf of $h$ by $\scr{I}(h):=\scr{I}(\varphi)$.
\end{definition}

We introduce the strong openness property, which is also important for applications.

\begin{theorem}[\textnormal{Strong openness property, cf. \cite{GZ15}}]\label{Theorem: strong openness property}
    Let $\varphi$ be a negative plurisubharmonic function on $\Delta^n\subset\bb{C}^n$, and let $\psi\not\equiv-\infty$ be a negative plurisubharmonic function on $\Delta^n$. 
    Then we have $\scr{I}(\varphi)=\bigcup_{\varepsilon>0}\scr{I}(\varphi+\varepsilon\psi)$.
\end{theorem}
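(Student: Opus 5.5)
The inclusion $\bigcup_{\varepsilon>0}\scr{I}(\varphi+\varepsilon\psi)\subset\scr{I}(\varphi)$ is immediate. Since $\psi<0$, we have $e^{-2(\varphi+\varepsilon\psi)}\geq e^{-2\varphi}$, so $|f|^2e^{-2\varphi}$ is locally integrable whenever $|f|^2e^{-2(\varphi+\varepsilon\psi)}$ is. All the content is in the reverse inclusion.

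The plan is to reduce the reverse inclusion to the classical statement $\scr{I}(\varphi)=\scr{I}_+(\varphi):=\bigcup_{\delta>0}\scr{I}((1+\delta)\varphi)$ (the Demailly strong openness conjecture), and then add the twist by $\psi$ using H\"older's inequality and Skoda's integrability theorem.

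\textbf{Reduction.} Assume for the moment that $\scr{I}(\varphi)=\scr{I}_+(\varphi)$, and fix a germ $f\in\scr{I}(\varphi)_x$. Choose $\delta>0$ and a small ball $B\ni x$ with $\int_B|f|^2e^{-2(1+\delta)\varphi}<\infty$.
\begin{itemize}
\item Since $\psi\not\equiv-\infty$, its Lelong numbers are finite.
\item By upper semicontinuity they are bounded by some $c<\infty$ on $\overline{B}$.
\item Skoda's theorem then gives $e^{-2s\psi}\in L^1(B)$ for $0<s<1/c$, after shrinking $B$.
\end{itemize}
Now write
\begin{align*}
|f|^2e^{-2\varphi-2\varepsilon\psi}=\bigl(|f|^2e^{-2(1+\delta)\varphi}\bigr)^{\frac{1}{1+\delta}}\cdot|f|^{\frac{2\delta}{1+\delta}}e^{-2\varepsilon\psi}
\end{align*}
and apply H\"older with exponents $1+\delta$ and $(1+\delta)/\delta$. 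Because $f$ is bounded on $B$, the result is integrable as soon as $e^{-2\varepsilon\frac{1+\delta}{\delta}\psi}\in L^1(B)$. This holds once $\varepsilon(1+\delta)/\delta<1/c$, so $f\in\scr{I}(\varphi+\varepsilon\psi)_x$.

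\textbf{Core step: $\scr{I}(\varphi)=\scr{I}_+(\varphi)$.} I would follow the Guan--Zhou strategy via the Ohsawa--Takegoshi extension theorem with optimal constant.
\begin{itemize}
\item Since $\scr{I}_+(\varphi)$ is coherent, the increasing family $\scr{I}((1+\delta)\varphi)$ stabilizes by Noetherianity. So it suffices to take $F\in\scr{I}(\varphi)_0$ with $F\notin\scr{I}_+(\varphi)_0$ and derive a contradiction.
\item The key estimate is a lower bound on sublevel sets: for all $t\gg0$,
\begin{align*}
\int_{\{\varphi<-t\}\cap B}|F|^2\,d\lambda\geq C\,e^{-2t}
\end{align*}
with $C>0$ independent of $t$.
\item To obtain it, suppose it fails along a sequence $t_j\to\infty$. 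Apply the optimal-constant $L^2$-extension theorem (in the version of Guan--Zhou, or Berndtsson--Lempert) with a weight built from $\max(\varphi,-t_j)$. This produces holomorphic $F_j$ with $F-F_j\in\scr{I}_+(\varphi)$ and $\int|F_j|^2e^{-2\varphi}$-type bounds tending to $0$.
\item By the closedness of coherent ideals under locally uniform limits, this forces $F\in\scr{I}_+(\varphi)$, a contradiction.
\item Given the lower bound, the layer-cake formula yields
\begin{align*}
\int_B|F|^2e^{-2\varphi}\gtrsim\int^{\infty}e^{2t}\cdot e^{-2t}\,dt=\infty,
\end{align*}
contradicting $F\in\scr{I}(\varphi)$.
\end{itemize}

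\textbf{Main obstacle.} I expect the lower bound on sublevel sets to be the hardest step. It genuinely needs the sharp constant in the $L^2$-extension theorem, since a non-optimal constant only yields a bound of the form $Ce^{-2at}$ with $a<1$, which is not enough for divergence. An alternative route I would keep in reserve is Hiep's argument by induction on dimension. It combines the $L^2$-extension from hyperplanes $\{z_n=c\}$ with a Fubini argument and the one-variable case, which follows from Siu's decomposition and Skoda-type estimates.
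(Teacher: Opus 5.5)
Your trivial inclusion is correct, and so is your H\"older--Skoda reduction of the $\psi$-twisted statement to $\scr{I}(\varphi)=\scr{I}_+(\varphi)$. The paper gives no argument of its own here; it only cites \cite{GZ15}. If you likewise cite $\scr{I}(\varphi)=\scr{I}_+(\varphi)$ from there, your proof is complete.

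Your sketch of that core step, however, does not close. To get $F-F_j\in\scr{I}_+(\varphi)=\scr{I}(p_0\varphi)$ from an $L^2$-extension theorem, the weight plus the polar function must carry the singularity $2p_0\varphi$ with $p_0>1$. Take trivial weight and polar function $2p_0\varphi$, cut off at the level where the sublevel set is $\{\varphi<-t\}$. The error term is then of order $e^{2p_0t}\int_{\{\varphi<-t\}}|F|^2$. Your failure hypothesis $\int_{\{\varphi<-t_j\}}|F|^2=o(e^{-2t_j})$ only makes this $o(e^{2(p_0-1)t_j})$, which need not tend to $0$. Moving part of the singularity into the weight does not help, since you always lose a factor $e^{2(p_0-1)t}$.

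Conversely, suppose you match the rate $e^{-2t}$ by taking polar function $2\varphi$. Then the extension only yields $F-F_j\in\scr{I}(\varphi)$, which is vacuous because $F\in\scr{I}(\varphi)$. That variant does prove $\int_{\{\varphi<-t\}}|F|^2\gtrsim e^{-2t}$ when $F\notin\scr{I}(\varphi)$, but that is not your situation. So the obstruction is this mismatch of exponents ($p_0$ versus $1$). It is not the size of the extension constant, which only enters multiplicatively; your diagnosis about the sharp constant is therefore off.

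The missing idea is to use the Noetherian stabilization uniformly in the exponent rather than to look for a sharp decay rate. Since $\scr{I}(p\varphi)_0=\scr{I}_+(\varphi)_0$ for every $p\in(1,p_0]$, the quantity $C_F:=\inf\{\int|\tilde F|^2e^{-2\varphi}:\tilde F-F\in\scr{I}_+(\varphi)_0\}$ does not depend on $p$. It is positive for $F\in\scr{I}(\varphi)_0\setminus\scr{I}_+(\varphi)_0$ by closedness of ideals under locally uniform limits.

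Now apply a Guan--Zhou type extension lemma with the following data:
\begin{itemize}
\item weight $e^{-2\varphi}$, which is exactly where $F\in\scr{I}(\varphi)$ enters;
\item polar function $2(p-1)\varphi$;
\item a constant independent of $p$ and of the level, evaluated at a fixed level.
\end{itemize}
This gives $C_F\le C'\int_{\{\varphi<-1/(2(p-1))\}}|F|^2e^{-2\varphi}$. The right-hand side tends to $0$ as $p\to1^+$ by dominated convergence, which is a contradiction. No sharp sublevel-set bound and no layer-cake formula are needed.

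The same argument with polar function $2\varepsilon\psi$ and $\varepsilon\to0$ proves the stated twisted version directly. It uses only that $\{\psi=-\infty\}$ is Lebesgue-null, so your H\"older--Skoda reduction becomes optional.
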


In the presence of singularities, it is not clear whether a multiplier ideal sheaf defined as in the smooth case satisfies the desired properties. 
On a complex space, taking singularities into account via Ohsawa's extension measure, the \textit{Nadel}-\textit{type} \textit{multiplier ideal sheaf} $\widehat{\scr{I}}(\varphi)$ (see \cite[Definition 3.2]{Li24}), which are known to satisfy the strong openness property (see \cite[Theorem 1.1]{Li24}), are introduced.
Recall that if the complex space $X$ is locally a complete intersection (not necessarily normal), then $\ogr(\varphi)=\omega_X\otimes\widehat{\scr{I}}(\varphi)$ (see \cite[Remark 3.3 (2)]{Li24}). 

\section{Positivity of curvature currents and bigness on compact space}

Singular positivity admits the following reformulation in terms of curvature currents. 

\begin{proposition}\label{Proposition: curvature condition of sHm}
    Let $X$ be a complex space of pure dimension $n$ and $L\longrightarrow X$ be a holomorphic line bundle with a singular Hermitian metric $h$. 
    We obtain the following. 
    \begin{itemize}
        \item If $h$ is singular semi-positive, then $\iO{L,h}\geq0$ on $X$ in the sense of currents. 
        \item If $h$ is singular positive, then for any Hermitian metric $\omega$ on $X$, there exists a positive smooth function $\varepsilon:X\longrightarrow\bb{R}_{>0}$ such that $\iO{L,h}\geq\varepsilon\omega$ on $X$ in the sense of currents. 
    \end{itemize}
    Furthermore, the converse also holds if $X$ is normal, or if any weight function is locally bounded from above in a neighborhood of $\xs$ and $X$ is locally irreducible.
\end{proposition}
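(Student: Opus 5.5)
For the two forward implications I will work locally. Fix $x\in X$, a neighbourhood $U$, a closed embedding $\iota_U:U\hookrightarrow V\subset\bb{C}^N$, and a trivialization of $L|_U$. Let $\varphi$ be the weight of $h$ there. By hypothesis $\varphi=\tl{\varphi}\circ\iota_U$ almost everywhere on $U_{reg}$, where $\tl{\varphi}$ is plurisubharmonic (resp. strictly plurisubharmonic) on $V$. The curvature current on $U$ is then given by
\begin{align*}
    \lara{\iO{L,h}}{\phi}=\frac{1}{\pi}\int_{U_{reg}}\varphi\,\idd\phi .
\end{align*}
Here I use the paper's normalization, up to the harmless constant. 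Changing $\varphi$ on a null set does not affect this integral, so I may replace $\varphi$ by $\tl{\varphi}\circ\iota_U$.

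\textbf{Regularization.} Shrinking $V$, take smooth plurisubharmonic $\tl{\varphi}_\nu=\tl{\varphi}*\rho_\nu$ decreasing to $\tl{\varphi}$. Their restrictions $\varphi_\nu=\tl{\varphi}_\nu\circ\iota_U$ are smooth plurisubharmonic on $U$. Hence $\idd\varphi_\nu$ is a smooth semipositive $(1,1)$-form on $U_{reg}$, being the pullback of a semipositive form. For a strongly positive test form $\phi=\rho\, i^{(n-1)^2}\eta\wedge\overline{\eta}$, Stokes on $U_{reg}$ gives
\begin{align*}
    \int_{U_{reg}}\varphi_\nu\,\idd\phi=\int_{U_{reg}}\idd\varphi_\nu\wedge\phi\ \ge 0 .
\end{align*}
This step is justified because $\phi$ is the restriction of an ambient test form, so $\lara{\iota_*[U]}{\idd(\tl{\varphi}_\nu\tl{\phi})}$-type identities hold: the integration current $[U_{reg}]$ is closed by Lelong's theorem.

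\textbf{Passing to the limit.} By monotone convergence ($\varphi_\nu\downarrow\varphi$, with $\varphi_1$ bounded above on $\mathrm{supp}\,\phi$ and $\varphi\in L^1_{loc}(U)$), the left side tends to $\int\varphi\,\idd\phi$. This yields $\iO{L,h}\ge0$.

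\textbf{Strict positivity.} In the strictly plurisubharmonic case, locally $\tl{\varphi}-c|z|^2$ is plurisubharmonic for some $c>0$. So the argument above gives $\iO{L,h}\ge c\,\iota_U^*\idd|z|^2$. On the other hand, any Hermitian metric satisfies $\omega\le C\,\iota_U^*\idd|z|^2$ on $U_{reg}$ locally, since $\omega$ is the restriction of an ambient Hermitian metric. Thus $\iO{L,h}\ge (c/C)\,\omega$ locally. A partition of unity, taking the minimum of local constants over a locally finite cover, produces a smooth positive $\varepsilon$; the minimum can be smoothed from below.

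\textbf{Converse.} Suppose $\iO{L,h}\ge0$ (resp. $\ge\varepsilon\omega$). On $U_{reg}$ this says $\varphi$ (resp. $\varphi-\delta\,\iota^*|z|^2$ for small $\delta$, chosen using $\omega\ge c'\iota^*\idd|z|^2$) is almost everywhere equal to a plurisubharmonic function $\psi$ on $U_{reg}$. This is the classical smooth fact: a distribution with positive $\idd$ is a psh function.

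\begin{itemize}
    \item[(i)] \emph{Locally bounded case.} If $\psi$ is locally bounded above near $\xs$, then $\psi$ extends as a weakly plurisubharmonic function across the analytic set $\xs$, by Grauert–Remmert's Riemann extension theorem for psh functions. Local irreducibility is exactly what Fornæss–Narasimhan needs to conclude that this extension (via upper semicontinuous regularization $\limsup$) is plurisubharmonic in the sense of the Definition, i.e. locally a restriction of an ambient psh function.
    \item[(ii)] \emph{Normal case.} I need local boundedness from above. I would argue that $\iO{L,h}\ge0$ as a current on all of $X$, not merely on $\reg$, implies $\varphi$ is bounded above near $\xs$. To do so, pass to a resolution $\pi$: $\pi^*\psi$ is quasi-psh off $\exc$ with curvature of bounded mass. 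On a normal space, $\pi$ has connected fibres, and $\pi^*\psi$ extends psh across $\exc$ by the Skoda–El Mir extension argument of the preceding Proposition together with the positivity on $X$. It is therefore bounded above on the compact fibres, so $\psi$ is locally bounded above. Then (i) applies, using that normal implies locally irreducible.
\end{itemize}

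\textbf{Main obstacle.} I expect the converse in the normal case to be hardest, specifically showing that $\pi^*\psi$ extends plurisubharmonically across $\exc$ rather than only as a current with possible negative divisorial part. The positivity of $\iO{L,h}$ tested against forms on $X$ is what must rule out such a part. If this proves difficult, I would first try pushing test forms from $\tx$ down, using that $\pi_*$ of positive forms supported near fibres are admissible tests.
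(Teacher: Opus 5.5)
\textbf{Assessment.} The forward implications are correct but argued differently from the paper. The paper pulls back to the canonical desingularization, where $\pi^*\varphi$ is psh, and evaluates $\lara{\iO{\pi^*L,\pi^*h}}{\pi^*\phi}\geq0$. You instead regularize in $\bb{C}^N$ and integrate by parts on $\ureg$ using the closedness of $[U]$, which avoids resolutions. Case (i) of the converse matches the paper's chain: Grauert--Remmert extension of a psh function bounded above, then Forn{\ae}ss--Narasimhan. One small correction there: local irreducibility is needed so that the $\limsup$-extension is psh along discs through $\xs$ (take $0$ and $-1$ on the two branches of $\{zw=0\}$). It is not a hypothesis of Forn{\ae}ss--Narasimhan's theorem.

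\textbf{The gap: case (ii), $X$ normal.} You try to get an upper bound for $\psi$ near $\xs$ from a psh extension of $\pi^*\psi$ across $\exc$, using Skoda--El Mir, connected fibres and positivity on $X$. None of these supplies it.
\begin{itemize}
\item The local mass bound near $\exc$ is not available. The paper's Skoda--El Mir proposition assumes the weights are already quasi-psh near $\xs$, so invoking it here is circular.
\item Even granting finite mass, Skoda--El Mir extends only the current $\idd\pi^*\psi$, not the function. Take $s$ a local equation of a component of $\exc$ and $c>0$: then $u$ and $u-c\log|s|^2$ have the same $\idd$ off $\exc$. Extending a psh function across a hypersurface requires exactly the upper bound you are after.
\item Connectedness of fibres only helps once an extension already exists.
\item You never say how testing $\iO{L,h}$ against forms on $X$ excludes the negative divisorial part.
\item Your fallback also fails: $\pi_*$ of a smooth form on $\tx$ is in general not locally the restriction of an ambient smooth form, so it is not an admissible test form on $X$.
\end{itemize}

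\textbf{The missing idea is codimension, not positivity on $X$.} On a normal space $\rom{codim}\,\xs\geq2$. Grauert--Remmert's extension theorem across analytic sets of codimension $\geq2$ in normal spaces (the Satz 4 cited in the paper) extends any psh function on $\ureg$ uniquely to $U$, with no boundedness assumption. Forn{\ae}ss--Narasimhan then finishes as in (i).

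The upper bound comes for free. Take a local finite branched covering $p:U\to D\subset\bb{C}^n$ and set $z\mapsto\max_{p(x)=z}\psi(x)$.
\begin{itemize}
\item This function is psh on $D\setminus p(\xs)$. Near branch points off $p(\xs)$, all fibre points lie in $\reg$, so the maximum is locally bounded above and extends across the branch hypersurface.
\item $p(\xs)$ has codimension $\geq2$ in $D$, so the function extends psh to all of $D$ and bounds $\psi$ from above.
\end{itemize}
So in the normal case positivity of $\iO{L,h}$ on $\reg$ alone suffices. The paper uses the same fact in Section 3 when it modifies $h$ across $\xs$.
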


\begin{proof}
    Since positivity is a local property, it suffices to choose an arbitrary neighborhood $U$ admitting a trivialization $\tau:L|_U\overset{\simeq}{\longrightarrow}U\times\bb{C}$ and a holomorphic embedding $\iota:U\hookrightarrow\bb{C}^N$, and to show that $\lara{\iO{L,h}}{\phi}\geq0$
    for any strongly positive test form $\phi\in\scr{D}_X^{n-1,n-1}(U)$, which can be written as $\phi=\rho\cdot i^{(n-1)^2}\eta\wedge\overline{\eta}$ for suitable forms $\rho\in\scr{D}(X,\bb{R}_{\geq0})$ and $\eta\in\cal{E}_X^{n-1,0}(U)$. 
    Let $\varphi$ be a weight function of $h$ with respect to a trivialization $\tau$ and $\pi:\tx\longrightarrow X$ be a canonical desingularization. 
    Since plurisubharmonicity is preserved under holomorphic maps (see \cite{GR56}), the curvature current $\iO{\pi^*L,\pi^*h}\geq0$ holds on $\tx$. 
    Therefore, we obtain 
    \begin{align*}
        \lara{\iO{L,h}}{\phi}&=\int_{\ureg} \idd\varphi\wedge\phi=\int_{\pi^{-1}(\ureg)}\pi^*\idd\varphi\wedge\pi^*\phi=\int_{\pi^{-1}(\ureg)}\idd\pi^*\varphi\wedge\pi^*\phi\\
        &=\int_{\pi^{-1}(\ureg)}\iO{\pi^*L,\pi^*h}\wedge\pi^*\phi=\lara{\iO{\pi^*L,\pi^*h}}{\pi^*\phi}\geq0.
    \end{align*}

    Conversely, assume that $\iO{L,h}\geq0$ on $X$. 
    Then, for any strongly positive test form $\tl{\phi}\in\scr{D}_{\tl{X}}^{n-1,n-1}(\pi^{-1}(\ureg))$ on $\pi^{-1}(\ureg)=\pi^{-1}(U)\setminus \exc$, the pullback $(\pi^{-1})^*\tl{\phi}\in\scr{D}_X^{n-1,n-1}(\ureg)$ is also strongly positive test form on $\ureg$, and hence the following holds. 
    \begin{align*}
        0\leq\lara{\iO{L,h}}{(\pi^{-1})^*\tl{\phi}}=\int_{\pi^{-1}(\ureg)}\iO{\pi^*L,\pi^*h}\wedge\tl{\phi}=\lara{\iO{\pi^*L,\pi^*h}}{\tl{\phi}}.
    \end{align*}
    Therefore, we have $\iO{\pi^*L,\pi^*h}\geq0$ on $\tx\setminus \exc$ in the sense of currents; in particular, the local weight function $\pi^*\varphi$, viewed as a local potential, is plurisubharmonic on $\pi^{-1}(\ureg)$. 
    Since $\pi^{-1}$ is holomorphic on $\ureg$, the weight function $\varphi$ is also plurisubharmonic on $\ureg$. 
    Furthermore, by \cite[Satz 3 and 4]{GR56} and \cite[Theorem 1.10]{Dem85}, if $X$ is normal, or if $\varphi$ is locally bounded from above on $U$, then $\varphi$ extends uniquely as a weakly plurisubharmonic function on $U$. 
    Finally, it follows from \cite[Theorem 5.3.1]{FN80} that every weakly plurisubharmonic function is plurisubharmonic.

    Second, we assume that $h$ is singular positive. 
    Since $X$ is second countable, it admits an increasing sequence $\{X_j\}_{j\in\bb{N}}$ of relatively compact open subsets, i.e., $X_j\Subset X_{j+1}$ for any $j$ and $\bigcup_{j\in\bb{N}}X_j=X$. 
    By compactness of $\overline{X_j}$, there exists a finite open covering $\{U_k\}_{1\leq k\leq N}$ such that $\overline{X_j}\subset\bigcup^N_{k=1}U_k$, $L|_{U_k}$ is trivial and each $U_k$ admits a holomorphic embedding $\iota_k:U_k\hookrightarrow\bb{C}^{N_k}$.
    Here, the weight of $h$ on each $U_k$ coincides with a strictly plurisubharmonic function $\varphi_k$ a.e., and for each embedding $\iota_k$, there exists a strictly plurisubharmonic function $\widetilde{\varphi}_k$ on $V_k$ and a Hermitian metric $\widetilde{\omega}_k$ on $V_k$ 
    such that $\varphi_k|_{U_k}=\iota_k^*\widetilde{\varphi}_k$ and $\omega|_{U_k}=\iota_k^*\widetilde{\omega}_k$ on $U_k$, where $V_k\Subset\bb{C}^{N_k}$ is open subsets with $\iota_k(U_k)\Subset V_k$.
    By strictly plurisubharmonicity of $\widetilde{\varphi}_k$, there exists a constant $\delta_k>0$ such that $\idd\widetilde{\varphi}_k\geq \delta_k\idd\sum^{N_k}_{j=1}|z_j^{(k)}|^2=\delta_k\idd|z^{(k)}|^2$ in the sense of currents on $V_k$ for a standard coordinates $(z_1^{(k)},\ldots,z_{N_k}^{(k)})$ of $\bb{C}^{N_k}$, 
    then we have $\iO{L,h}=\idd\varphi_k\geq\delta_k\iota_k^*\idd|z^{(k)}|^2$ on $U_k$ in the sense of currents. Here, there exists a constant $c_k>0$ such that $\idd|z^{(k)}|^2\geq c_k\widetilde{\omega}_k$ on $V_k$, hence $\iota_k^*\idd|z^{(k)}|^2\geq c_k\omega$ on $U_k$. 
    Combined with the above inequality, setting $\varepsilon_{U_k}:=\delta_kc_k>0$, we obtain $\iO{L,h}=\idd\varphi_k\geq \varepsilon_{U_k}\omega$ on $U_k$ in the sense of currents. 
    Let $\varepsilon_j:=\min_k\varepsilon_{U_k}>0$, then we can construct a smooth function $\varepsilon:X\longrightarrow\bb{R}_{>0}$ satisfying $\varepsilon_j>\varepsilon>0$ on $X_j\setminus\overline{X}_{j-1}$. 
    The converse is clear from the above proof.
\end{proof}

Let $X$ be a complex space and $L\longrightarrow X$ be a holomorphic line bundle. 
We define $L$ to be \textit{big} if the Kodaira-Iitaka dimension of $L$ is maximal on each irreducible component $X_j$, i.e., $\kappa(X_j,L|_{X_j})=\dim X_j$. 
Here, the Kodaira-Iitaka dimension is defined in the same way without assuming compactness (see \cite[Definition 2.4]{Wat24}).
A compact complex space $X$ is said to be \textit{Moishezon} if the algebraic dimension of each irreducible component $X_j$ is maximal (cf. \cite{Moi66}), i.e., $a(X_j)=\dim X_j$.
A Moishezon space is bimeromorphic to a projective manifold, as follows:

\begin{theorem}[{cf. \cite{Moi66,Hir64,GR70}, \cite[Chapter VII, Corollary 6.10, Theorem 6.14 and Proposition 6.16]{GPR94}}]\label{Theorem: Moishezon-ness}
    Let $X$ be a compact complex space. If $X$ is Moishezon, then there exists a bimeromorphic modification $\pi:\tx\longrightarrow X$, given by a composition of finitely many blow-ups with smooth center, 
    such that $\tx$ is projective manifold and the restriction map $\pi|_{\tx\setminus\exc}:\tx\setminus\exc\overset{\simeq}{\longrightarrow}X\setminus Z$ is biholomorphic, where $Z$ is an analytic subset of $X$ with $\xs\subset Z$ and $\exc:=\pi^{-1}(Z)$ is the $\pi$-exceptional divisor with simple normal crossing.
    Furthermore, $X$ is Moishezon if and only if $X$ carries an almost positive torsion-free coherent sheaf $\scr{L}$ with $\rom{supp}\,\scr{L}=X$.
    
    In particular, if $X$ carries a big line bundle, then $X$ is Moishezon.
\end{theorem}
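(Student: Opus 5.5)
The plan is to split the statement into three parts: the final assertion (a big line bundle forces Moishezon), the existence of a projective modification by smooth-center blow-ups, and the characterization by almost positive sheaves. Since the statement is taken from the literature (\cite{Moi66,Hir64,GR70,GPR94}), the aim is to assemble it from known results rather than re-derive them.

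\emph{Big implies Moishezon.} First reduce to the irreducible components $X_j$, since both bigness and the algebraic dimension are defined componentwise. If $\kappa(X_j,L|_{X_j})=\dim X_j=:n_j$, then for some $m$ the meromorphic map $\Phi_m:X_j\dashrightarrow\bb{P}(H^0(X_j,L^m)^*)$ has image of dimension $n_j$. Choose affine coordinates on the image, pulled back as quotients $s_1/s_0,\ldots,s_{n_j}/s_0$ of sections of $L^m$. These give $n_j$ algebraically independent meromorphic functions on $X_j$, so $a(X_j)\geq n_j$. Siegel's bound gives $a(X_j)\leq n_j$, hence equality.

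\emph{Projective modification.} Following Moishezon \cite{Moi66}, when $a(X_j)=\dim X_j$ the algebraic reduction is bimeromorphic. This should yield a coherent ideal sheaf $\scr{J}\subset\cal{O}_X$, cosupported on a proper analytic subset, whose blow-up $X'=\rom{Bl}_{\scr{J}}X$ is projective. The ideal can be taken to be the base ideal of a suitable linear system realizing the bimeromorphic map to a projective variety. Then:
\begin{itemize}
\item Apply Hironaka's principalization \cite{Hir64} to $\scr{J}$. This gives a finite sequence of blow-ups with smooth centers $X''\to X$ after which $\scr{J}\cdot\cal{O}_{X''}$ is invertible. The universal property of blowing up then makes $X''\to X$ factor through $X'$.
\item Next perform an embedded resolution of $X''$ together with the preimage of $Z:=\xs\cup\rom{cosupp}(\scr{J})$, again by smooth-center blow-ups. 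This makes $\tx$ smooth and $\exc=\pi^{-1}(Z)$ a simple normal crossing divisor.
\item Projectivity of $\tx$ follows because $\tx\to X'$ is a composition of blow-ups of a projective space along coherent ideals, and each such blow-up is projective over its base.
\end{itemize}
By construction $\pi$ is biholomorphic off $Z$.

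\emph{Almost positive sheaf characterization.} Suppose $X$ is Moishezon, with $\pi:\tx\to X$ as above and $A$ ample on $\tx$. Take $\scr{L}:=\pi_*\cal{O}_{\tx}(A)$. It is torsion-free of full support, and it is positive (generated by its sections, separating points) on $X\setminus Z$, i.e.\ almost positive. Conversely, suppose $\scr{L}$ is almost positive. Pull it back to a desingularization and take the torsion-free quotient. Its determinant (or its symmetric powers) then has section growth of order $k^{n}$ on each component, coming from positivity on a dense Zariski-open set (Grauert--Riemenschneider \cite{GR70}). This gives maximal algebraic dimension.

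\emph{Main obstacle.} I expect the hardest step to be producing the ideal $\scr{J}$ with projective blow-up, i.e.\ upgrading ``bimeromorphic to a projective variety'' to ``dominated by a projective blow-up of $X$ itself''. If the direct construction is awkward, I would first use Hironaka's Chow lemma. It dominates the graph of the bimeromorphic map $X\dashrightarrow Y$ by blow-ups of $X$, so that we get a morphism $X'\to Y$. I would then argue that the ample line bundle on $Y$, twisted by a negative combination of exceptional divisors, becomes relatively ample after further smooth-center blow-ups of $X$.
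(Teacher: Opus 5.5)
The paper gives no proof of this statement; it quotes it from Moishezon, Hironaka, Grauert--Riemenschneider and GPR94, so your outline has to be measured against the standard argument. Your ``big $\Rightarrow$ Moishezon'' step is fine. So are the principalization and embedded-resolution steps, \emph{once} some modification of $X$ by smooth-centre blow-ups is known to be projective.

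The genuine gap is exactly the step you flag: you never produce such a modification. The concrete candidate you propose fails. Blowing up the base ideal of a linear system realizing a bimeromorphic map $X\dashrightarrow Y\subset\mathbb{P}^N$ gives the graph $\Gamma\subset X\times\mathbb{P}^N$. This is projective \emph{over $X$}, but $\Gamma\to Y$ need not be a projective morphism, so $\Gamma$ need not be projective. Hironaka's non-projective smooth Moishezon threefold $X$, which has a bimeromorphic morphism $f:X\to\mathbb{P}^3$, shows this: the base ideal is $\mathcal{O}_X$ and its blow-up is $X$ itself.

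The same example defeats your fallback. There $X'=X$, and no twist $f^*\mathcal{O}(m)\otimes\mathcal{O}(-\sum a_iE_i)$ is ample, since $X$ is not projective. The ``negative combination of exceptional divisors'' mechanism gives relative ampleness only for a composition of blow-ups \emph{of $Y$}. But $X'\to Y$ is a composition of blow-ups of $X$, and you do not say which further blow-ups would repair this.

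The missing idea is to apply Hironaka's Chow lemma twice, starting on the projective side.
(a) Applied to $\Gamma\to Y$, it gives smooth-centre blow-ups $Y'\to Y$ factoring through $\Gamma$. $Y'$ is projective, being an iterated blow-up of a projective variety along ideals that are algebraic by GAGA.
(b) Applied to the modification $Y'\to\Gamma\to X$, it gives smooth-centre blow-ups $X''\to X$ that factor through $Y'$ and are isomorphisms off a nowhere dense analytic set.
(c) $X''\to X$ is projective and $Y'\to X$ is separated, so $X''\to Y'$ is projective: $X''$ is a closed graph in $X''\times_X Y'$. Hence $X''$ is projective.

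Your resolution/SNC step then finishes the proof, since further smooth-centre blow-ups preserve projectivity. The same relative-projectivity lemma is what your last bullet really needs. $X''\to\mathrm{Bl}_{\mathscr{J}}X$ is not a composition of blow-ups of $\mathrm{Bl}_{\mathscr{J}}X$, but it is projective because $X''\to X$ is.

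Finally, Grauert--Riemenschneider's ``almost positive'' is a metric notion, not ``generated by sections off $Z$''. Your forward direction goes through once you make two changes. First, pass to the subsheaf of $\pi_*\mathcal{O}(A)$ generated by $H^0(\widetilde{X},A)$. Second, give its linear space the metric induced from the trivial bundle, that is, the pulled-back Fubini--Study metric. This metric is positive wherever $|A|$ embeds, i.e.\ off $Z$.
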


Thus, since a compact Moishezon space contains sufficiently many curves, we define a line bundle to be \textit{nef} as usual in terms of the non-negativity of its intersection numbers.
Furthermore, nefness is preserved under pullbacks by resolution of singularities.

A Moishezon manifold is projective if and only if it admits a \kah metric (see \cite{Moi66}). 
Note that, in the case of Moishezon spaces, the existence of a \kah metric does not in general imply projectivity (see \cite{Moi75}); however, a \kah Moishezon space with 1-rational singularities is known to be projective (see \cite[Theorem 6]{Nam02}).
Here, a complex space $X$ with $1$-\textit{rational singularities} is normal and admits a resolusion $\pi:Y\longrightarrow X$ such that $R^1\pi_*\cal{O}_Y=0$; rational singularities are examples of this.

\begin{theorem}\label{Theorem: characterization of big and singular positive}
    Let $X$ be a compact complex space and $L\longrightarrow X$ be a holomorphic line bundle. 
    We have the following relationship.
    \begin{itemize}
        \item If $L$ is singular positive, then $L$ is big. 
        \item If $L$ is big, then there exists a singular Hermitian metric $h$ on $L$ such that $\iO{L,h}\geq\gamma$ on $\reg$ in the sense of currents for some Hermitian metric $\gamma$ on $X$.
        
        Furthermore, if $X$ is normal, then we can take the singular Hermitian metric $h$ with $\iO{L,h}\geq\gamma$ on $X$ in the sense of currents for some Hermitian metric $\gamma$ on $X$, i.e., $h$ is singular positive. 
    \end{itemize}
    Thus, if $X$ is normal, then singular positivity and bigness coincide.
\end{theorem}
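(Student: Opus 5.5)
The plan is to reduce both directions to the smooth projective case via the modification $\pi:\tx\longrightarrow X$ of Theorem \ref{Theorem: Moishezon-ness} (or a canonical desingularization). Then we use the classical characterization on manifolds: a line bundle on a compact manifold is big if and only if it carries a singular Hermitian metric whose curvature current dominates a Kähler or Hermitian form (Demailly, Ji--Shiffman, Bonavero). Finally we push the metric back down to $X$.

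\emph{First item.} Suppose $h$ is singular positive on $L$. By Proposition \ref{Proposition: curvature condition of sHm}, compactness gives $\iO{L,h}\geq\varepsilon_0\omega$ on $X$ for a constant $\varepsilon_0>0$. Pull back by the canonical desingularization $\pi$ and restrict to each irreducible component. The local weights $\pi^*\varphi$ are quasi-psh, since plurisubharmonicity is preserved under holomorphic maps. Then $\iO{\pi^*L,\pi^*h}\geq\varepsilon_0\pi^*\omega$, which is semi-positive and strictly positive off $\exc$. To get a genuine Kähler current on $\tx$, I would add a small multiple of a current $-\delta\,\idd\log|s_{\exc}|^2_{h_F}$ built from an effective combination $F$ of exceptional divisors that is $\pi$-ample: $-F$ is $\pi$-ample for a sequence of blow-ups. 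For $m\gg1$ this makes $m\pi^*L-F$ carry a metric with curvature $\geq c\tilde\omega$. By Boucksom's or Demailly's criterion, $\pi^*L$ is therefore big on $\tx$. Since $H^0(X,L^{\otimes m})\cong H^0(\tx,\pi^*L^{\otimes m})$ for $\pi$ bimeromorphic between reduced spaces, at least for the sections relevant to the Iitaka dimension (which is bimeromorphically invariant), $\kappa(X_j,L)=\dim X_j$.

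\emph{Second item.} Suppose $L$ is big. Then $X$ is Moishezon by Theorem \ref{Theorem: Moishezon-ness}. Take $\pi:\tx\longrightarrow X$ projective with $\pi$ an isomorphism over $X\setminus Z$. Since $\pi^*L$ is big on the projective manifold $\tx$, Kodaira's lemma gives $m\pi^*L=A+D$ with $A$ ample and $D$ effective. The metric $\tl h:=(h_A\cdot|s_D|^{-2}_{h_D})^{1/m}$ satisfies $\iO{\pi^*L,\tl h}\geq\frac1m\iO{A,h_A}$. Since $\pi^*L$ is a pullback, $\tl h=\pi^*h_0\,e^{-2\tl\psi}$ with $\tl\psi$ quasi-psh globally on $\tx$. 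Define $h:=h_0e^{-2\psi}$, where $\psi:=\tl\psi\circ\pi^{-1}$ on $X\setminus Z$, extended arbitrarily on the null set $Z$. Then $\psi\in L^1_{loc}(X)$ because integrals over $X_{reg}$ pull back to integrals over $\tx$, so $h$ is a singular Hermitian metric. On $X\setminus Z$ we have $\iO{L,h}\geq\frac1m(\pi^{-1})^*\omega_A$, and for some Hermitian $\gamma$ on $X$ we have $\pi^*\gamma\leq C\omega_A$ (compactness), so $\iO{L,h}\geq\gamma/C$ on $X\setminus Z$. The \textbf{main obstacle} is extending this inequality to $\reg$, across $Z\cap\reg$. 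For a test form $\phi\geq0$ on $\reg$, the computation in the proof of Proposition \ref{Proposition: curvature condition of sHm} gives $\langle\iO{L,h},\phi\rangle=\langle\iO{\pi^*L,\tl h},\pi^*\phi\rangle$, using that $\pi^*\phi$ is smooth on $\tx$ and $\tl\psi$ is quasi-psh globally. It also gives $\pi^*\phi\wedge\omega_A\geq\pi^*(\phi\wedge\gamma)/C$ pointwise. Integrating the positive current $\iO{\pi^*L,\tl h}-\frac1m\omega_A$ against $\pi^*\phi$ then yields the claim. I expect the delicate point to be justifying the first equality, namely that no mass of $\idd\tl\psi$ hides on $\exc$ invisible to $\phi$. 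This is handled because $\tl\psi$ is genuinely quasi-psh on $\tx$, so Stokes applies there directly. Restricting to $\reg$ avoids needing $\pi^*\phi$ to pair with the current on $\tx$ across $\exc$.

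\emph{Normal case.} If $X$ is normal, then on $\reg$ the function $\psi+\text{(smooth)}$ is strictly psh locally (after subtracting a smooth strictly psh potential dominating $\gamma$ and adjusting). By the converse part of Proposition \ref{Proposition: curvature condition of sHm}, normality and the Grauert--Remmert extension \cite{GR56} extend $\psi$ across $\xs$ as a psh function. This function is strictly psh because $\psi-c|z|^2$ also extends. Hence $h$ is singular positive, and combined with the first item, bigness and singular positivity coincide on normal $X$.
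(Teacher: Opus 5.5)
Your proof is correct and follows the same architecture as the paper's. In the first item you pass to a desingularization and repair the degeneracy of $\pi^*\iO{L,h}$ along $\exc$ with an exceptional divisor $F$ such that $-F$ is $\pi$-ample; this is exactly the quasi-plurisubharmonic $\psi$, smooth off $\exc$, that the paper imports from [Wat25b, Lemma 3.2]. In the second item you push down a metric with K\"{a}hler-current curvature from a smooth model. In the normal case you extend across $\xs$ by Grauert--Remmert, as the paper does.

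The one real divergence is in the second item. The paper takes the canonical desingularization, which is biholomorphic over all of $\reg$, and quotes Demailly's characterization of bigness on $\tx$. So $\iO{L,h}\geq\gamma$ on $\reg$ follows at once from $h=(\pi^{-1})^*\tl{h}$. You instead work on a projective model that is only an isomorphism over $X\setminus Z$ and produce $\tl{h}$ by Kodaira's lemma. You then need the identity $\langle\iO{L,h},\phi\rangle=\langle\iO{\pi^*L,\tl{h}},\pi^*\phi\rangle$ to cross $Z\cap\reg$. That step is sound: $\tl{\psi}$ is quasi-plurisubharmonic on all of $\tx$, $Z\cap\reg$ is a null set, and $\pi^*\phi$ is a smooth, compactly supported, strongly positive form on $\tx$. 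Your route costs an extra argument, but it is the one needed if you insist on a projective $\tx$, as in Lemma~\ref{Lemma: pullback of big for singular Hermitian metric}, since such a model need not be biholomorphic over $\reg$. You also check that $\psi\in L^1_{loc}(X)$, which the paper leaves implicit.

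There are two small slips.

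\emph{Sign of the correction term.} It must carry the sign of the quasi-plurisubharmonic function $\log|s_F|^2_{h_F}$, so you add $+\delta\,\idd\log|s_F|^2_{h_F}$. Its non-divisorial part is $\delta$ times a curvature form of $\cal{O}(-F)$. With your sign you add $-\delta[F]$ together with a curvature form of $\cal{O}(F)$, which is negative along the fibres of $\pi$. Your reformulation via the metric on $m\pi^*L-F$ is the correct version.

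\emph{The map on sections.} When $X$ is not normal, $H^0(X,L^{\otimes m})\to H^0(\tx,\pi^*L^{\otimes m})$ is injective but in general not surjective. Let $\nu:X^\nu\to X$ be the normalization. The cokernel embeds into $H^0(X,L^{\otimes m}\otimes(\nu_*\cal{O}_{X^\nu}/\cal{O}_X))$. This has dimension $O(m^{n-1})$, because the quotient sheaf is supported on the non-normal locus. So bigness still descends; the paper simply asserts bimeromorphic invariance at this point.
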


\begin{proof}
    Let $\pi:\tx\longrightarrow X$ be a resolusion of singularities. 
    By Proposition \ref{Proposition: curvature condition of sHm}, if $L$ is singular positive on $X$, then there exist a singular Hermitian metric $h$ on $L$ and a Hermitian metric $\omega$ on $X$ such that $\iO{L,h}\geq\omega$ holds on $X$ in the sense of currents. 
    By applying \cite[Lemma 3.2]{Wat25b}, there exist a quasi-plurisubharmonic function $\psi:\tx\longrightarrow[-\infty,+\infty)$ which is smooth on $\tx\setminus\exc$ and a number $\varepsilon_X>0$ such that $\pi^*\omega+\varepsilon\idd\psi$ is strictly positive on $\tx$ in the sense of currents for any $0<\varepsilon<\varepsilon_X$; 
    that is, there exists a Hermitian metric $\tl{\gamma}_\varepsilon>0$ on $\tx$ such that $\pi^*\omega+\varepsilon\idd\psi\geq\tl{\gamma}_\varepsilon$ on $\tx$ in the sense of currents. 
    Therefore, defining a singular Hermitian metric on $\pi^*L$ by $\pi^*he^{-\varepsilon\psi}$, it becomes singular positive on $\tx$. Hence, by Demailly's characterization of bigness (see \cite{Dem90} \cite[Chapter 6]{Dem12}), $\pi^*L$ is big.
    Since bigness is a bimeromorphic invariant, $L$ is also big.

    If $L$ is big, then $\pi^*L$ is also big and admits a singular positive Hermitian metric $\tl{h}$, i.e., $\iO{\pi^*L,\tl{h}}\geq\tl{\omega}$ on $\tx$ in the sense of currents for some Hermitian metric $\tl{\omega}$. 
    Here, there exists a Hermitian metric $\gamma$ on $X$ such that $\tl{\omega}\geq\pi^*\gamma$ on $\tx$. 
    We define a singular Hermitian metric $h$ of $L$ by setting $h=(\pi^{-1})^*\tl{h}$ on $\reg$ and $h=0$ on $\xs$. 
    By the biholomorphicity of $\pi|_{\tx\setminus \exc}:\tx\setminus \exc\overset{\simeq}{\longrightarrow}\reg$, we obtain that $\iO{L,h}\geq\gamma$ on $\reg$ in the sense of currents. 
    For this singular Hermitian metric $h$, we have $\iO{\pi^*L,\pi^*h}\geq\pi^*\gamma$ on $\tx\setminus \exc$ in the sense of currents. Then, if $X$ is normal, as in the proof of Proposition \ref{Proposition: curvature condition of sHm}, 
    by uniquely replacing the weight function uniquely via the extension of plurisubharmonic functions (see \cite[Satz 4]{GR56}), we can adjust the values of $h$ on $\xs$ to obtain a singular positive Hermitian metric.
\end{proof}

This proof yields the following.

\begin{lemma}\label{Lemma: pullback of big for singular Hermitian metric}
    Let $X$ be a compact complex space and $L\longrightarrow X$ be a holomorphic line bundle. 
    If $L$ is big, then there exists a resolusion of singularities $\pi:\tx\longrightarrow X$ such that $\tx$ is projective and $\pi^*L$ is big. 
    Furthermore, $\pi^*L$ admits a singular Hermitian metric $\tl{h}$ on $\tx$, which is singular positive and can be choosen such that $\scr{I}(\tl{h})=\scr{I}(\pi^*h)$ on $\tx$ 
    for the singular Hermitian metric $h$ on $L$ appearing in Theorem \ref{Theorem: characterization of big and singular positive}.
\end{lemma}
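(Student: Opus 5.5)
We first choose the resolution. Since $L$ is big, Theorem \ref{Theorem: Moishezon-ness} shows that $X$ is Moishezon. It then supplies a bimeromorphic modification $\pi:\tx\longrightarrow X$, a composition of finitely many blow-ups with smooth centres, such that $\tx$ is a projective manifold and $\pi$ is biholomorphic off $\exc=\pi^{-1}(Z)$, where $Z\supset\xs$. Bigness is a bimeromorphic invariant and the Kodaira-Iitaka dimension is preserved by pullback under bimeromorphic morphisms, so $\pi^*L$ is big on $\tx$. By Demailly's characterization of bigness on the projective manifold $\tx$, there is a singular positive metric $\tl{h}_0$ on $\pi^*L$ with $\iO{\pi^*L,\tl{h}_0}\geq\tl{\omega}$ for some Hermitian metric $\tl{\omega}$. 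As in the proof of Theorem \ref{Theorem: characterization of big and singular positive}, we then set $h:=(\pi^{-1})^*\tl{h}_0$ on $X\setminus Z$ and extend it trivially across $Z$. With this choice of $h$ we have $\pi^*h=\tl{h}_0$ almost everywhere on $\tx$, since $\exc$ has measure zero.

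The remaining claim is $\scr{I}(\tl{h})=\scr{I}(\pi^*h)$ for a singular positive $\tl{h}$. The natural choice is $\tl{h}:=\tl{h}_0$ itself. Its local weights are quasi-psh and agree almost everywhere with the local weights of $\pi^*h$. Multiplier ideals are defined by local integrability of $|f|^2e^{-2\varphi}$, which only sees $\varphi$ up to a null set, so $\scr{I}(\tl{h})=\scr{I}(\pi^*h)$ follows directly.

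The main obstacle is consistency with the metric $h$ ``appearing in Theorem \ref{Theorem: characterization of big and singular positive}''. That theorem may have been proved with an arbitrary resolution $\pi'$, possibly with non-projective $\tx'$. In that case I would dominate both resolutions by a common projective resolution $\mu:\widehat{X}\to\tx'$, obtained via Chow's lemma and Hironaka, with $\widehat{X}\to X$ projective and a composite of blow-ups. The metric $\mu^*\tl{h}'$ is then singular semi-positive with curvature at least $\mu^*\tl{\omega}'$, which is only semi-positive along the $\mu$-exceptional set. To restore strict positivity I would use \cite[Lemma 3.2]{Wat25b}, as in the proof of Theorem \ref{Theorem: characterization of big and singular positive}. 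It provides a quasi-psh $\psi$, smooth off the exceptional locus, with $\mu^*\tl{\omega}'+\varepsilon\idd\psi$ strictly positive, and we put $\tl{h}:=\mu^*\tl{h}'e^{-\varepsilon\psi}$.

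This twist could enlarge the multiplier ideal, and I would control it with the strong openness property, Theorem \ref{Theorem: strong openness property}. Locally the weight becomes $\varphi+\varepsilon\psi$, with $\psi$ negative psh after subtracting a smooth function. Hence $\scr{I}(\varphi)=\bigcup_{\varepsilon>0}\scr{I}(\varphi+\varepsilon\psi)$. By coherence and the Noetherian property, on each of finitely many charts of the compact space $\widehat{X}$ the union stabilizes for small $\varepsilon$. Choosing $\varepsilon$ below all these thresholds and below $\varepsilon_X$ gives $\scr{I}(\tl{h})=\scr{I}(\mu^*\pi'^*h)$ with $\tl{h}$ singular positive, as required.
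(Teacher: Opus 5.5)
Your proposal is correct and is essentially the paper's own argument. The paper reads the lemma off the proof of Theorem \ref{Theorem: characterization of big and singular positive}, taking $\pi$ to be the projective modification from Theorem \ref{Theorem: Moishezon-ness}, $\tl{h}$ the singular positive metric on $\pi^*L$ from Demailly's characterization, and $h$ its push-forward, so that $\pi^*h=\tl{h}$ off $\exc$ and hence $\scr{I}(\tl{h})=\scr{I}(\pi^*h)$ trivially. Your additional treatment of a non-projective resolution is also sound: the twist by $e^{-\varepsilon\psi}$ via \cite[Lemma 3.2]{Wat25b}, with $\varepsilon$ fixed by strong openness and the Noetherian property, is the device the paper itself uses in the proof of Theorem \ref{Theorem: Nadel vanishing on Moishezon}.
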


\section{Global $L^2$-estimates on the regular locus $\reg$}

In this section, we provide a proof of $L^2$-existence Theorem \ref{Theorem: global L2-estimates}.
Let $X$ be a complex space of pure dimension $n$ and $L$ be a holomorphic line bundle on $X$ with a singular Hermitian metric $h$. 
For any open subset $U\subset X$, we denote by 
\begin{align*}
    L^{2,loc}_{p,q}(U,L,h):=\Bigl\{u\in L^{2,loc}_{p,q}(\ureg,L,h)\,\Big|\, u|_{K_{reg}}\in L^2_{p,q}(K_{reg},L,h),\forall\,K\Subset U\Bigr\}
\end{align*} 
the space of $L$-valued $(p,q)$-forms on $U$ which are locally square integrable with respect to $h$. 
This space does not depend on the choice of a Hermitian metric on $U$; more precisely, for any Hermitian metric $\omega_U$ on $U$, we have $L^2_{p,q}(K_{reg},L,h)=L^2_{p,q}(K_{reg},L;\omega_U,h)$.
We further denote by 
\begin{align*}
    L^2_{p,q}(X,L;\omega,h)=&\,L^2_{p,q}(\reg,L;\omega,h)\\
    :=&\biggl\{u\in L^{2,loc}_{p,q}(X,L,h)\,\bigg|\,||u||^2_{h,\omega}:=\int_{\reg}|u|^2_{h,\omega}dV_\omega<+\infty\biggr\}
\end{align*}
the space of $L$-valued $(p,q)$-forms $u$ with measurable coefficients that are square integrable with respect to $h$ and a fixed smooth Hermitian metric $\omega$.
The same notion is defined for holomorphic vector bundles.
Unless otherwise stated, integrals over $\reg$ will be denoted simply by integrals over $X$. 
We first introduce several notions of positivity. 

\begin{definition}[{\cite[Chapter VII]{Dem-book}, \cite{Dem82}}]
    Let $T$ and $V$ be complex vector spaces of dimensions $n,r$ respectively, and $\Theta$ be a Hermitian form on $T\otimes V$. 
    Let $F$ be a holomorphic vector bundle over a complex manifold $M$. 
    \begin{itemize}
        \item A tensor $u\in T\otimes V$ is said to be of \textit{rank} $m$ if $m\geq0$ is the smallest integer such that $u$ can be written $u=\sum^m_{j=1}\xi_j\otimes s_j$, where $\xi_j\in T$, $s_j\in V$. 
        \item $\Theta$ is $m$-\textit{positive} (resp. $m$-\textit{semi}-\textit{positive}) if $\Theta(u)>0$ (resp. $\geq0$) for any tensor $0\ne u\in T\otimes V$ of rank $\leq m$. In this case, we write $\Theta>_m0$ (resp. $\geq_m0$). 
        \item A smooth Hermitian metric $h_F$ on $F$ is \textit{Demailly} $m$-\textit{positive} (resp. $m$-\textit{semi}-\textit{positive}) if $\theta_{F,h_F}>_m0$ (resp. $\theta_{F,h_F}\geq_m0$), where $\theta_{F,h_F}$ denotes the Hermitian form on $T_X\otimes F$ naturally associated with the curvature $\iO{F,h_F}$.
    \end{itemize}
\end{definition}

For simplicity, we write $\iO{F,h_F}$ in place of $\theta_{F,h_F}$ whenever no confusion is likely to arise.
Being $1$-positive is equivalent to Griffiths positivity, while Demailly $m$-positivity for $m\geq\min\{n,\rom{rank}\,F\}$ corresponds to Nakano positivity.
The following $L^2$-estimate due to Demailly plays an essential role in the proof of Theorem \ref{Theorem: global L2-estimates}.

\begin{theorem}[{\cite[Th\'{e}or\`{e}m 5.1]{Dem82}}]\label{Theorem: [Theorem 5.1, Dem82]}
    Let $M$ be a complex manifold of dimension $n$ which admits a complete \kah metric. Let $\varphi$ be a quasi-plurisubharmonic function on $M$ and $F\longrightarrow M$ be a holomorphic vector bundle with a smooth Hermitian metric $h_F$. 
    The Lebesgue decomposition of the order-zero current $\idd\varphi$ is therefore of the form $\idd\varphi=(\idd\varphi)_{ac}+(\idd\varphi)_{sing}$, where the singular part $(\idd\varphi)_{sing}$ is a nonnegative $(1,1)$-current, and the absolutely continuous part $(\idd\varphi)_{ac}$ is a $(1,1)$-form with $L^1_{loc}$-coefficients which is locally bounded from below.
    We assume that 
    \begin{align*}
        \iO{F,h_F}+(\idd\varphi)_{ac}\geq_{n-q+1}0
    \end{align*}
    on $M$. 
    Then, for any \kah metric $\omega$, not necessarily complete, and any $f\in L^{2,loc}_{n,q}(M,F,$ $h_F e^{-\varphi})$ satisfying $\dbar f=0$ and $\int_M\lara{[\iO{F,h_F}+(\idd\varphi)_{ac},\Lambda_\omega]^{-1}f}{f}_{h_F,\omega}e^{-\varphi}dV_\omega<+\infty$, 
    there exists $u\in L^2_{n,q-1}(M,F;\omega,h_Fe^{-\varphi})$ such that $\dbar u=f$ on $M$ and 
    \begin{align*}
        \int_M|u|^2_{h_F,\omega}e^{-\varphi}dV_\omega\leq\int_M\lara{[\iO{F,h_F}+(\idd\varphi)_{ac},\Lambda_\omega]^{-1}f}{f}_{h_F,\omega}e^{-\varphi}dV_\omega.
    \end{align*}
\end{theorem}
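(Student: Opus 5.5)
The plan is the classical one: regularize $\varphi$, apply the Bochner--Kodaira--Nakano $L^2$-existence theorem for smooth data on a complete \kah manifold, and pass to weak limits. Fix a complete \kah metric $\widehat{\omega}$ on $M$ and set $\omega_\delta:=\omega+\delta\widehat{\omega}$ for $\delta>0$; each $\omega_\delta$ is complete and \kah. I would use Demailly's monotonicity lemma for $(n,\ast)$-forms. If $\omega_\delta\geq\omega$, then $|u|^2_{\omega_\delta}dV_{\omega_\delta}\leq|u|^2_{\omega}dV_\omega$ for $(n,q-1)$-forms $u$. Moreover $\lara{[\Theta,\Lambda_{\omega_\delta}]^{-1}f}{f}_{\omega_\delta}dV_{\omega_\delta}\leq\lara{[\Theta,\Lambda_{\omega}]^{-1}f}{f}_{\omega}dV_\omega$ for $(n,q)$-forms $f$, whenever $\Theta\geq_{n-q+1}0$. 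So it suffices to solve the equation for $\omega_\delta$ with the right-hand side controlled by the $\omega$-integral, and then let $\delta\to0$. To get strict invertibility I would also replace the curvature by $\Theta+\lambda\omega$ for $\lambda>0$; this only decreases the right-hand side, and at the end $\lambda\to0$ by monotone convergence.

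Second step: regularize $\varphi$. I would exhaust $M$ by relatively compact open sets $M_c$; working on $M_c$ is harmless since $M_c$ still carries a complete \kah metric when $M_c$ is taken as a sublevel set of a suitable exhaustion. On $M_c$, cover by finitely many coordinate balls and take convolutions $\varphi*\rho_\nu$ locally. The key identity is $\idd(\varphi*\rho_\nu)=(\idd\varphi)*\rho_\nu\geq(\idd\varphi)_{ac}*\rho_\nu$, because the singular part is a nonnegative current. The right-hand side converges to $(\idd\varphi)_{ac}$ in $L^1_{loc}$ and, along a subsequence, almost everywhere; it is bounded below uniformly since $(\idd\varphi)_{ac}$ is locally bounded from below. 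I would glue the local regularizations with a partition of unity (or a regularized maximum). This gives smooth $\varphi_\nu\downarrow\varphi$ on $M_c$ with
\begin{align*}
    \iO{F,h_F}+\idd\varphi_\nu\geq_{n-q+1}\gamma_\nu-\eta_\nu\omega,
\end{align*}
where $\gamma_\nu\to\iO{F,h_F}+(\idd\varphi)_{ac}$ a.e. and $\eta_\nu\to0$ uniformly on $M_c$. The error $\eta_\nu\omega$ is absorbed by the auxiliary $\lambda\omega$ once $\nu$ is large.

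Third step: solve with smooth data. For each $(\nu,\delta)$, the smooth-weight $L^2$ theorem on the complete \kah manifold $(M_c,\omega_\delta)$ gives $u_{\nu,\delta}$ with $\dbar u_{\nu,\delta}=f$. Its $h_Fe^{-\varphi_\nu}$-norm is bounded by the integral of $\lara{[\iO{F,h_F}+\idd\varphi_\nu+\lambda\omega,\Lambda]^{-1}f}{f}e^{-\varphi_\nu}$. Since $\varphi_\nu\geq\varphi$, we have $e^{-\varphi_\nu}\leq e^{-\varphi}$. Since the curvature dominates $\gamma_\nu+(\lambda-\eta_\nu)\omega$ and $[\cdot,\Lambda]^{-1}$ is monotone decreasing in the curvature, the integrand is dominated and converges a.e. Dominated convergence then bounds the limsup by the desired integral with $(\idd\varphi)_{ac}+\lambda\omega$. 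The solutions are uniformly bounded in $L^2(e^{-\varphi_{\nu_0}})$ for fixed $\nu_0$. Extracting weak limits successively in $\nu$ (with Fatou, then $\nu_0\to\infty$), in $\delta$, in $\lambda$, and diagonally in $c$ yields $u$ with $\dbar u=f$ in the sense of distributions, since weak limits preserve this. The resulting estimate is the one stated.

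The main obstacle is the second step. One has to keep $(n-q+1)$-positivity of the regularized curvature with only an arbitrarily small loss, while $(\idd\varphi)_{ac}$ merely has $L^1_{loc}$ coefficients. One also needs the pointwise inverse $[\,\cdot\,,\Lambda_\omega]^{-1}$ to converge a.e. with an integrable majorant. My first attempt would be to use the monotonicity of $\Theta\mapsto\lara{[\Theta,\Lambda]^{-1}f}{f}$ together with the uniform lower bound on $(\idd\varphi)_{ac}$, which supplies the domination. If the gluing errors turn out to be nonuniform, I would fall back on Demailly's approximation theorem for quasi-plurisubharmonic functions, whose loss $\eta_\nu\omega$ is controlled by Lelong numbers and tends to zero.
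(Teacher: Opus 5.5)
There is a genuine gap. Your overall architecture is the right one: complete metrics $\omega_\delta=\omega+\delta\widehat{\omega}$, the monotonicity lemma for $(n,q)$-forms, regularization of $\varphi$, and weak limits with Fatou in $\nu_0$. But your second step fails, and with it the absorption mechanism of the third. Gluing local convolutions $\varphi_{\nu,j}=\varphi*\rho_\nu$ by a partition of unity does not give a loss $\eta_\nu\omega$ with $\eta_\nu\to0$ uniformly on $M_c$. The error terms in $\idd\bigl(\sum_j\theta_j\varphi_{\nu,j}\bigr)$ are $(\varphi_{\nu,j}-\varphi_{\nu,k})\idd\theta_j$ and $i\partial\theta_j\wedge\dbar(\varphi_{\nu,j}-\varphi_{\nu,k})$ plus its conjugate. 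Near a point where $\varphi$ has positive Lelong number (e.g.\ $\varphi=\log|z_1|$), mollifications taken in two different charts differ, at points of the polar set, by a quantity tending to a nonzero constant. Their gradients differ by an amount of order $\nu$ at distance $\sim1/\nu$ from the polar set. So the loss neither tends to $0$ nor stays bounded. A regularized maximum fails for the same reason, since it needs the local approximants to agree up to $o(1)$ on overlaps.

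No better gluing can repair this. In the paper's example (blow-up of a point, $\varPsi$ strictly plurisubharmonic with a logarithmic pole along $E_p\cong\bb{P}^{n-1}$), a smooth approximation with uniformly vanishing loss would be strictly plurisubharmonic near the compact $E_p$, contradicting the maximum principle. Your fallback, Demailly's approximation theorem, only gives $\lambda_\nu\to0$ almost everywhere, and uniformly only when all Lelong numbers vanish. So ``the error is absorbed by $\lambda\omega$ once $\nu$ is large'' fails in both versions. Moreover, you cannot simply add $\lambda\omega$ to the curvature: the curvature of $h_Fe^{-\varphi_\nu}$ is fixed, and $\omega$ has no global potential in general.

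The missing ingredient is the $L^2$ existence theorem with an error term; this is exactly how the paper's sketch removes the loss (``treated as an error term''). On a complete K\"ahler manifold, suppose $[\iO{F,h_F}+\idd\varphi_\nu,\Lambda]+q\mu\geq B$ on $(n,q)$-forms for some function $\mu\geq0$. Then Bochner--Kodaira--Nakano and the Riesz representation theorem give $u_\nu,w_\nu$ with $\dbar u_\nu+\sqrt{q\mu}\,w_\nu=f$ and $\|u_\nu\|^2+\|w_\nu\|^2\leq\int\lara{B^{-1}f}{f}e^{-\varphi_\nu}$. With $\mu=\lambda$ constant, this is the only legitimate form of your auxiliary $\lambda\omega$.

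Now take $\varphi_\nu$ from Demailly's theorem and let $\mu_\nu$ be the negative part of $\idd\varphi_\nu-(\idd\varphi)_{ac}$ relative to $\omega$.
\begin{itemize}
\item Since $\mu_\nu\leq\lambda_\nu+|\gamma_\nu-(\idd\varphi)_{ac}|_\omega$, we get $\mu_\nu\to0$ a.e.
\item Since $\gamma_\nu\geq\theta$ and the $\lambda_\nu$ decrease, $\mu_\nu\leq\lambda_1+|(\idd\varphi)_{ac}-\theta|_\omega\in L^1_{loc}$.
\item Hence $\sqrt{q\mu_\nu}\,w_\nu\rightharpoonup0$ by dominated convergence against test forms, and the weak limit of the $u_\nu$ solves $\dbar u=f$.
\end{itemize}
Moreover, $B=[\iO{F,h_F}+(\idd\varphi)_{ac},\Lambda]$ is now independent of $\nu$. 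The right-hand side is then bounded by the target integral simply because $e^{-\varphi_\nu}\leq e^{-\varphi}$.

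This replaces your domination claim, which needs a $\nu$-uniform $(n-q+1)$-lower bound on $\iO{F,h_F}+\gamma_\nu$ that $\gamma_\nu\geq\theta$ does not supply. It also makes the exhaustion by $M_c$ unnecessary. That exhaustion was unjustified anyway: the hypothesis gives a complete K\"ahler metric on $M$, but no plurisubharmonic exhaustion whose sublevel sets are complete K\"ahler.
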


We briefly comment on this theorem. Its proof is obtained by an effective application of the following approximation theorem for quasi-plurisubharmonic functions. 

\begin{theorem}[{\cite[Th\'{e}or\`{e}m 9.1]{Dem82}}]
    Let $M$ be a complex manifold equipped with a \kah metric $\omega$ and $\varphi$ be a quasi-plurisubharmonic function on $M$. 
    Assume that there exists a continuous real $(1,1)$-form $\theta$ such that $\idd\varphi\geq\theta$ in the sense of currents. 
    Then there exist a decreasing family $(\varphi_\nu)_{\nu\in\bb{N}}$ of smooth functions on $M$, a family $(\gamma_\nu)_{\nu\in\bb{N}}$ of continuous real $(1,1)$-forms, and a decreasing family $(\lambda_\nu)_{\nu\in\bb{N}}$ of continuous functions on $M$ satisfying the following properties: 
    \begin{itemize}
        \item [$(a)$] $\displaystyle\lim_{\nu\to+\infty}\varphi_\nu(x)=\varphi(x)$ for every $x\in M$, 
        \item [$(b)$] $\idd\varphi_\nu\geq\gamma_\nu-\lambda_\nu\omega$ and $\gamma_\nu\geq\theta$, 
        \item [$(c)$] $\gamma_\nu\longrightarrow(\idd\varphi)_{ac}$ almost everywhere on $M$ as $\nu\to+\infty$,
        \item [$(d)$] $\lambda_\nu\longrightarrow0$ almost everywhere on $M$, more precisely, at every point $x\in M$ where the Lelong number $\nu(\varphi,x)=0$ vanishes,
        \item [$(e)$] If $\nu(\varphi,x)=0$ for every $x\in M$ (in particular, if $\varphi$ is locally bounded), then $\lambda_\nu$ converges uniformly to $0$ on every compact subset of $M$. 
    \end{itemize}
\end{theorem}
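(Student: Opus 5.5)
The plan is to follow Demailly's regularization of quasi-plurisubharmonic functions on a Kähler manifold, using the exponential map of $\omega$ in place of ordinary convolution. Fix a smooth radial kernel $\chi\geq0$ supported in the unit ball of $\bb{C}^n$ with $\int\chi=1$. For $x\in M$ and $0<\delta<\delta_0(x)$, where $\delta_0$ is a continuous positive function bounded by the injectivity radius, set
\begin{align*}
    \Phi(x,\delta):=\int_{\zeta\in T_xM}\varphi\bigl(\exp_x(\delta\zeta)\bigr)\,\chi(|\zeta|^2_\omega)\,d\lambda(\zeta).
\end{align*}
I would then take $\varphi_\nu(x):=\Phi(x,\delta_\nu(x))+\kappa\,\delta_\nu(x)^2$ for a suitable decreasing sequence $\delta_\nu\downarrow0$ of positive continuous functions and a constant $\kappa$ (a locally uniform constant, or a continuous function, depending on the local bounds below). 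Property $(a)$ would then follow from upper semicontinuity together with the sub-mean-value inequality for $\varphi$ in normal coordinates. Monotonicity in $\nu$ would come from a Kiselman-type convexity: up to a correction $O(\delta^2)$ coming from the curvature of $\omega$, the map $\log\delta\mapsto\Phi(x,\delta)$ is increasing and convex, and the term $\kappa\delta^2$ absorbs the error.

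The core step is the Hessian estimate. I would differentiate $\Phi$ twice in $x$ after writing $\exp_x$ in holomorphic normal coordinates, where the Kähler condition makes $\exp_x(\delta\zeta)$ holomorphic in $\zeta$ up to second order. The expected inequality is
\begin{align*}
    \idd_x\Phi(x,\delta)\geq\bigl(\idd\varphi\bigr)*\chi_\delta(x)-A\Bigl(\frac{\partial\Phi}{\partial\log\delta}(x,\delta)+\delta^2\Bigr)\omega,
\end{align*}
where $A$ is locally bounded and depends on the curvature of $\omega$, and $(\idd\varphi)*\chi_\delta$ is the $(1,1)$-form obtained by averaging the current $\idd\varphi$ through parallel transport. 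Splitting $\idd\varphi=(\idd\varphi)_{ac}+(\idd\varphi)_{sing}$ and discarding the nonnegative singular part, I would define
\begin{align*}
    \gamma_\nu:=\theta+\bigl((\idd\varphi)_{ac}-\theta\bigr)*\chi_{\delta_\nu},
\end{align*}
modified slightly so that it is continuous. Since $\idd\varphi-\theta\geq0$, this gives $\gamma_\nu\geq\theta$ up to an error tending to zero that is continuous in $x$, and it also gives the first half of $(b)$. I would set $\lambda_\nu:=A\bigl(\partial\Phi/\partial\log\delta\bigr)(x,\delta_\nu)+A\delta_\nu^2+(\text{averaging error of }\theta)$, made continuous and decreasing by taking suitable upper envelopes.

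For the convergence statements, $(c)$ follows from the Lebesgue differentiation theorem applied to the $L^1_{loc}$ form $(\idd\varphi)_{ac}$: the averages $(\idd\varphi)_{ac}*\chi_\delta$ converge to $(\idd\varphi)_{ac}$ at every Lebesgue point, and the singular part does not enter $\gamma_\nu$. For $(d)$, the key fact is Kiselman's formula
\begin{align*}
    \lim_{\delta\to0}\frac{\partial\Phi}{\partial\log\delta}(x,\delta)=\nu(\varphi,x).
\end{align*}
This yields $\lambda_\nu(x)\to0$ wherever the Lelong number vanishes, which by Siu's theorem is the complement of a countable union of analytic sets, hence almost everywhere. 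For $(e)$, when $\nu(\varphi,\cdot)\equiv0$, Dini's theorem applies to the decreasing family of upper semicontinuous functions $\partial\Phi/\partial\log\delta$ (decreasing in $\delta$ by convexity), giving uniform convergence to $0$ on compact sets.

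The main obstacle is the Hessian estimate, specifically controlling the non-holomorphic error of $\exp_x$ so that it is bounded by $\partial\Phi/\partial\log\delta$ rather than by $\sup|\varphi|$ (which could be infinite). I would first attempt this in holomorphic normal coordinates, comparing $\exp_x(\delta\zeta)$ with the holomorphic map $\zeta\mapsto x+\delta\zeta+\sum c_{jk\ell m}\delta^2\ldots$, and then use Taylor expansion and integration by parts in $\zeta$ to convert the second-order error into radial derivatives of $\Phi$.
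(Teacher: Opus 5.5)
The paper does not prove this statement; it quotes it from \cite{Dem82}, so your proposal has to stand on its own. The overall architecture is the standard one, and the soft steps are fine: mollification along the exponential map of $\omega$, $\partial\Phi/\partial\log\delta$ as a Lelong mass at scale $\delta$ tending to $\nu(\varphi,x)$, Lebesgue differentiation for $(c)$, the fact that $\{\nu(\varphi,\cdot)>0\}\subset\{\varphi=-\infty\}$ is Lebesgue-null, and Dini for $(e)$. Note that $\gamma_\nu\geq\theta$ holds exactly, since $(\idd\varphi)_{ac}-\theta\geq0$ almost everywhere.

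\textbf{First gap: the Hessian inequality.} This is the only step with real content, and you announce it rather than prove it; your sketch also does not isolate what must be shown. Because $x\mapsto\exp_x(\delta\zeta)$ is not holomorphic, $\idd_x[\varphi(\exp_x(\delta\zeta))]$ contains more than $\idd\varphi$ evaluated on transported vectors. It also contains the $(2,0)$- and $(0,2)$-parts of the real Hessian of $\varphi$ and terms linear in $d\varphi$, with coefficients of order $\delta^2$ coming from curvature. Differentiating the kernel $\chi(|\zeta|^2_{\omega(x)})$ in $x$ produces further terms. None of these has a sign, and the $(2,0)$-part of the Hessian of a psh function is not even a measure. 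After integrating by parts in $\zeta$, they become $O(1)$ multiples of integrals $\int\varphi(\exp_x(\delta\zeta))\,\kappa(\zeta)\,d\lambda(\zeta)$ with $\int\kappa\,d\lambda=0$, i.e. of the oscillation of $\varphi$ at scale $\delta$.

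The missing ingredient is a bound of these zero-mean averages by a quantity such as $\partial\Phi/\partial\log\delta$. That quantity must be built from the masses $\delta^{2-2n}\int_{B(x,C\delta)}\idd\varphi\wedge\omega^{n-1}$ and converge to a multiple of $\nu(\varphi,x)$, uniformly on compact sets when all Lelong numbers vanish. Without it the loss is controlled only by local bounds on $|\varphi|$, which are infinite at poles, so $(d)$ and $(e)$ are out of reach.

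The same issue affects your monotonicity claim. For the Riemannian exponential, $w\mapsto\varphi(\exp_x(w\zeta))$, $w\in\bb{C}$, is not subharmonic up to a bounded error, so a Kiselman-type argument does not apply as stated. This is one reason Demailly's later treatment uses the holomorphic part in $\zeta$ of the Taylor expansion of $\exp_x$. Monotonicity of $\Phi+\kappa\delta^2$ can instead be obtained from Green's formula on geodesic balls together with $\int_{B(x,\delta)}|d\varphi|\,dV=O(\delta^{2n-1})$ locally uniformly, but some argument must be given.

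\textbf{Second gap: the variable scale.} Since $M$ is not assumed compact and $\omega$ is not assumed complete, the admissible radius of $\exp_x$ is not bounded below, so $\delta_\nu$ must depend on $x$. It must then be smooth, not merely continuous, for $\varphi_\nu$ to be smooth. Your Hessian inequality is at fixed $\delta$, whereas $\idd_x[\Phi(x,\delta_\nu(x))]$ also contains $\partial_{\log\delta}\Phi\,\idd\log\delta_\nu$, $\partial^2_{\log\delta}\Phi\, i\partial\log\delta_\nu\wedge\dbar\log\delta_\nu$, and the mixed term $2\,\mathrm{Re}\,(i\partial_x(\partial_{\log\delta}\Phi)\wedge\dbar\log\delta_\nu)$.

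The mixed term cannot be estimated on its own. Already for $\varphi=\log|z_1|$ on flat $\bb{C}^n$, $|\partial_x\partial_{\log\delta}\Phi|$ is of order $\delta^{-1}$ at distance comparable to $\delta$ from $\{z_1=0\}$. Any term-by-term bound is therefore of order $\delta_\nu^{-1}|d\log\delta_\nu|$ there. For example, with $\delta_\nu=\epsilon_\nu\delta_0$ and $d\delta_0\neq0$ on $\{z_1=0\}$, this blows up near the divisor and cannot be absorbed into a continuous $\lambda_\nu$ that decreases in $\nu$.

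The mixed term is compensated only through the joint structure, so you need one of two devices for $(b)$ to follow:
\begin{itemize}
\item Complexify the scale. Note that $\Phi(x,w)$, $w\in\bb{C}$, depends only on $|w|$, and prove the lower bound for the complex Hessian of $\Phi$ jointly in $(x,w)$. Set $F(x,\sigma)=\Phi(x,e^\sigma)$ and $s=\log\delta_\nu$. The chain rule shows that $\idd_x[F(x,s(x))]$ is the joint Hessian of $F$ in $(x,\sigma+i\tau)$ evaluated on $(\xi,2\partial s(\xi))$, plus $\partial_\sigma F\,\idd s$. The only additional loss is then $\partial_{\log\delta}\Phi\cdot(\idd\log\delta_\nu)_-$, which tends to $0$ wherever $\nu(\varphi,x)=0$.
\item Take $\delta_\nu$ constant on an exhaustion $K_\nu\Subset M$. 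On each compact set the extra terms then occur for only finitely many $\nu$, and you can make $\lambda_\nu$ decreasing by taking finite maxima.
\end{itemize}
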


The proof of Theorem \ref{Theorem: [Theorem 5.1, Dem82]} proceeds as follows.
First, for each smooth approximation $\varphi_\nu$, we derive the desired $L^2$-estimate by applying H\"{o}rmander's elliptic-estimate with respect to the $n-q+1$-positive form $\theta$. 
The loss of positivity, measured by $\lambda_\nu$, is treated as an error term and is removed by passing to the limit as $\nu\to+\infty$.
Here, each $\lambda_\nu$ generally remains positive near the singular locus of $\varphi$, depending on the Lelong numbers, and therefore does not vanish pointwise. However, since $\lambda_\nu\to0$ almost everywhere on $M$, the error term disappears in the limit, which is the key ingredient in the argument.
Thus, condition $(d)$ is of fundamental importance. We should, however, note the following.

\begin{remark}
    At points where the Lelong number does not vanish, $\lambda_\nu$ remains strictly positive in a neighborhood of those points. 
    Of course, as $\nu\to+\infty$, $\lambda_\nu$ converges pointwise to $0$ on $M\setminus E_{+}(\varphi)=M\setminus\{x\in M\mid\nu(\varphi,x)>0\}$.
    However, this convergence does not extend to the values on $E_{+}(\varphi)$.
    Rather, the loss of positivity along $E_{+}(\varphi)$, measured by $\lambda_\nu$, persists with a definite amount of degeneracy, and $\lambda_\nu$ asymptotically behaves like a defining function for $E_{+}(\varphi)$.
    
    In particular, if $\varphi$ is merely strictly plurisubharmonic, one cannot expect that, for sufficiently large $\nu$, the approximation $\varphi_\nu$ becomes a smooth strictly plurisubharmonic function on the whole space $M$.
\end{remark}

\begin{remark}
    In fact, if \cite[Th\'{e}or\`{e}m 9.1]{Dem82} were able to construct a smooth strictly plurisubharmonic function from a strictly plurisubharmonic function, then a contradiction would arise.
\end{remark}

\begin{proof}
    Let $\pi:\tx\longrightarrow\bb{C}^n$ be the blow-up at a point $p$, and let $U$ be a relatively compact Stein neighborhood of $p$. 
    Then $\pi^{-1}(U)$ is a weakly pseudoconvex manifold, in particular, a $1$-convex manifold. However, it is not Stein, since it contains the exceptional divisor $E_p:=\pi^{-1}(x_0)\cong\bb{P}^{n-1}$ as a compact complex submanifold.
    Pulling back a smooth strictly plurisubharmonic function $\psi$ on $U$, the function $\pi^*\psi$ is a plurisubharmonic function that degenerates along $E_p$, while it is strictly plurisubharmonic on $\pi^{-1}(U)\setminus E_p$. 
    By the Negativity lemma (see \cite[Chapter VII, Proposition 12.4]{Dem-book}), there exists a smooth Hermitian metric $h_{E_p}^*$ on $\mathcal{O}_{\tx}(-E_p)$ induced by the normal bundle such that it compensates for the degeneracy of the positivity of $\idd\psi$ along $E_p$; namely, there exists $C>0$ such that $C\idd\pi^*\psi+\iO{\cal{O}_{\tx}(-E_p),h^*_{E_p}}>0$ on $\pi^{-1}(U)$.
    Hence, $h^*_{E_p}e^{-C\pi^*\psi}$ is a smooth positive Hermitian metric on $\cal{O}_{\tx}(-E_p)|_{\pi^{-1}(U)}$. Furthermore, consider the natural singular Hermitian metric $1/|\sigma|^2$ on $\cal{O}_{\tx}(E_p)$ induced by the defining section $\sigma$ of $E_p$. 
    This metric is singular semi-positive, and thus the singular Hermitian metric 
    \begin{align*}
        h^*_{E_p}e^{-C\pi^*\psi}\otimes\frac{1}{|\sigma|^2}=\frac{h^*_{E_p}}{|\sigma|^2}e^{-C\pi^*\psi}
    \end{align*}
    on $\cal{O}_{\tx}\cong\cal{O}_{\tx}(-E_p)\otimes\cal{O}_{\tx}(E_p)$ is singular positive. Therefore, the function
    \begin{align*}
        \varPsi:=-\log\Big(h_{E_p}^*e^{-C\pi^*\psi}\otimes\frac{1}{|\sigma|^2}\Big)=C\pi^*\psi-\log\frac{h_E^*}{|\sigma|^2}
    \end{align*}
    is strictly plurisubharmonic on $\pi^{-1}(U)$. In particular, locally we have $\Psi=\log|\sigma|^2+\text{smooth}$, and hence $E_p$ is the singular locus of $\varPsi$. 
    
    Applying \cite[Th\'{e}or\`{e}m 9.1]{Dem82} to this $\varPsi$, if $\lambda_\nu$ converges uniformly to $0$, then one can construct a smooth strictly plurisubharmonic function on $\pi^{-1}(U)$, which would imply that $\pi^{-1}(U)$ is Stein, leading to a contradiction.
\end{proof}

Here, a function $\varPsi:X\longrightarrow[-\infty,+\infty)$ on a complex space $X$ is \textit{exhaustion} if all sublevel sets $X_c:=\{x\in X\mid\varPsi(x)<c\}$, $\forall\,c\in\bb{R}$, are relatively compact. 
A complex space is said to be \textit{weakly} \textit{pseudoconvex} if there exists a smooth exhaustion plurisubharmonic function.

\begin{theorem}[{= Theorem\,\ref{Theorem: global L2-estimates}}]
    Let $X$ be a weakly pseudoconvex \kah complex space of pure dimension $n$ and $L\longrightarrow X$ be a holomorphic line bundle with a singular Hermitian metric $h$. 
    Let $E\longrightarrow X$ be a holomorphic vector bundle with a smooth Hermitian metric $h_E$.
    We assume that $h_E$ is Demailly $m$-semi-positive on $\reg$ and 
    $\iO{L,h}\geq\varepsilon\omega$ on $\reg$ in the sense of currents for some semi-positive smooth function $\varepsilon:X\longrightarrow\bb{R}_{\geq0}$ and a \kah metric $\omega$. 
    For any $q>0$ satisfying $m\geq\min\{n-q+1,\rom{rank}\,E\}$ and for any $f\in L^{2,loc}_{n,q}(X,E\otimes L,h_E\otimes h)$ satisfying $\dbar f=0$ on $\reg$ and $\int_X\frac{1}{\varepsilon}|f|^2_{h_E\otimes h,\omega}\,\dvo<+\infty$, 
    there exists $u\in L^2_{n,q-1}(X,E\otimes L;\omega,h_E\otimes h)$ satisfying $\dbar u=f$ on $\reg$ and 
    \begin{align*}
        \int_X|u|^2_{h_E\otimes h,\omega}\,\dvo\leq\frac{1}{q}\int_X\frac{1}{\varepsilon}|f|^2_{h_E\otimes h,\omega}\,\dvo.
    \end{align*}
    
    Furthermore, if $h_E$ is Nakano semi-positive on $\reg$, then the global $L^2$-estimate holds for every $q>0$. 
    Under the same assumption, even if $X$ is not \kah and $\omega$ is simply a Hermitian metric, the global $L^2$-estimate still holds only in the case $q=1$.
\end{theorem}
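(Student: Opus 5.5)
The plan is to reduce everything to Demailly's estimate, Theorem \ref{Theorem: [Theorem 5.1, Dem82]}, applied on the complex manifold $M:=\reg$ (or on relatively compact pieces of it), and then remove the auxiliary data by a weak limit. The key geometric input is that $\reg$ minus a suitable analytic set, or more precisely each sublevel set $X_c\cap\reg$, carries a complete \kah metric.

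\emph{Complete metrics.} Let $\varPsi$ be a smooth plurisubharmonic exhaustion of $X$ and set $X_c:=\{\varPsi<c\}$. On $X_c\cap\reg$, I would build a complete \kah metric as
\begin{align*}
\widehat\omega_c=\omega+\idd\bigl(\tfrac{1}{c-\varPsi}\bigr)+\idd\chi\bigl(-\log(-\log|s|^2)\bigr)\text{-type terms},
\end{align*}
where $s$ is a local generator system of the ideal of $\xs$, patched with a partition of unity over finitely many embedding charts covering $\overline{X_c}$. These are the standard Grauert/Demailly constructions showing that $\reg\cap X_c$ is complete \kah; $\xs$ is an analytic subset, and $\overline{X_c}$ is compact. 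Theorem \ref{Theorem: [Theorem 5.1, Dem82]} allows $\omega$ itself to be non-complete once a complete \kah metric exists. This step is where I expect the main technical work: one must check that the local functions $\log(-\log|s|^2)$ glue to a function with bounded $\partial$ relative to the new metric near $\xs$. I would do this by pulling back to the canonical desingularization of Theorem \ref{Theorem: canonical desingularization}, where $\pi^{-1}(\xs)$ is snc and $\pi^{-1}(X_c)$ is a relatively compact subset of a manifold.

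\emph{Curvature and the estimate on each piece.} On $X_c\cap\reg$, take $F:=E$ and let the weight $\varphi$ be $2\times$ the local weight of $h$ (twisting to make $L$ trivial locally). More cleanly, I would absorb $h$ into $F=E\otimes L$ with a smooth reference metric $h_0$ and $\varphi$ quasi-psh, since $\iO{L,h}\geq\varepsilon\omega\geq0$ on $\reg$. Then $(\idd\varphi)_{ac}+\iO{L,h_0}\geq\varepsilon\omega$ a.e. For the tensor product, $\iO{E}\otimes\mathrm{id}+\varepsilon\omega\otimes\mathrm{id}_E\geq_{m}0$. The standard eigenvalue computation for $(n,q)$-forms gives
\begin{align*}
\langle[\iO{E,h_E}+\varepsilon\omega\otimes\mathrm{id},\Lambda_\omega]f,f\rangle\geq q\varepsilon|f|^2,
\end{align*}
using that the $E$-part contributes nonnegatively when $m\geq\min\{n-q+1,\rom{rank}E\}$ (Demailly's lemma on $m$-positivity and $[\cdot,\Lambda]$ on $(n,q)$-forms), and that $[\omega,\Lambda_\omega]=q$ on $(n,q)$-forms. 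Hence $\langle[\cdot,\Lambda]^{-1}f,f\rangle\leq\frac{1}{q\varepsilon}|f|^2$. Theorem \ref{Theorem: [Theorem 5.1, Dem82]} then yields $u_c$ on $X_c\cap\reg$ with $\dbar u_c=f$ and
\begin{align*}
\|u_c\|^2\leq\frac1q\int_X\frac1\varepsilon|f|^2dV_\omega.
\end{align*}
Points where $\varepsilon=0$ force $f=0$ a.e. there by finiteness of the right-hand side, so the inverse is interpreted as usual.

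\emph{Limit and the remaining cases.} The bounds are uniform in $c$. A diagonal weak-$L^2$ limit as $c\to\infty$ gives $u$ on $\reg$ with $\dbar u=f$ in the distribution sense and the same bound. Weak lower semicontinuity of the norm gives the stated estimate. Under Nakano semi-positivity the $E$-term is nonnegative for all $q$, so the estimate holds for every $q>0$. In the non-\kah case with $q=1$, I would use that for $(n,1)$-forms the torsion terms in the Bochner--Kodaira--Nakano identity can be avoided: via the Demailly/Ohsawa twisted argument for $(n,0)\to(n,1)$, the estimate only needs $\dbar^*$ on $(n,1)$-forms, whose identity involves no torsion for top-degree $p=n$, $q=1$ when tested against $\dbar$-closed forms in the range $\ker\dbar$. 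I would argue via duality with $(n,0)$-forms, applied on the same complete exhausting pieces.
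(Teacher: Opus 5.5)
For the \kah statements your plan is the paper's proof. You write $h=h_0e^{-\varphi}$ with $\varphi$ quasi-plurisubharmonic on $\reg$, get $[\iO{E\otimes L,h_E\otimes h_0}+(\idd\varphi)_{ac},\Lambda_\omega]\geq q\varepsilon$ on $(n,q)$-forms from the $m$-semi-positivity, apply Theorem~\ref{Theorem: [Theorem 5.1, Dem82]} on each $(X_{c_j})_{reg}$ with the non-complete \kah metric $\omega$, and take a weak limit. The complete \kah metric on $(X_{c_j})_{reg}$, which you single out as the main technical work, is simply quoted in the paper from [Dem82, Th\'eor\`eme~1.5]. The problem is the non-\kah case $q=1$.

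There your justification is wrong. For a non-\kah metric $\omega$, Demailly's Bochner--Kodaira--Nakano identity for Hermitian metrics carries the zero-order torsion term $T_\omega=[\Lambda_\omega,[\Lambda_\omega,\tfrac{i}{2}\partial\dbar\omega]]-[\partial\omega,(\partial\omega)^*]$, and this term does not drop out on $(n,1)$-forms. The first part vanishes for bidegree reasons. But $\partial\omega\wedge\alpha=0$ for an $(n,1)$-form $\alpha$, so $\lara{T_\omega\alpha}{\alpha}=-|(\partial\omega)^*\alpha|^2$, where $(\partial\omega)^*\alpha$ is an $(n-2,0)$-form that is nonzero for suitable $\alpha$ wherever $\partial\omega\neq0$. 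This term has the wrong sign in the a priori inequality. ``Duality with $(n,0)$-forms'' cannot avoid it, because producing an $(n,0)$-solution of $\dbar u=f$ by duality requires precisely an estimate $|\lara{f}{\alpha}|^2\leq C\|\dbar^*\alpha\|^2$ for $(n,1)$-forms $\alpha$. A sign that the key idea is missing: your argument for this case never uses the Nakano semi-positivity of $h_E$.

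The paper uses a metric comparison instead. It keeps a complete \kah metric $\widehat\omega_j\geq\omega$ on $(X_{c_j})_{reg}$, again from [Dem82, Th\'eor\`eme~1.5]; only the metric in which $f$ and $u$ are measured is merely Hermitian. It then applies Theorem~\ref{Theorem: [Theorem 5.1, Dem82]} with respect to $\widehat\omega_j$. Lemma~6.3 in Chapter~VIII of Demailly's book, which is where Nakano semi-positivity enters, gives (weights suppressed)
\begin{align*}
    \lara{[\Theta,\Lambda_{\widehat\omega_j}]^{-1}f}{f}_{\widehat\omega_j}dV_{\widehat\omega_j}\leq\lara{[\Theta,\Lambda_\omega]^{-1}f}{f}_{\omega}dV_\omega,\qquad\Theta:=\iO{E\otimes L,h_E\otimes h_0}+(\idd\varphi)_{ac}.
\end{align*}
For the $(n,0)$-form $u_j$ one has $|u_j|^2_{\widehat\omega_j}dV_{\widehat\omega_j}=|u_j|^2_\omega dV_\omega$ by Lemma~\ref{Lemma: inequality of (n,0)-forms}. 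So the bound transfers back to $\omega$, and the weak limit is taken as before. This is exactly why only $q=1$ survives: for $q\geq2$ the solution is an $(n,q-1)$-form with $q-1\geq1$, and then $|u|^2_{\widehat\omega_j}dV_{\widehat\omega_j}\leq|u|^2_\omega dV_\omega$ points the wrong way. Note also that your first step builds the complete metric out of $\omega$, which is no longer \kah here. So in this case you need a separate complete \kah metric dominating $\omega$ in any event.
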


The curvature condition on $h$ is required only on $\reg$, and is therefore weaker than singular positivity. 
Furthermore, if $X$ is normal, or if $X$ is locally irreducible and any weight function is locally bounded from above in a neighborhood of $\xs$, then these two conditions are equivalent (see Proposition \ref{Proposition: curvature condition of sHm}).

\begin{proof}
    Fix a smooth Hermitian metric $h_0$ on $L$. Then there exists a globally defined locally integrable weight function $\varphi\in L^1_{loc}(X)$ such that $h=h_0e^{-\varphi}$.
    By the curvature assumption on $h$, the function $\varphi$ is quasi-plurisubharmonic on $\reg$. 
    Fix $q>0$ satisfying $m\geq\min\{n-q+1,\rom{rank}\,E\}$. 
    Since
    \begin{align*}
        \iO{E\otimes L,h_E\otimes h}=\iO{E\otimes L,h_E\otimes h_0}+\idd\varphi\geq\iO{E,h_E}\oid{L}+\varepsilon\omega\oid{E\otimes L}\geq_{n-q+1}\varepsilon\omega\oid{E\otimes L}
    \end{align*}
    holds on $\reg$ in the sense of currents, it follows that 
    \begin{align*}
        \iO{E\otimes L,h_E\otimes h_0}+(\idd\varphi)_{ac}\geq_{n-q+1}\varepsilon\omega\oid{E\otimes L}
    \end{align*}
    holds on $\reg$. Furthermore, for any $(n,q)$-forms, we obtain 
    \begin{align*}
        [\iO{E\otimes L,h_E\otimes h_0}+(\idd\varphi)_{ac},\Lambda_\omega]\geq q\varepsilon\,\rom{id}_{E\otimes L}
    \end{align*}
    on $\bigwedge^{n,q}T^*_{\reg}\otimes E\otimes L$ (see \cite[Lemma 2.4]{Wat25a}).

    We choose an increasing sequence of real numbers $\{c_j\}_{j\in\bb{N}}$ diverging to infinity such that $c_1>\inf_X \varPsi$. 
    For every $j\in\bb{N}$, the above inequality yields 
    \begin{align*}
        \int_{X_{c_j}}\lara{[\iO{E\otimes L,h_E\otimes h_0}\!+\!(\idd\varphi)_{ac},\Lambda_\omega]^{-1}f}{f}_{h_E\otimes h_0,\omega}e^{-\varphi}dV_\omega
        &\leq\frac{1}{q}\int_{X_{c_j}}\frac{1}{\varepsilon}|f|^2_{h_E\otimes h_0,\omega}e^{-\varphi}dV_\omega\\
        &\leq\frac{1}{q}\int_{X}\frac{1}{\varepsilon}|f|^2_{h_E\otimes h,\omega}dV_\omega.
    \end{align*}
    For any $c>\inf_X\varPsi$, the regular locus $(X_c)_{reg}$ admits a complete \kah metric (see \cite[Th\'{e}or\`{e}m 1.5]{Dem82}). 
    Therefore, by applying Theorem \ref{Theorem: [Theorem 5.1, Dem82]}, there exists $u_j\in L^2_{n,q-1}(X_{c_j},$ $E\otimes L;\omega,h_E\otimes h_0e^{-\varphi})$ such that $\dbar u_j=f$ on $(X_{c_j})_{reg}$ and satisfying the $L^2$-estimate 
    \begin{align*}
        \int_{X_{c_j}}|u_j|^2_{h_E\otimes h,\omega}dV_\omega&=\int_{X_{c_j}}|u_j|^2_{h_E\otimes h_0,\omega}e^{-\varphi}dV_\omega\\
        &\leq\int_{X_{c_j}}\lara{[\iO{E\otimes L,h_E\otimes h_0}\!+\!(\idd\varphi)_{ac},\Lambda_\omega]^{-1}f}{f}_{h_E\otimes h_0,\omega}e^{-\varphi}dV_\omega\\
        &\leq\frac{1}{q}\int_{X}\frac{1}{\varepsilon}|f|^2_{h_E\otimes h,\omega}dV_\omega.
    \end{align*}
    Hence, by applying \cite[Lemma 3.18]{Wat25a} for the index $j$, we obtain the desired solution $u\in L^2_{n,q-1}(X,E\otimes L;\omega,h_E\otimes h)$ as the weak limit of a convergent subsequence, satisfying the required $\dbar$-equation $\dbar u=f$ on $\reg$ and the global $L^2$-estimate.

    Finally, we prove that the same statement holds for $q=1$ when $h_E$ is Nakano semi-positive on $\reg$, $X$ is not necessarily \kah and $\omega$ is simply a Hermitian metric. 
    First, as in the above argument, we obtain $\iO{E\otimes L,h_E\otimes h_0}+(\idd\varphi)_{ac}\geq_{1}\varepsilon\omega\oid{E\otimes L}>0$ on $\reg$ and $[\iO{E\otimes L,h_E\otimes h_0}+(\idd\varphi)_{ac},\Lambda_\omega]\geq q\varepsilon\,\rom{id}_{E\otimes L}$ on $\bigwedge^{n,q}T^*_{\reg}\otimes E\otimes L$ for any $q>0$. 
    We choose a complete \kah metric $\widehat{\omega}_j$ on $(X_{c_j})_{reg}$ such that $\omega<\widehat{\omega}_j$ (see \cite[Th\'{e}or\`{e}m 1.5]{Dem82}). 
    By applying \cite[Chapter VIII, Lemma 6.3]{Dem-book}, for any $f\in L^{2,loc}_{n,q}(X,E\otimes L,h_E\otimes h)$, we obtain the integral inequality  
    \begin{align*}
        \int_{X_{c_j}}&\lara{[\iO{E\otimes L,h_E\otimes h_0}\!+\!(\idd\varphi)_{ac},\Lambda_{\widehat{\omega}_j}]^{-1}f}{f}_{h_E\otimes h_0,\widehat{\omega}_j}e^{-\varphi}dV_{\widehat{\omega}_j}\\
        &\hspace{16mm}\leq\int_{X_{c_j}}\lara{[\iO{E\otimes L,h_E\otimes h_0}\!+\!(\idd\varphi)_{ac},\Lambda_\omega]^{-1}f}{f}_{h_E\otimes h_0,\omega}e^{-\varphi}dV_\omega.
    \end{align*}
    
    By applying Theorem \ref{Theorem: [Theorem 5.1, Dem82]} with respect to this complete \kah metric $\widehat{\omega}_j$, there exists $u_j\in L^2_{n,q-1}(X_{c_j},E\otimes L;\widehat{\omega}_j,h_E\otimes h)$ such that $\dbar u_j=f$ on $(X_{c_j})_{reg}$ and 
    \begin{align*}
        \int_{X_{c_j}}|u_j|^2_{h_E\otimes h,\widehat{\omega}_j}dV_{\widehat{\omega}_j}
        &\leq\int_{X_{c_j}}\lara{[\iO{E\otimes L,h_E\otimes h_0}\!+\!(\idd\varphi)_{ac},\Lambda_{\widehat{\omega}_j}]^{-1}f}{f}_{h_E\otimes h_0,\widehat{\omega}_j}e^{-\varphi}dV_{\widehat{\omega}_j}.
    \end{align*}
    In particular, when $q=1$, by applying Lemma \ref{Lemma: inequality of (n,0)-forms} below, we obtain 
    \begin{align*}
        u_j\in L^2_{n,0}(X_{c_j},E\otimes L;\omega,h_E\otimes h)=L^2_{n,0}(X_{c_j},E\otimes L;\widehat{\omega}_j,h_E\otimes h),
    \end{align*}
    together with the integral inequality 
    \begin{align*}
        \int_{X_{c_j}}|u_j|^2_{h_E\otimes h,\omega}dV_{\omega}&=\int_{X_{c_j}}|u_j|^2_{h_E\otimes h,\widehat{\omega}_j}dV_{\widehat{\omega}_j}\\
        &\leq\int_{X_{c_j}}\lara{[\iO{E\otimes L,h_E\otimes h_0}\!+\!(\idd\varphi)_{ac},\Lambda_{\widehat{\omega}_j}]^{-1}f}{f}_{h_E\otimes h_0,\widehat{\omega}_j}e^{-\varphi}dV_{\widehat{\omega}_j}\\
        &\leq\int_{X_{c_j}}\lara{[\iO{E\otimes L,h_E\otimes h_0}\!+\!(\idd\varphi)_{ac},\Lambda_\omega]^{-1}f}{f}_{h_E\otimes h_0,\omega}e^{-\varphi}dV_\omega\\
        &\leq\frac{1}{q}\int_{X}\frac{1}{\varepsilon}|f|^2_{h_E\otimes h,\omega}dV_\omega.
    \end{align*}
    Hence, by applying \cite[Lemma 3.18]{Wat25a} for the index $j$, we obtain the desired solution $u\in L^2_{n,0}(X,E\otimes L;\omega,h_E\otimes h)$ as the weak limit of a convergent subsequence, satisfying the required $\dbar$-equation $\dbar u=f$ on $\reg$ and the global $L^2$-estimate. 
\end{proof}

\begin{lemma}\label{Lemma: inequality of (n,0)-forms}
    Let $\gamma_1$ and $\gamma_2$ be Hermitian metrics on $X$ with $\gamma_1\geq\gamma_2$. 
    Then we have 
    \begin{itemize}
        \item the equality $|u|^2_{\gamma_1} dV_{\gamma_1}=|u|^2_{\gamma_2} dV_{\gamma_2}$ holds for any $(n,0)$-form $u$, 
        \item the inequality $|u|^2_{\gamma_1} dV_{\gamma_1}\leq|u|^2_{\gamma_2} dV_{\gamma_2}$ holds for any $(n,q)$-form $u$ and any $q\geq1$.
    \end{itemize}
\end{lemma}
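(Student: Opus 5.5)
The statement is pointwise linear algebra on $\reg$. Fix a point $x\in\reg$ and choose coordinates such that $\gamma_2$ is the identity and $\gamma_1$ is diagonal with eigenvalues $\lambda_1,\dots,\lambda_n\geq1$; this is possible because $\gamma_1\geq\gamma_2$. Equivalently, pick a $\gamma_2$-orthonormal coframe $dz_1,\dots,dz_n$ at $x$ in which $\gamma_1=i\sum_j\lambda_j dz_j\wedge d\overline{z}_j$. Then the volume forms compare by
\begin{align*}
dV_{\gamma_1}=\lambda_1\cdots\lambda_n\,dV_{\gamma_2}.
\end{align*}
The pointwise norms compare through the induced metrics on $(p,q)$-covectors. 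The norm of $dz_J\wedge d\overline{z}_K$ with respect to $\gamma_1$ is $\prod_{j\in J}\lambda_j^{-1}\prod_{k\in K}\lambda_k^{-1}$ times its $\gamma_2$-norm, and distinct multi-indices remain orthogonal for both metrics.

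\emph{First item.} An $(n,0)$-form is $u=f\,dz_1\wedge\cdots\wedge dz_n$, so
\begin{align*}
|u|^2_{\gamma_1}=|f|^2(\lambda_1\cdots\lambda_n)^{-1}=(\lambda_1\cdots\lambda_n)^{-1}|u|^2_{\gamma_2}.
\end{align*}
Multiplying by $dV_{\gamma_1}$, the factor $(\lambda_1\cdots\lambda_n)^{-1}$ cancels exactly, which gives the equality. Equivalently, one can note that $|u|^2_\gamma dV_\gamma=i^{n^2}u\wedge\overline{u}$ is independent of $\gamma$, and this gives a coordinate-free argument.

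\emph{Second item.} Write $u=\sum_{|K|=q}u_K\,dz_1\wedge\cdots\wedge dz_n\wedge d\overline{z}_K$. By the orthogonality noted above,
\begin{align*}
|u|^2_{\gamma_1}dV_{\gamma_1}=\sum_K|u_K|^2\prod_{k\in K}\lambda_k^{-1}\,dV_{\gamma_2}\leq\sum_K|u_K|^2dV_{\gamma_2}=|u|^2_{\gamma_2}dV_{\gamma_2},
\end{align*}
because each $\lambda_k\geq1$. The factor $\prod_{k\in K}\lambda_k^{-1}$ is the only place where the hypothesis $\gamma_1\geq\gamma_2$ enters.

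The main obstacle is simply keeping the bookkeeping straight: the $(n,0)$ part of $u$ contributes a factor $\lambda_1^{-1}\cdots\lambda_n^{-1}$ that is exactly absorbed by the volume form, while the antiholomorphic part contributes the leftover factors $\lambda_k^{-1}\leq1$. Once this is in place, both assertions hold pointwise on $\reg$, hence as measures, which is the form used in the proof above.
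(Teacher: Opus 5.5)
Your argument is correct. It is the standard pointwise computation: simultaneously diagonalize $\gamma_2$ and $\gamma_1$ at a point of $\reg$, and observe that the factor $dV_{\gamma_1}=\lambda_1\cdots\lambda_n\,dV_{\gamma_2}$ exactly absorbs the $(n,0)$-part, while the leftover factors $\lambda_k^{-1}\leq1$ produce the inequality. The paper states this lemma without proof and uses it next to Demailly's book, Chapter VIII, Lemma 6.3, which rests on this same computation.

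The only slip is in wording. The factor $\prod_{j\in J}\lambda_j^{-1}\prod_{k\in K}\lambda_k^{-1}$ is the ratio of the \emph{squared} norms of $dz_J\wedge d\overline{z}_K$, not of the norms, and that is how you actually use it. The same computation also carries over verbatim to the $E\otimes L$-valued forms to which the paper applies the lemma, because changing $\gamma$ does not affect the fibre metric.
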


\begin{remark}\label{Remark: complete Kahler}
    If a weakly pseudoconvex complex space $X$ carries a \kah metric, then for any $c>\inf_X\varPsi$, the regular locus $(X_c)_{reg}$ admits a complete \kah metric \cite[Th\'{e}or\`{e}m 1.5]{Dem82}. 
    In particular, even after removing an additional analytic subset $A$, the space $(X_c)_{reg}\setminus A$ still carries a complete \kah metric. 
    However, in general, without relative compactness it is unclear whether $\reg$ admits a complete \kah metric. 
    
    On the other hand, when $X$ has no singularities, if the analytic subset $Z$ to be removed is compact, or if $Z$ is the zero locus $Z=\sigma^{-1}(0)$ of a holomorphic section $\sigma$ of a vector bundle admitting a smooth Hermitian metric whose Griffiths positivity is bounded above, 
    then $X\setminus Z$ admits a complete \kah metric globally $($see \cite[Lemma 11.9]{Dem12}$)$. 
\end{remark}

If $X$ is compact or a $1$-convex space, in particular Stein, it is known that $\reg$ and $\reg\setminus A$ admit complete \kah metrics (see \cite{Dem82}).

\begin{conjecture}\label{Conjecture: no existence of complete Kahler}
    Let $X$ be a weakly pseudoconvex \kah complex space. Does there exist an example for which $\reg$ does not admit a complete \kah metric? 
    Furthermore, when $X$ is a weakly pseudoconvex \kah manifold, does there exist an analytic subset $A$ such that $X\setminus A$ does not admit a complete \kah metric?
\end{conjecture}

\section{The weak $\overline{\partial}$-operator and its $L^2$-Dolbeault complex}\label{Section: dbar-operator and L2-Dolbeault complexes}

In this section, we provide proofs of Theorems \ref{Theorem: fine Dolbeault resolution} and \ref{Theorem: L2-Dolbeault isomorphism}.
Let $X$ be a complex space of pure dimension $n$ with a Hermitian metric $\omega$, $L\longrightarrow X$ be a holomorphic line bundle with a singular Hermitian metric $h$ and $U\subset X$ be an open subset. 
Recall that 
\begin{align*}
    L^{2,loc}_{p,q}(U,L,h):=\Bigl\{u\in L^{2,loc}_{p,q}(\ureg,L,h)\,\Big|\, u|_{K_{reg}}\in L^2_{p,q}(K_{reg},L,h),\forall\,K\Subset U\Bigr\}.
\end{align*}  
It is easy to check that the presheaves given as $\scr{L}^{2,loc}_{p,q}(L,h)(U):=L^{2,loc}_{p,q}(U,L,h)$ are already sheaves $\scr{L}^{2,loc}_{p,q}(L,h)\longrightarrow X$. 
On $L^{2,loc}_{p,q}(U,L,h)$, we denote by 
\begin{align*}
    \dbar(U):L^{2,loc}_{p,q}(U,L,h)\longrightarrow L^{2,loc}_{p,q+1}(U,L,h)
\end{align*}
the $\dbar$-operator in the sense of distributions on $\ureg=U\setminus\xs$ which is closed and densely defined. 
This $\dbar$-operator is often denoted by $\dbar_w$ or $\dbar_{max}$.
Since $\dbar$ is a local operator, 
we can define the presheaves of germs of forms in the domain of $\dbar$, 
\begin{align*}
    \scr{L}^{p,q}_{L,h}:=\scr{L}^{2,loc}_{p,q}(L,h)\cap \dbar^{-1}\scr{L}^{2,loc}_{p,q+1}(L,h),
\end{align*}
given by $\scr{L}^{p,q}_{L,h}(U)=\scr{L}^{2,loc}_{p,q}(L,h)(U)\cap\rom{Dom}\,\dbar(U)$. 
This sheaf $\scr{L}^{p,q}_{L,h}$ is well known when $X$ is smooth (see \cite{Dem12,Wat25a}), and admits various types of $L^2$-Dolbeault resolutions (see \cite{Rup14,SZ23,SZ24,Wat25a,Wat26}). 
Furthermore, it is easy to see that the sheaves $\scr{L}^{p,q}_{L,h}$ admit partitions of unity, then we obtain sequences of fine sheaves 
\begin{align*}
    \scr{L}^{p,0}_{L,h}\overset{\dbar}{\longrightarrow}\scr{L}^{p,1}_{L,h}\overset{\dbar}{\longrightarrow}\scr{L}^{p,2}_{L,h}\overset{\dbar}{\longrightarrow}\cdots.
\end{align*}
For a holomorphic vector bundle $E\longrightarrow X$ with a smooth Hermitian metric $h_E$, we define the sheaf $\scr{L}^{p,q}_{E\otimes L,h_E\otimes h}$ in the same way.

Let $\pi:\tx\longrightarrow X$ be a canonical desingularization. 
Here, the smooth metric $\pi^*\omega$ is positive on $\tx\setminus \exc$ and semi-positive on $\tx$.
Let $\gamma$ be a Hermitian metric on $\tx$, then $\pi^*\omega\lesssim\gamma$ and $\pi^*\omega\sim\gamma$ on compact subsets of $\tx\setminus \exc$.
Since $\gamma$ is positive and $\pi^*\omega$ is semi-positive, there exists a smooth function $g\in \cal{C}^\infty(\tx,\bb{R}_{\geq0})$ such that $dV_{\pi^*\omega}=g\,dV_\gamma$, where $g>0$ on $\tx\setminus \exc$. 
It follows immediately that $g|u|^2_{\pi^*\omega}=|u|^2_\gamma$ for any $(n,0)$-form $u$, and that $|u|^2_\gamma\lesssim_{\tl{U}} g|u|^2_{\pi^*\omega}$ on $\tl{U}\Subset\tx$ for any $(n,q)$-form $u$, hence  
\begin{align*}
    \int_{\tl{U}}|u|^2_\gamma dV_\gamma \lesssim_{\tl{U}} \int_{\tl{U}}g|u|^2_{\pi^*\omega}g^{-1}dV_{\pi^*\omega}=\int_{\tl{U}}|u|^2_{\pi^*\omega}dV_{\pi^*\omega},
\end{align*}
here $1\leq q\leq n$.
Therefore, we obtain 
\begin{align*}
    L^2_{n,q}(\tl{U},\pi^*\omega)\subset L^2_{n,q}(\tl{U},\gamma) \quad \text{and} \quad L^2_{n,q}(\tl{U},\pi^*L;\pi^*\omega,\pi^*h)\subset L^2_{n,q}(\tl{U},\pi^*L;\gamma,\pi^*h),
\end{align*}
for any open subset $\tl{U}\Subset \tx$. 
The pullback of forms under $\pi$ gives the isometry 
\begin{align*}
    \pi^*:L^2_{p,q}(\ureg,L;\omega,h)\longrightarrow L^2_{p,q}(\pi^{-1}(U)\!\setminus\!E,\pi^*L;\pi^*\omega,\pi^*h)=L^2_{p,q}(\pi^{-1}(U),\pi^*L;\pi^*\omega,\pi^*h).
\end{align*}
Hence, together with the above inclusions of $L^2$-spaces, if $U\Subset X$ is a relatively compact open subet, then the following map induced by $\pi^*$ is continuous: 
\begin{align*}
    \pi^*:L^2_{n,q}(\ureg,L;\omega,h)\longrightarrow L^2_{n,q}(\pi^{-1}(U),\pi^*L;\gamma,\pi^*h).  \tag{$\ast$}
\end{align*}

\begin{proposition}\label{Proposition: Ker dbar}
    If any local weight of $h$ is uniformly bounded from above almost everywhere, then we have the isomorphism $\ogr(h)\otimes\cal{O}_X(L)\cong\rom{Ker}\,\dbar_X\subset\scr{L}^{n,0}_{L,h}$.
\end{proposition}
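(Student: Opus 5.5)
We need to identify the kernel of $\dbar$ on $\scr{L}^{n,0}_{L,h}$ with $\ogr(h)\otimes\cal{O}_X(L)$. The argument is local, so we work on a small open set $U$ on which $L$ is trivialized, with $h=e^{-2\varphi}$ and $\varphi\le C$ almost everywhere. A section of $\rom{Ker}\,\dbar_X$ over $U$ is then an $(n,0)$-form $u$ with measurable coefficients on $\ureg$ such that $\dbar u=0$ in the sense of distributions on $\ureg$ and $|u|^2e^{-2\varphi}$ is integrable on $K_{reg}$ for every $K\Subset U$. The sheaf $\ogr(h)\otimes\cal{O}_X(L)$ over $U$ consists of the holomorphic $n$-forms $s$ on $\ureg$ with $i^{n^2}s\wedge\overline{s}\,e^{-2\varphi}\in L^1_{loc}(U)$.

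\textbf{Step 1 (the inclusion $\ogr(h)\otimes\cal{O}_X(L)\subset\rom{Ker}\,\dbar_X$).} For an $(n,0)$-form we have $|s|^2_\omega dV_\omega=c_n\, i^{n^2}s\wedge\overline{s}$, so the pointwise norm is independent of $\omega$. Hence the defining condition of $\ogr(h)$ is exactly local square integrability of $s$ with respect to $h$ on $K_{reg}$ for all $K\Subset U$. Since $s$ is holomorphic, $\dbar s=0$, so $s\in\scr{L}^{n,0}_{L,h}(U)$ and $s$ lies in the kernel. The upper bound on $\varphi$ is not needed in this direction.

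\textbf{Step 2 (holomorphy of kernel elements).} Let $u\in\rom{Ker}\,\dbar_X(U)$. Because $\varphi\le C$ a.e., we have $e^{-2\varphi}\ge e^{-2C}$, and therefore
\begin{align*}
\int_{K_{reg}}|u|^2_\omega\,dV_\omega\le e^{2C}\int_{K_{reg}}|u|^2_{h,\omega}\,dV_\omega<+\infty .
\end{align*}
So $u$ is in $L^2_{loc}$ on $\ureg$ in the unweighted sense, and in particular in $L^1_{loc}(\ureg)$. On each coordinate chart of the manifold $\ureg$, the equation $\dbar u=0$ in distributions together with ellipticity of $\dbar$ on $(n,0)$-forms (Weyl's lemma) shows that $u$ coincides a.e. with a holomorphic $n$-form. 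After modification on a null set, $u\in\Gamma(\ureg,\omega_{\reg})$.

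\textbf{Step 3 (membership in $\ogr(h)$).} For the holomorphic representative $u$, the integrability of $|u|^2_{h,\omega}dV_\omega=c_n\,i^{n^2}u\wedge\overline{u}\,e^{-2\varphi}$ on each $K_{reg}$, $K\Subset U$, is precisely the condition $i^{n^2}u\wedge\overline{u}\,e^{-2\varphi}\in L^1_{loc}(U)$ in the paper's convention for $L^1_{loc}(U)$. Thus $u\in\ogr(h)\otimes\cal{O}_X(L)(U)$.

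The two maps in Steps 1–3 are mutually inverse, they commute with restrictions, and they are $\cal{O}_X$-linear. This gives the sheaf isomorphism.

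The main obstacle is Step 2, namely ensuring that a merely measurable, weighted-$L^2$ distributional solution is an honest holomorphic form. The uniform upper bound on the weight is what converts weighted $L^2_{loc}$ into unweighted $L^2_{loc}$, hence into $L^1_{loc}$, so that Weyl's lemma applies. This only needs to happen on $\ureg$; no extension across $\xs$ is required, because $\ogr$ is defined through forms on $\reg$ with an integrability condition measured near $\xs$.
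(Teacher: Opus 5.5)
Your argument is correct, but it takes a more direct route than the paper. The paper works through the canonical desingularization $\pi:\tx\longrightarrow X$, in four steps:
\begin{enumerate}
\item It uses the isometry $\pi^*$ on $(n,0)$-forms together with the $\dbar$-extension lemma across $\exc$ to build a morphism of complexes $\pi^*:(\scr{L}^{n,\ast}_{L,h},\dbar)\to(\pi_*\scr{L}^{n,\ast}_{\pi^*L,\pi^*h},\pi_*\dbar)$.
\item It identifies $\rom{Ker}\,\dbar_{\tx}$ with $K_{\tx}\otimes\pi^*L\otimes\scr{I}(\pi^*h)$ using the known result on manifolds.
\item It applies the functoriality $\pi_*(K_{\tx}\otimes\scr{I}(\pi^*\varphi))=\ogr(\varphi)$.
\item It concludes that the left vertical arrow of its diagram is an isomorphism.
\end{enumerate}

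You instead note that $\ogr(h)$ is defined through holomorphic $n$-forms on $\reg$, with integrability measured on $K_{reg}$. Since $|s|^2_\omega\,dV_\omega$ is metric-independent for $(n,0)$-forms, the membership conditions on both sides already coincide, and the only content is regularity on the manifold $\ureg$. There the upper bound on the weight gives unweighted $L^2_{loc}$, and Weyl's lemma gives holomorphy. No resolution, extension across $\exc$, or functoriality is needed.

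Your route is more economical and uses exactly the stated hypothesis, namely an a.e.\ upper bound on the weight. The functoriality result the paper cites is formulated for quasi-plurisubharmonic weights, so the paper's route implicitly needs a version of it for merely bounded-above weights.

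The paper's route, in turn, sets up at the same time the commutative diagram comparing the $L^2$-Dolbeault complex on $X$ with the pushed-forward complex on $\tx$. That diagram is then reused in the proofs of Theorems \ref{Theorem: L2-Dolbeault isomorphism} and \ref{Theorem: fine Dolbeault resolution}.
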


\begin{proof}  
    It follows from $(\ast)$ that $\pi^*$ defines a morphism of $L^2$-Dolbeault complexes 
    \begin{align*}
        \pi^*:(\scr{L}^{n,\ast}_{L,h},\dbar)\longrightarrow (\pi_*(\scr{L}^{n,\ast}_{\pi^*L,\pi^*h}),\pi_*\dbar), 
    \end{align*} 
    as follows.
    Let $U\subset X$ be an open subset, and let $u\in\scr{L}^{n,q}_{L,h}(U)$ and $f\in\scr{L}^{n,q+1}_{L,h}(U)$ be chosen such that $\dbar u=f$.
    By $(\ast)$, we have $\pi^*u\in\scr{L}^{2,loc}_{n,q}(\pi^{-1}(U),\pi^*L,\pi^*h)$ and $\pi^*f\in\scr{L}^{2,loc}_{n,q+1}(\pi^{-1}(U),\pi^*L,\pi^*h)$ with $\dbar\pi^*u=\pi^*f$ on $\pi^{-1}(U)\setminus \exc$. 
    It follows from the $\dbar$-extension lemma (see \cite[Lemma 6.9]{Dem82}) that $\dbar\pi^*u=\pi^*f$ on $\pi^{-1}(U)$. 
    Hence, we obtain $\pi^*u\in\scr{L}^{n,q}_{\pi^*L,\pi^*h}(\pi^{-1}(U))$ and $\pi^*f\in\scr{L}^{n,q+1}_{\pi^*L,\pi^*h}(\pi^{-1}(U))$, and the above map $\pi^*$ is in fact a morphism of $L^2$-Dolbeault complexes. 
    Including $\rom{Ker}\,\dbar_X:=\rom{Ker}\,(\dbar_X:\scr{L}^{n,0}_{L,h}\longrightarrow\scr{L}^{n,1}_{L,h})\subset\scr{L}^{n,0}_{L,h}$ and 
    $\rom{Ker}\,\dbar_{\tx}:=\rom{Ker}\,(\dbar_{\tx}:\scr{L}^{n,0}_{\pi^*L,\pi^*h}\longrightarrow\scr{L}^{n,1}_{\pi^*L,\pi^*h})\subset\scr{L}^{n,0}_{\pi^*L,\pi^*h}$, we obtain the follows commutative diagram:
\[
\small
\begin{CD}
0 @>>> \rom{Ker}\,\dbar_X
  @>>> \scr{L}^{n,0}_{L,h}
  @>{\dbar_X}>> \scr{L}^{n,1}_{L,h}
  @>{\dbar_X}>> \scr{L}^{n,2}_{L,h}
  @>{\dbar_X}>>  \\
@. @VV{\pi^*}V
   @VV{\pi^*}V
   @VV{\pi^*}V
   @VV{\pi^*}V \\
0 @>>> \pi_*(\rom{Ker}\,\dbar_{\tx})
  @>>> \pi_*(\scr{L}^{n,0}_{\pi^*L,\pi^*h})
  @>{\pi_*\dbar_{\tx}}>>
     \pi_*(\scr{L}^{n,1}_{\pi^*L,\pi^*h})
  @>{\pi_*\dbar_{\tx}}>>
     \pi_*(\scr{L}^{n,2}_{\pi^*L,\pi^*h})
  @>{\pi_*\dbar_{\tx}}>> 
\end{CD}
\]

    Here, it is already known that $\rom{Ker}\,\dbar_{\tx}\cong K_{\tx}\otimes\pi^*L\otimes\scr{I}(\pi^*h)$ (see \cite{Dem12}, \cite[Theorem 5.3]{Wat25a}), 
    and together with the functoriality property \cite[Proposition 5.8]{Dem12} this implies $\pi_*(\rom{Ker}\,\dbar_{\tx})=\pi_*(K_{\tx}\otimes \pi^*L\otimes\scr{I}(\pi^*h))=\ogr(h)\otimes\cal{O}_X(L)$; in particular, the desired isomorphism
    \begin{align*}
        \rom{Ker}\,\dbar_X\cong\pi_*(\rom{Ker}\,\dbar_{\tx})=\ogr(h)\otimes\cal{O}_X(L)
    \end{align*}
    is obtained.
    In fact, since the isomorphism $\scr{L}^{2,loc}_{n,0}(L,h)\cong\pi_*(\scr{L}^{2,loc}_{n,0}(\pi^*L,\pi^*h))$ holds and the $\dbar$-equation extends over the exceptional set (see \cite[Lemma 6.9]{Dem82}), the left vertical arrow in this commutative diagram is an isomorphism.
\end{proof}

Furthermore, the same isomorphism $\ogr(h)\otimes\cal{O}_X(E\otimes L)\cong\rom{Ker}\,\dbar\subset\scr{L}^{n,0}_{E\otimes L,h_E\otimes h}$ holds after adding a smooth Hermitian metric $h_E$ on $E$. 
By Proposition \ref{Proposition: Ker dbar}, if any local weight of $h$ is uniformly bounded from above almost everywhere, then $\ogr(h)\otimes\cal{O}_X(L)$ coincides with the well-known multiplier $S$-sheaf $S_X(L,h)$ (see \cite[Definition 4.1]{SZ24}).

\begin{proof}[Proof of Theorem \ref{Theorem: L2-Dolbeault isomorphism}]
    The $L^2$-Dolbeault complex $(\scr{L}^{p,\ast}_{\pi^*E\otimes\pi^*L,\pi^*h_E\otimes\pi^*h},\dbar_{\tx})$ on $\tx$ is a fine resolusion of $\Omega^p_{\tx}\otimes\cal{O}_{\tx}(\pi^*E\otimes\pi^*L)\otimes\scr{I}(\pi^*h)$ for any integer $p\geq0$; that is, the sequence of sheaves
    \begin{align*}
        0\longrightarrow\Omega^p_{\tx}\otimes\cal{O}_{\tx}(\pi^*E\otimes\pi^*L)\otimes\scr{I}(\pi^*h)\longrightarrow\scr{L}^{p,\ast}_{\pi^*E\otimes\pi^*L,\pi^*h_E\otimes\pi^*h}
    \end{align*}
    is exact (see \cite[Theorem 5.3]{Wat25a}). 
    In particular, in the case $p=n$, the $L^2$-Dolbeault complex $(\scr{L}^{n,\ast}_{\pi^*E\otimes\pi^*L,\pi^*h_E\otimes\pi^*h},\dbar_{\tx})$ is also exact; hence, by using the Leray spectral sequence and the vanishing of higher direct images  
    \begin{align*}
        R^q\pi_*(K_{\tx}\otimes\pi^*E\otimes\pi^*L\otimes\scr{I}(\pi^*h))=0
    \end{align*}
    for any $q>0$ (see \cite[Corollary 1.2]{Fuj13}), 
    the lower line of the commutative diagram obtained by after tensoring with $(E,h_E)$ is a fine resolution of $\pi_*(K_{\tx}\otimes\pi^*E\otimes\pi^*L\otimes\scr{I}(\pi^*h))=\ogr(h)\otimes\cal{O}_X(E\otimes L)$. 
    Note that the direct image of a fine sheaf under $\pi_*$ is again a fine sheaf. 
\end{proof}

\begin{remark}\label{Remark: Thms condition of pi add blow ups}
    In Theorem \ref{Theorem: L2-Dolbeault isomorphism}, it suffices for $\pi:\tx\longrightarrow X$ to be a composition of a resolution of singularities and finitely many local blow-ups, and the statements remain valid even if additional blow-ups of certain analytic subsets are composed.
    In particular, in the case where $X$ is smooth, Theorem \ref{Theorem: L2-Dolbeault isomorphism} also holds when $\pi:\tx\longrightarrow X$ is taken as the blow-up of some analytic subset.
\end{remark}

Finally, we show that the upper line of the commutative diagram above is also exact; this is precisely the proof of Theorem \ref{Theorem: fine Dolbeault resolution}.
Let $(M,\omega)$ be Hermitian complex manifold, $F\longrightarrow M$ be a holomorphic vector bundle equipped with a singular Hermitian metric $h_F$. 
Let $H^{p,q}_{L^2}(M,F;\omega,h_F)_{max}$ (resp. $H^{p,q}_{L^2}(M,F;\omega,h_F)_{min}$) be the $L^2$-Dolbeault cohomology on $M$ with respect to the maximal closed extension $\dbar_{max}$ (resp. the minimal closed extension $\dbar_{min}$).  
Here, $\dbar_{max}$ is the $\dbar$-operator in the sense of distributions on $M$, i.e., $\dbar$ or $\dbar_w$, and $\dbar_{min}$ is the the minimal closed Hilbert space extension of the $\dbar$-operator on smooth forms with compact support, i.e., $\dbar_s$.

\begin{proof}[Proof of Theorem \ref{Theorem: fine Dolbeault resolution}]
    By Proposition \ref{Proposition: Ker dbar}, the Dolbeault complex $(\scr{L}^{n,\ast}_{E\otimes L,h_E\otimes h},\dbar)$ is exact in degree $q=0$, and it is also known to be exact in degrees $q\geq1$ when restricted to the regular locus $\reg$ (see \cite[Theorem\,5.3]{Wat25a}). 
    Hence, it suffices to show that it is exact in degrees $q\geq1$ along the singular locus $\xs$. 
    
    For any fixed singular point $x_0\in\xs$, take a relatively compact Stein open neighborhood $U$ of $x_0$ which admits a holomorphic closed embedding $\iota_U:U\hookrightarrow\bb{C}^N$ and trivializations $\tau:L|_U\overset{\simeq}{\longrightarrow} U\times\bb{C}$ and $\rho:E|_U\overset{\simeq}{\longrightarrow} U\times\bb{C}^{\otimes\rom{rank}\,E}$.
    We can choose a smooth Hermitian metric $h_{E_U}$ on $E|_U$ such that it is Nakano semi-positive on $\ureg$ and $h_E\sim h_{E_U}$. In particular, $h_{E_U}$ can be chosen to be Nakano flat.
    Here $(z_1,\ldots,z_N)$ denote the standard coordinates on $\bb{C}^N$, and taking $\psi:=\iota^*_U|z|^2=\iota^*_U\sum^N_{j=1}|z_j|^2$ as a smooth strictly plurisubharmonic function on $U$, the space $U$ carries a \kah metric $\omega:=\idd\psi=\iota_U^*\idd|z|^2$. 
    Let $\varphi$ be a weight function of $h$ with respect to trivialization $\tau$, then $\varphi$ is quasi-plurisubharmonic by the assumption. 
    Since $U$ is relatively compact, there exists a constant $C>0$ such that $\iO{L,he^{-C\psi}}=\iO{L,h}+C\idd\psi=\iO{L,h}+C\omega\geq\omega$ on $U$ in the sense of currents, and we obtain $L^{2,loc}_{p,q}(U,E\otimes L,h_E\otimes h)=L^{2,loc}_{p,q}(U,E\otimes L,h_{E_U}\otimes he^{-C\psi})$; in particular, $\scr{L}^{p,q}_{E\otimes L,h_E\otimes h}(U)=\scr{L}^{p,q}_{E\otimes L,h_{E_U}\otimes he^{-C\psi}}(U)$. 
    By $L^2$-existence Theorem \ref{Theorem: global L2-estimates}, for any $\dbar$-closed $f\in L^2_{n,q}(U,E\otimes L;\omega,h_{E_U}\otimes he^{-C\psi})=L^2_{n,q}(U,E\otimes L;\omega,h_E\otimes h)$, there exists a solution $u\in L^2_{n,q-1}(U,E\otimes L;\omega,h_{E_U}\otimes he^{-C\psi})$ satisfying $\dbar u=f$ on $U_{reg}$ and the $L^2$-estimate $q||u||^2_{h_{E_U}\otimes he^{-C\psi},\omega}\leq||f||^2_{h_{E_U}\otimes he^{-C\psi},\omega}<+\infty$; 
    in particular, $u\in L^2_{n,q-1}(U,E\otimes L;\omega,h_E\otimes h)$. 
    Therefore, we obtain the $L^2$-cohomology vanishing 
    \begin{align*}
        H^{n,q}_{L^2}(\ureg,E\otimes L;\omega,h_E\otimes h)_{max}=0
    \end{align*}
    for any $q\geq1$. 
    This implies exactness in degrees $q\geq1$, and the proof is complete.
\end{proof}

\section{Nadel vanishing theorem on weakly pseudoconvex complex spaces}

In this section, we provide Nadel-type vanishing theorems on weakly pseudoconvex complex spaces, taking care to account for the existence of a \kah metric.

\subsection{Compact case}

Note that in general $L^2_{p,q}(X,E\otimes L;\omega,h_E\otimes h)\subsetneq L^{2,loc}_{p,q}(X,E\otimes L,h_E\otimes h)$, and these spaces coincide when $X$ is compact.
Hence, the following immediately follows from Theorem \ref{Theorem: L2-Dolbeault isomorphism}.

\begin{theorem}\label{Theorem: isom of cohomology in compact spaces}
    Let $X$ be a compact complex space of pure dimension $n$ and $\pi:\tx\longrightarrow X$ be any resolusion of singularities. 
    If any local weight function of $h$ on $X$ is quasi-plurisubharmonic, then the pullback of forms under $\pi$ induces a natural isomorphism 
    \begin{align*}
        \pi^*:H^{n,q}_{L^2}(\reg,E\otimes L;\omega,h_E\otimes h)_{max}\overset{\cong}{\longrightarrow} H^q(\tx,K_{\tx}\otimes\pi^*E\otimes\pi^*L\otimes\scr{I}(\pi^*h)).
    \end{align*}
    For any Hermitian metric $\omega$ on $X$ and any $q\geq0$, where we have 
    \begin{align*}
        H^{n,q}_{L^2}(\reg,E\otimes L;\omega,h_E\otimes h)_{max}=H^q(X,\ogr(h)\otimes\cal{O}_X(E\otimes L)). 
    \end{align*}
\end{theorem}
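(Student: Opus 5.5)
Since $X$ is compact, every relatively compact open set is just an open subset of a compact space. Hence the space $L^{2,loc}_{n,q}(X,E\otimes L,h_E\otimes h)$ coincides with the global space $L^2_{n,q}(\reg,E\otimes L;\omega,h_E\otimes h)$ for any Hermitian metric $\omega$ on $X$. In particular, $\Gamma(X,\scr{L}^{n,q}_{E\otimes L,h_E\otimes h})$ is exactly the domain of $\dbar_{max}$ on $L^2_{n,q}(\reg,E\otimes L;\omega,h_E\otimes h)$. It follows that $H^q(\Gamma(X,\scr{L}^{n,\ast}_{E\otimes L,h_E\otimes h}))=H^{n,q}_{L^2}(\reg,E\otimes L;\omega,h_E\otimes h)_{max}$. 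This holds for every $q\geq0$; for $q=0$ both sides are the $\dbar$-closed global sections, which by Proposition \ref{Proposition: Ker dbar} (tensored with $(E,h_E)$) are $\Gamma(X,\ogr(h)\otimes\cal{O}_X(E\otimes L))$. Combined with the fine resolution of Theorem \ref{Theorem: fine Dolbeault resolution}, this yields the second displayed identity. Note that the independence of $\omega$ is automatic, since any two Hermitian metrics on compact $X$ are mutually bounded.

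For the first isomorphism, I would use the morphism of complexes $\pi^*:(\scr{L}^{n,\ast}_{E\otimes L,h_E\otimes h},\dbar)\to(\pi_*\scr{L}^{n,\ast}_{\pi^*E\otimes\pi^*L,\pi^*h_E\otimes\pi^*h},\pi_*\dbar)$ constructed via $(\ast)$ and the $\dbar$-extension lemma in the proof of Proposition \ref{Proposition: Ker dbar}. Both complexes are fine resolutions of the same sheaf $\ogr(h)\otimes\cal{O}_X(E\otimes L)$: the first by Theorem \ref{Theorem: fine Dolbeault resolution}, the second by Theorem \ref{Theorem: L2-Dolbeault isomorphism}. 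Moreover, $\pi^*$ restricts to the identity on this sheaf, which is the left vertical isomorphism in the diagram. A morphism of acyclic resolutions inducing the identity on the resolved sheaf induces an isomorphism on the cohomology of global sections. Taking global sections on compact $X$, the right-hand side becomes $H^q(\Gamma(\tx,\scr{L}^{n,\ast}_{\pi^*E\otimes\pi^*L,\pi^*h_E\otimes\pi^*h}))$. This equals $H^q(\tx,K_{\tx}\otimes\pi^*E\otimes\pi^*L\otimes\scr{I}(\pi^*h))$ by the fine resolution \cite[Theorem 5.3]{Wat25a} on the manifold $\tx$. Hence $\pi^*$ itself realizes the isomorphism, and it is natural.

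The main obstacle is that the statement allows \emph{any} resolution $\pi$, whereas Theorem \ref{Theorem: L2-Dolbeault isomorphism} is phrased for the canonical desingularization. I would handle this by noting which ingredients the proof actually uses: properness, biholomorphicity off $\exc$, the vanishing $R^q\pi_*(K_{\tx}\otimes\pi^*E\otimes\pi^*L\otimes\scr{I}(\pi^*h))=0$ from \cite{Fuj13}, and the functoriality $\pi_*(K_{\tx}\otimes\scr{I}(\pi^*\varphi))=\ogr(\varphi)$. All of these hold for an arbitrary resolution, in the spirit of Remark \ref{Remark: Thms condition of pi add blow ups}. The continuity of $(\ast)$ is also fine, since $X$ is compact, so $\tx$ is compact and the comparison $\pi^*\omega\lesssim\gamma$ is global. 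Alternatively, one can dominate any two resolutions by a common one and use that the target cohomology is independent of the resolution by Leray plus the same vanishing. Finally, I would check that the quasi-psh hypothesis is what makes $\pi^*\varphi$ quasi-psh on $\tx$, so that both \cite{Fuj13} and the multiplier-ideal resolution apply.
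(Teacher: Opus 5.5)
Your proposal is correct and follows the paper's route. The paper observes that $L^2_{n,q}(X,E\otimes L;\omega,h_E\otimes h)=L^{2,loc}_{n,q}(X,E\otimes L,h_E\otimes h)$ when $X$ is compact, so the global sections of $\scr{L}^{n,\ast}_{E\otimes L,h_E\otimes h}$ are exactly the $\dbar_{max}$-complex, and then invokes Theorem \ref{Theorem: L2-Dolbeault isomorphism}; your argument spells out what that one line leaves implicit: exactness of the upper row via Theorem \ref{Theorem: fine Dolbeault resolution}, the comparison of the two resolutions through the chain map $\pi^*$, and the extension from the canonical desingularization to an arbitrary resolution in the spirit of Remark \ref{Remark: Thms condition of pi add blow ups}.
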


Here, it is known that, for a Hermitian manifold $(M,\omega)$, the equality $\dbar_{max}=\dbar_{min}$ holds, i.e., the density of $\scr{D}^{\ast,\ast}_M$ in $\rom{Dom}\,\dbar_{max}$ with respect to the graph norm, if $\omega$ is complete (see \cite{AV65}, \cite[Chapter VIII]{Dem-book}), 
and that, if the complex space $X$ is compact, then its regular locus $\reg$ admits a complete Hermitian metric (see \cite{Dem82}). 

\begin{remark}
    The Hermitian metric $\omega$ in Theorem \ref{Theorem: isom of cohomology in compact spaces} is required to be defined on $X$. 
    Even though $\reg$ admits a complete Hermitian metric, it is not clear whether we can choose an appropriate $\omega$ on $X$ such that the left-hand side of the isomorphism induced by $\pi^*$ coincides with $H^{n,q}_{L^2}(\reg,E\otimes L;\omega,h_E\otimes h)_{min}$.
\end{remark}

Under the assumption of (relative) compactness, we prove that the vanishing of higher cohomology can be obtained without the existence of a \kah metric. 

\begin{proof}[Proof of Theorem \ref{Theorem: Nadel vanishing on Moishezon}]
    When $X$ is smooth, instead of taking a resolution of singularities, we consider blow ups $\pi:\tx\longrightarrow X$ along the analytic subspace corresponding to the singularity set $Z$ of an approximating metric $h_{\nu_0}$ obtained from the refined Demailly approximation (see \cite[Theorem 3.2]{Wat24}) preserving the multiplier ideal sheaf and producing algebraic singularities for the singular positive Hermitian metric $h$ on $L$.
    Since the approximation has algebraic singularities, it becomes possible to offset the singularities of the suitably pulled-back metric $\pi^*h_{\nu_0}$. Subsequently, applying the Negativity Lemma (see \cite[Lemma 2.2]{Wat25b}), one obtains a positive line bundle $\cal{L}\longrightarrow\tx$ constructed from $(\pi^*L,\pi^*h_{\nu_0})$ (see \cite[Theorem 3.5]{Wat24}).
    Consequently, $\tx$ is projective and admits a \kah metric. 
    (In particular, this approach is useful in that it yields projectivity without relying on Moishezon's result (= Theorem \ref{Theorem: Moishezon-ness}) and can be applied to relatively compact spaces, as required in Theorem \ref{Theorem: in subsection Nadel vanishing on w.p.c. cpx sp}.)
    
    As in the proof of Theorem \ref{Theorem: characterization of big and singular positive}, applying \cite[Lemma 3.2]{Wat25b}, there exists a quasi-plurisubharmonic function $\psi:\tx\longrightarrow[-\infty,+\infty)$ which is smooth on $\tx\setminus\pi^{-1}(Z)$ and a sufficiently small number $\varepsilon_X>0$ such that the singular Hermitian metric on $\pi^*L$ defined by $\pi^*he^{-\varepsilon\psi}$ is singular positive on $\tx$ for any $0<\varepsilon<\varepsilon_X$.
    By strong openness property (= Theorem \ref{Theorem: strong openness property}), we obtain $\scr{I}(\pi^*h)=\bigcup_{0<\varepsilon<\varepsilon_X}\scr{I}(\pi^*he^{-\varepsilon\psi})$ on $\tx$. 
    Therefore, by the compact-ness of $\tx$ and the strong Noetherian property of coherent sheaves (see \cite[Chapter II, (3.22)]{Dem-book}), there exists $0<\varepsilon_0<\varepsilon_X$ such that $\bigcup_{0<\varepsilon<\varepsilon_X}\scr{I}(\pi^*he^{-\varepsilon\psi})=\scr{I}(\pi^*he^{-\varepsilon_0\psi})$ on $\tx$. 
    Setting $\tl{h}:=\pi^*he^{-\varepsilon_0\psi}$ to be the singular Hermitian metric on $\pi^*L$, then $\tl{h}$ is singular positive on $\tx$ and satisfies $\scr{I}(\pi^*h)=\scr{I}(\tl{h})$ on $\tx$.
    Hence, the proof is completed by applying the Nadel vanishing theorem (see \cite{Nad90}) together with Theorem \ref{Theorem: L2-Dolbeault isomorphism} and Remark \ref{Remark: Thms condition of pi add blow ups}. 

    When $X$ is a complex space, let $\pi:\tx\longrightarrow X$ be a resolusion of singularities, then $\tx$ is projective and $\pi^*L$ is also big by Theorem \ref{Theorem: Moishezon-ness}. 
    By Theorem \ref{Theorem: characterization of big and singular positive} and Lemma \ref{Lemma: pullback of big for singular Hermitian metric}, there exist a singular Hermitian metric $h$ on $L$ and a singular Hermitian metric $\tl{h}$ on $\pi^*L$ 
    such that $\iO{L,h}\geq\gamma$ on $\reg$ in the sense of currents for some Hermitian metric $\gamma$, the metric $\tl{h}$ is singular positive and $\scr{I}(\tl{h})=\scr{I}(\pi^*h)$ on $\tx$.
    Hence, by Theorem \ref{Theorem: L2-Dolbeault isomorphism}, the proof is reduced to the case of compact complex manifolds.

    Furthermore, the same vanishing theorem remains valid after twisting by a holomorphic vector bundle endowed with a smooth Hermitian metric that is Nakano semi-positive or Demailly $m$-semi-positive on $\reg$, 
    by applying the Nakano-Nadel type vanishing theorem (see \cite[Theorem 1.1]{Wat25a}).
\end{proof}

As is clear from the above proof, Nadel vanishing Theorem \ref{Theorem: Nadel vanishing on Moishezon} holds not only for the singular positive Hermitian metric constructed from the bigness of $L$, as in Theorem \ref{Theorem: characterization of big and singular positive}, but also for an arbitrary given singular positive Hermitian metric.
More generally, we obtain the following.


\begin{theorem}\label{Theorem: in subsection Nadel vanishing on w.p.c. cpx sp}
    Let $(X,\varPsi)$ be a weakly pseudoconvex complex space of pure dimension $n$ and $c>\inf_X\varPsi$ be arbitrary. 
    If $h$ is singular positive on $X_{c+\tau}$ for some $\tau>0$, then we have the following vanishing 
    \begin{align*}
        H^q(X_c,\ogr(h)\otimes\cal{O}_X(L))=0
    \end{align*}
    for any $q>0$, without assuming the existence of a \kah metric. 
    
    Furthermore, if a smooth Hermitian metric $h_E$ on a holomorphic vector bundle $E\longrightarrow X$ is Demailly $m$-semi-positive on $(X_c)_{reg}$, then the cohomology vanishing 
    \begin{align*}
        H^q(X_c,\ogr(h)\otimes\cal{O}_X(E\otimes L))=0
    \end{align*}
    also holds for any $q>0$ satisfying $m\geq\min\{n-q+1,\rom{rank}\,E\}$.
\end{theorem}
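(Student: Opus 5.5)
The plan is to reduce to a relatively compact manifold situation in which a Kähler metric can be manufactured, following the strategy of the proof of Theorem \ref{Theorem: Nadel vanishing on Moishezon}.

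Let $\pi:\tx\longrightarrow X$ be the canonical desingularization of Theorem \ref{Theorem: canonical desingularization}. Over the relatively compact set $X_{c+\tau}$ it is a finite composition of blow-ups with smooth centres. By Theorem \ref{Theorem: L2-Dolbeault isomorphism}, applied to $X_c$ together with Remark \ref{Remark: Thms condition of pi add blow ups}, we have
\begin{align*}
H^q(X_c,\ogr(h)\otimes\cal{O}_X(E\otimes L))\cong H^q(\pi^{-1}(X_c),K_{\tx}\otimes\pi^*E\otimes\pi^*L\otimes\scr{I}(\pi^*h)).
\end{align*}
This remains true after composing $\pi$ with further blow-ups. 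The local weights of $h$ are strictly plurisubharmonic on $X_{c+\tau}$, so they are quasi-plurisubharmonic and the theorem applies. It therefore suffices to prove vanishing on the weakly pseudoconvex manifold $\tl{X}_c:=\pi^{-1}(X_c)$, with exhaustion function $\pi^*\varPsi$.

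Next I would produce a singular positive metric on $\pi^*L$ over a neighbourhood $\pi^{-1}(X_{c+\tau/2})$ with the same multiplier ideal as $\pi^*h$. The first step is to apply \cite[Lemma 3.2]{Wat25b}, as in Theorem \ref{Theorem: characterization of big and singular positive}. Since $\iO{L,h}\geq\varepsilon\omega$ on $X_{c+\tau}$ by Proposition \ref{Proposition: curvature condition of sHm}, and $\varepsilon$ is bounded below on the compact closure of $X_{c+\tau/2}$, this gives a quasi-psh $\psi$ with analytic singularities along $\exc$ such that $\pi^*h\,e^{-\delta\psi}$ is singular positive on $\pi^{-1}(X_{c+\tau/2})$ for $0<\delta<\delta_0$. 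The second step is to use the strong openness property (Theorem \ref{Theorem: strong openness property}) together with the strong Noetherian property on the relatively compact set $\pi^{-1}(\overline{X_c})$. This yields a fixed $\delta$ with $\scr{I}(\pi^*he^{-\delta\psi})=\scr{I}(\pi^*h)$ near $\pi^{-1}(\overline{X_c})$. Call the resulting metric $\tl{h}$.

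The main obstacle is the absence of a Kähler metric, which Demailly's $L^2$-theory (Theorem \ref{Theorem: [Theorem 5.1, Dem82]}) needs when $q\geq2$. My first attempt is to build one. By the refined Demailly approximation \cite[Theorem 3.2]{Wat24}, approximate $\tl{h}$ by a metric with algebraic singularities that is still strictly positive (losing a small multiple of a Hermitian metric, absorbed by the strict positivity of $\tl{h}$) and keeps the same multiplier ideal. Blow up its singular locus further, which is allowed by the Remark, and apply the Negativity Lemma \cite[Lemma 2.2]{Wat25b} as in \cite[Theorem 3.5]{Wat24}. This should give a smooth positive line bundle on the blown-up $\pi^{-1}(X_{c+\tau/4})$, and its curvature is a Kähler metric $\widehat\omega$ there. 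The delicate point is to keep this uniform, which is why the margin $\tau$ is needed: the approximation parameters are chosen on a relatively compact neighbourhood.

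Once $\widehat\omega$ exists, the blown-up $\pi^{-1}(X_c)$ is a weakly pseudoconvex Kähler manifold carrying $\pi^*E$ with $\pi^*h_E$ Demailly $m$-semi-positive off $\exc$. By continuity this extends to semi-positivity on all of $\pi^{-1}(X_c)$, since $\pi^*\iO{E,h_E}$ is smooth and $\exc$ has measure zero. It also carries the singular positive metric $\tl{h}$. I would then apply Theorem \ref{Theorem: global L2-estimates} in the manifold case, or equivalently the Nakano--Nadel vanishing \cite[Theorem 1.1]{Wat25a}, on sublevel sets $\{\pi^*\varPsi<c'\}$ with $c'$ slightly larger than $c$. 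Combined with the $L^2$-Dolbeault resolution on the manifold, this gives $H^q=0$ for $q$ with $m\geq\min\{n-q+1,\rom{rank}\,E\}$, and for all $q>0$ when $E$ is trivial. Transporting back through the isomorphism of the first step completes the proof.
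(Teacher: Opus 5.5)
Your architecture is the paper's: resolve singularities, pass to $\tx$ via Theorem \ref{Theorem: L2-Dolbeault isomorphism} and Remark \ref{Remark: Thms condition of pi add blow ups}, twist $\pi^*h$ by $e^{-\delta\psi}$ using \cite[Lemma 3.2]{Wat25b}, and fix $\delta$ by strong openness and the strong Noetherian property on the relatively compact set. Then blow up along the algebraic singularities of a refined Demailly approximation to get a positive line bundle, hence a K\"{a}hler metric, and finish with Nadel vanishing on a weakly pseudoconvex K\"{a}hler manifold.

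There is one step that fails as written. After the additional blow-up $\mu:\widehat{X}\to\tx$ that produces $\widehat\omega$, you assert that $\widehat{X}$ ``also carries the singular positive metric $\tl{h}$.'' It does not. The pullback only satisfies $\iO{\mu^*\pi^*L,\mu^*\tl{h}}\geq\mu^*\tl{\gamma}$, and $\mu^*\tl{\gamma}$ degenerates along the $\mu$-exceptional divisor. This is exactly the defect $\pi^*h$ had along $\exc$, which was your reason for introducing $\psi$. Without a bound $\iO{}\geq\varepsilon\widehat\omega$ with $\varepsilon$ bounded below near the closure of the sublevel set, the right-hand side $\int\frac{1}{\varepsilon}|f|^2$ in Theorem \ref{Theorem: global L2-estimates} or \cite[Theorem (5.11)]{Dem12} need not be finite. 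So vanishing on the K\"{a}hler model does not follow.

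The paper handles this explicitly: ``For this blow-up, we again construct a singular Hermitian metric with singular positivity preserving the corresponding ideal sheaf in the same way.'' Concretely, set $\widehat\pi:=\pi\circ\mu$ and proceed in two steps.
\begin{enumerate}
\item Take a quasi-plurisubharmonic $\widehat\psi$ adapted to the new exceptional locus, so that $\widehat\pi^*h\,e^{-\delta\widehat\psi}$ is singular positive over $\widehat\pi^{-1}(X_{c+\tau/4})$.
\item Use strong openness and the strong Noetherian property again to choose $\delta$ with $\scr{I}(\widehat\pi^*h\,e^{-\delta\widehat\psi})=\scr{I}(\widehat\pi^*h)$ near $\widehat\pi^{-1}(\overline{X_c})$. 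This is the ideal that appears in Remark \ref{Remark: Thms condition of pi add blow ups} for $\widehat\pi$.
\end{enumerate}
Equivalently, you can use your first $\tl{h}$ only to build the K\"{a}hler model, and do the ideal-preserving twist once, on $\widehat{X}$.

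A smaller point concerns the last step. $\pi^*\varPsi$ is not an exhaustion of $\pi^{-1}(X_c)$, and vanishing on a larger sublevel set $\{\pi^*\varPsi<c'\}$ does not restrict to vanishing on $X_c$. Apply the K\"{a}hler-manifold vanishing directly on $\widehat\pi^{-1}(X_c)$, which is weakly pseudoconvex with exhaustion $-\log(c-\widehat\pi^*\varPsi)$.
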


\begin{proof}
    After taking a resolution of singularities $\pi:\tx\longrightarrow X$, we construct, as above, a singular Hermitian metric $\tl{h}$ on $\pi^*L|_{\tx_{c+\tau/2}}$ with singular positivity such that the associated ideal sheaves are preserved, namely, $\scr{I}(\pi^*h)=\scr{I}(\tl{h})$, by using the strong openness property, which can be applied due to the relative compactness of $\tl{X}_{c+\tau/2}:=\pi^{-1}(X_{c+\tau/2})$. 
    We then further blow up along the singular locus arising from Demailly's approximation and construct a positive line bundle to obtain a \kah metric. 
    For this blow-up, we again construct a singular Hermitian metric with singular positivity preserving the corresponding ideal sheaf in the same way. 
    Therefore, we reduce the problem to Nadel vanishing on a weakly pseudoconvex manifold carrying a \kah metric (see \cite[Theorem (5.11)]{Dem12}), and apply Theorem \ref{Theorem: L2-Dolbeault isomorphism} and Remark \ref{Remark: Thms condition of pi add blow ups}.
\end{proof}

\subsection{Non-compact case}

\begin{theorem}\label{Theorem: m-positive-Nadel vanishing on w.p.c. cpx sp}
    Let $X$ be a weakly pseudoconvex \kah complex space of pure dimension $n$ and $L\longrightarrow X$ be a holomorphic line bundle with a singular Hermitian metric $h$. 
    Let $E\longrightarrow X$ be a holomorphic vector bundle with a smooth Hermitian metric $h_E$.
    If $h$ is singular positive on $X$ and $h_E$ is Demailly $m$-semi-positive on $\reg$, then we obtain 
    \begin{align*}
        H^q(X,\ogr(h)\otimes\cal{O}_X(E\otimes L))=0 
    \end{align*}
    for any $q>0$ satisfying $m\geq\min\{n-q+1,\rom{rank}\,E\}$. 
    Furthermore, if $h_E$ is Nakano semi-positive on $\reg$ and $X$ is not necessarily K\"{a}hler, then we have only the first cohomology vanishing $H^1(X,\ogr(h)\otimes\cal{O}_X(E\otimes L))=0$.
\end{theorem}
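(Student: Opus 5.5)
The plan is to combine the $L^2$-Dolbeault isomorphism of Theorem \ref{Theorem: fine Dolbeault resolution} with the global $L^2$-estimate of Theorem \ref{Theorem: global L2-estimates}. Since $h$ is singular positive, every local weight of $h$ coincides a.e. with a (strictly) plurisubharmonic function, so in particular it is quasi-plurisubharmonic. Theorem \ref{Theorem: fine Dolbeault resolution} therefore applies, giving
\begin{align*}
    H^q(X,\ogr(h)\otimes\cal{O}_X(E\otimes L))\cong H^q(\Gamma(X,\scr{L}^{n,\ast}_{E\otimes L,h_E\otimes h})).
\end{align*}
It thus suffices to show the following: for every $f\in\Gamma(X,\scr{L}^{n,q}_{E\otimes L,h_E\otimes h})$ with $\dbar f=0$ on $\reg$, there is $u\in\Gamma(X,\scr{L}^{n,q-1}_{E\otimes L,h_E\otimes h})$ with $\dbar u=f$. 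Here $f$ is only locally $L^2$, and it need not satisfy $\int_X\frac1\varepsilon|f|^2<\infty$.

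Next we fix the curvature data. By Proposition \ref{Proposition: curvature condition of sHm}, for the K\"ahler metric $\omega$ there is a positive smooth function $\varepsilon_0$ with $\iO{L,h}\geq\varepsilon_0\omega$ on $X$. To make $f$ globally integrable, we twist $h$ by $e^{-\chi\circ\varPsi}$, where $\varPsi$ is the smooth psh exhaustion and $\chi$ is a smooth convex increasing function. Then
\begin{align*}
    \iO{L,he^{-\chi\circ\varPsi}}=\iO{L,h}+\idd\chi(\varPsi)\geq\varepsilon_0\omega,
\end{align*}
since $\idd\chi(\varPsi)=\chi'\idd\varPsi+\chi''i\partial\varPsi\wedge\dbar\varPsi\geq0$. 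On each sublevel set $X_c$, which is relatively compact, the quantity $\int_{X_c}\frac{1}{\varepsilon_0}|f|^2_{h_E\otimes h,\omega}dV_\omega$ is finite because $f$ is locally $L^2$ and $\varepsilon_0$ is bounded below on compacts. We choose $\chi$ growing fast enough that
\begin{align*}
    \int_X\frac1{\varepsilon_0}|f|^2_{h_E\otimes h,\omega}e^{-\chi(\varPsi)}dV_\omega<+\infty.
\end{align*}
We then apply Theorem \ref{Theorem: global L2-estimates} to the metric $he^{-\chi\circ\varPsi}$, with $\varepsilon=\varepsilon_0$ and $h_E$ Demailly $m$-semi-positive, in the degrees $q$ with $m\geq\min\{n-q+1,\rom{rank}\,E\}$. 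This produces $u\in L^2_{n,q-1}(X,E\otimes L;\omega,h_E\otimes he^{-\chi\circ\varPsi})$ with $\dbar u=f$ on $\reg$. Because $e^{-\chi\circ\varPsi}$ is bounded above and below on compacts, $u$ is locally $L^2$ for $h_E\otimes h$. Since $\dbar u=f$ is also locally $L^2$, we get $u\in\Gamma(X,\scr{L}^{n,q-1}_{E\otimes L,h_E\otimes h})$, and the cohomology class of $f$ vanishes.

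For the non-K\"ahler statement with $h_E$ Nakano semi-positive, we use the last part of Theorem \ref{Theorem: global L2-estimates}, which holds for $q=1$ with an arbitrary Hermitian metric $\omega$. The rest of the argument is identical, and it yields $H^1=0$.

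The main obstacle is the choice of $\chi$, i.e. the passage from local to global square integrability. We handle it by exhaustion: set $a_k:=\int_{X_{k+1}\setminus X_k}\frac1{\varepsilon_0}|f|^2$, which is finite by relative compactness, and pick $\chi$ convex increasing with $\chi\geq\log(2^k(1+a_k))$ on $[k,k+1]$. A secondary point to check is that the hypotheses of Theorem \ref{Theorem: global L2-estimates} are only required on $\reg$, so the curvature inequality obtained from Proposition \ref{Proposition: curvature condition of sHm} more than suffices.
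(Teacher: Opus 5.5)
Your proposal is correct and follows the paper's proof essentially step for step. The paper also uses the $L^2$-Dolbeault isomorphism of Theorem \ref{Theorem: fine Dolbeault resolution}, the bound $\iO{L,h}\geq\varepsilon\omega$ from Proposition \ref{Proposition: curvature condition of sHm}, a twist by $e^{-\chi\circ\varPsi}$ with $\chi$ convex increasing and growing fast enough that $\int_X\frac{1}{\varepsilon}|f|^2_{h_E\otimes h,\omega}e^{-\chi\circ\varPsi}\,dV_\omega<+\infty$, and Theorem \ref{Theorem: global L2-estimates} (its $q=1$ Hermitian case for the non-K\"ahler part); your explicit construction of $\chi$ and your check that the twisted curvature stays $\geq\varepsilon_0\omega$ just fill in details the paper leaves implicit.
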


\begin{proof}
    From Theorem \ref{Theorem: fine Dolbeault resolution}, we obtain the $L^2$-Dolbeault isomorphism 
    \begin{align*}
        H^q(X,\ogr(h)\otimes\cal{O}_X(E\otimes L))\cong H^q(\Gamma(X,\scr{L}^{n,\ast}_{E\otimes L,h_E\otimes h}))
    \end{align*}
    and we prove that the right-hand side vanishes. 
    Let $\omega$ be a \kah metric on $X$. 
    By Proposition \ref{Proposition: curvature condition of sHm}, there exists a positive smooth function $\varepsilon:X\longrightarrow\bb{R}_{>0}$ such that $\iO{L,h}\geq\varepsilon\omega$ on $X$ in the sense of currents.
    We arbitrarily take a convex increasing function $\chi\in\cal{C}^{\infty}(\bb{R},\bb{R})$. 
    For any $\dbar$-closed $(n,q)$-form $f\in \Gamma(X,\scr{L}^{n,q}_{E\otimes L,h_E\otimes h})$, the integral
    \begin{align*}
        \int_X\frac{1}{\varepsilon}|f|^2_{h_E\otimes h,\omega}e^{-\chi\circ\varPsi}\dvo \tag{$\int$}
    \end{align*}
    become convergent if $\chi$ grows fast enough. By the global $L^2$-estimates Theorem \ref{Theorem: global L2-estimates}, 
    there exists $u\in L^2_{n,q-1}(X,E\otimes L;\omega,h_E\otimes he^{-\chi\circ\varPsi})$ satisfying $\dbar u=f$ on $\reg$ and 
    \begin{align*}
        \int_X|u|^2_{h_E\otimes h,\omega}e^{-\chi\circ\varPsi}\dvo\leq\int_X\frac{1}{\varepsilon}|f|^2_{h_E\otimes h,\omega}e^{-\chi\circ\varPsi}\dvo<+\infty. 
    \end{align*}
    By the smoothness of $\chi\circ\varPsi$, it follows that $u\in\Gamma(X,\scr{L}^{n,q-1}_{E\otimes L,h_E\otimes h})$, and hence the desired cohomology vanishing $H^q(X,\ogr(h)\otimes\cal{O}_X(E\otimes L))=0$ is obtained.
    
    The first cohomology vanishing can similarly be obtained using Theorem \ref{Theorem: global L2-estimates}.
\end{proof}

As a remark, although the $L^2$-existence Theorem \ref{Theorem: global L2-estimates} follows from the singular positivity of $h$ only on $\reg$, the finiteness of the integral ($\int$) requires the singular positivity of $h$ on $X$, i.e., $\varepsilon>0$ on $X$.
Theorem \ref{Theorem: Nadel vanishing on w.p.c. cpx sp} follows immediately from Theorem \ref{Theorem: m-positive-Nadel vanishing on w.p.c. cpx sp}.

\begin{remark}
    Even if we attempt to reduce the problem to the known Nadel vanishing on a weakly pseudoconvex manifold $\tx$ via Theorem \ref{Theorem: L2-Dolbeault isomorphism}, difficulties remain: 
    it is unclear whether the pullback $\pi^*L$ admits a singular positive Hermitian metric on the whole of $\tx$, and it is also unclear whether the required global $L^2$-estimates can be obtained with a single fixed singular Hermitian metric.
\end{remark}

Indeed, when $h$ is singular positive on $X$, there exist a smooth positive function $\varepsilon:X\longrightarrow\bb{R}_{>0}$ and a \kah metric $\omega$ such that $\iO{L,h}\geq\varepsilon\omega$ on $X$ in the sense of currents (see Proposition \ref{Proposition: curvature condition of sHm}). 
However, after pulling back, we only obtain $\iO{\pi^*L,\pi^*h}\geq\pi^*(\varepsilon\omega)$ on $\tx$ in the sense of currents and the positivity degenerates along $\exc$. 
Thus, for a given $f\in L^{2,loc}_{n,q}(\tx,\pi^*L,\pi^*h)$, even after choosing an appropriate convex increasing function $\chi\in\cal{C}^{\infty}(\bb{R},\bb{R})$, 
it is not clear whether the integral $\displaystyle\int_{\tx}\frac{1}{\pi^*\omega}|f|^2_{\pi^*h,\pi^*\omega}e^{-\chi\circ\pi^*\varPsi}dV_{\pi^*\omega}$ is finite. 
Furthermore, although $\pi^*\omega$ is \kah on $\tx\setminus\exc$, it is not \kah on $\tx$, and it is unclear whether $\tx\setminus\exc$ admits a complete \kah metric (see Remark \ref{Remark: complete Kahler} and Conjecture \ref{Conjecture: no existence of complete Kahler}).
Hence, it is unclear whether the global $L^2$-estimates can be applied appropriately, and the existence of global solutions to the $\dbar$-equation is also unclear.

We immediately obtain the following Kodaira vanishing theorem on weakly pseudoconvex complex spaces.
In the compact case, Kodaira vanishing is known to hold for the dualizing sheaf $\omega_X$ on projective semi log canonical varieties (see \cite[Theorem 1.8]{Fuj14}), 
and Corollary \ref{Corollary: Kodaira vanishing on w.p.c} may be viewed as a generalization to certain non-compact settings.

\begin{corollary}\label{Corollary: Kodaira vanishing on w.p.c}
    Let $X$ be a weakly pseudoconvex complex space of pure dimension and $L\longrightarrow X$ be a holomorphic line bundle. 
    If $L$ is positive, then we have the following 
    \begin{align*}
        H^q(X,\ogr\otimes\cal{O}_X(L))=0
    \end{align*}
    for any $q>0$.
\end{corollary}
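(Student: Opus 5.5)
The plan is to deduce the corollary from the Nadel-type vanishing already proved, using the trivial twisting bundle $E=\cal{O}_X$ with its flat metric. Since $L$ is positive, there is a smooth Hermitian metric $h$ on $L$ whose local weights are smooth strictly plurisubharmonic functions. In particular, $h$ is singular positive in the sense of Definition \ref{Definition: singular positivity on cpx sp}, and its local weights are quasi-plurisubharmonic. Moreover, $h$ is locally bounded, so $e^{-2\varphi}$ is locally bounded above and below. Hence $\ogr(h)=\ogr(\varphi)=\ogr$, and therefore $\ogr(h)\otimes\cal{O}_X(L)=\ogr\otimes\cal{O}_X(L)$.

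It then remains to get the vanishing of $H^q(X,\ogr(h)\otimes\cal{O}_X(L))$ for all $q>0$. Theorem \ref{Theorem: m-positive-Nadel vanishing on w.p.c. cpx sp} cannot be quoted directly, since it assumes $X$ is K\"ahler, and without that it yields only the case $q=1$. The first step is therefore to produce a K\"ahler metric from the positivity of $L$. The curvature form $\omega:=\iO{L,h}$ is locally given by $\idd\varphi_\alpha$ with $\varphi_\alpha$ smooth strictly plurisubharmonic. Each such $\varphi_\alpha$ is locally the restriction of a smooth strictly plurisubharmonic function $\tl{\varphi}_\alpha$ under a local embedding, so $\omega$ is locally the restriction of the K\"ahler form $\idd\tl{\varphi}_\alpha$. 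Thus $\omega$ is a K\"ahler metric on $X$, and $X$ is a weakly pseudoconvex K\"ahler space.

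With this in hand, I apply Theorem \ref{Theorem: m-positive-Nadel vanishing on w.p.c. cpx sp}, or equivalently Theorem \ref{Theorem: Nadel vanishing on w.p.c. cpx sp}, with $E=\cal{O}_X$ and $h_E$ trivial. Then $h_E$ is flat, hence Nakano semi-positive on $\reg$, and the condition $m\geq\min\{n-q+1,1\}$ holds for every $q>0$. This gives $H^q(X,\ogr\otimes\cal{O}_X(L))=0$ for all $q>0$.

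The step I expect to need the most care is checking that $\iO{L,h}$ is a K\"ahler metric in the paper's sense on a singular space, namely that it is locally the restriction of an ambient K\"ahler form under embeddings. A second check is that the local ambient extensions can be taken consistently on overlaps. This does follow from the definition of positivity, which asks for local ambient strictly psh extensions of the weights.
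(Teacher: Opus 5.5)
Your proposal is correct and is essentially the argument the paper has in mind when it says the corollary is immediate: a smooth positive metric $h$ is singular positive with $\ogr(h)=\ogr$, and its curvature supplies the K\"ahler metric needed to invoke Theorem \ref{Theorem: m-positive-Nadel vanishing on w.p.c. cpx sp} (equivalently Theorem \ref{Theorem: Nadel vanishing on w.p.c. cpx sp}) with $E=\cal{O}_X$ trivial. Your worry about overlaps is harmless: the local potentials $-\log h_\alpha$ differ by the pluriharmonic functions $\log|f_{\alpha\beta}|^2$, so the local K\"ahler forms glue.
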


As a related result, in the (resp. relatively) compact case, there is Serre-type vanishing on each sublevel set (see \cite[Chapter V, Theorem 4.3]{GPR94}, resp. \cite[Theorem N']{Fuj75}).


\vspace*{5mm}
\noindent
{\bf Acknowledgement.} 
The author would like to thank Professor Henri Guenancia for his valuable comments on local potentials of currents.
The author is supported by Grant-in-Aid for Research Activity Start-up $\sharp$24K22837 and Grant-in-Aid for Early-Career Scientists $\sharp$26K16989 from the Japan Society for the Promotion of Science (JSPS).



\end{document}